\definecolor{red}{rgb}{1,0,0}
\definecolor{red}{rgb}{1,0,0}
\definecolor{cyan}{rgb}{0.05,.4,.7}
\def\ord#1{| #1 |}
\newcommand{\R}{\mathbb{R}}
\newcommand{\N}{\mathbb{N}}
\newcommand{\tw}{\operatorname{tw}}
\newcommand{\h}{\eta}
\newcommand{\G}{\mathcal{G}}
\newcommand{\sRn}{S_n(\R)}
\newcommand{\sym}{\mathcal{S}}
\newcommand{\SG}{\sym(G)}
\newcommand{\bone}{\ensuremath{\mathds{1}}} 
\newcommand{\bxi}{{\vec \xi}}
\newcommand{\Zp}{\operatorname{Z}_+}
\newcommand{\Zl}{\operatorname{Z}_\ell}
\newcommand{\ppw}{\operatorname{ppw}}
\newcommand{\pw}{\operatorname{pw}}
\newcommand{\la}{\operatorname{la}}
\newcommand{\trt}{\operatorname{trt}}
\newcommand{\Zmm}{\lfloor \operatorname{Z}\rfloor}
\newcommand{\CCRZlmm}{\operatorname{CCR-}\Zlmm}
\newcommand{\CCRZpmm}{\operatorname{CCR-}\Zpmm}
\newcommand{\Zpmm}{\lfloor \Zp\rfloor}
\newcommand{\Zlmm}{\lfloor \Zl\rfloor}
\newcommand{\Gc}{\overline{G}}
\newcommand{\lc}{\left\lceil}
\newcommand{\rc}{\right\rceil}
\newcommand{\lf}{\left\lfloor}
\newcommand{\rf}{\right\rfloor}
\def\ngsu#1{#1}
\def\ngsl#1{\underline{#1}}
\def\ngpu#1{#1^\times}
\def\ngpl#1{\underline{#1}^\times}
\def\ngsun#1{#1_{\rm nd}}
\def\ngsln#1{\underline{#1}_{\rm nd}}
\def\ngpun#1{#1^\times_{\rm nd}}
\def\ngpln#1{\underline{#1}^\times_{\rm nd}}
\newcommand{\bit}{\begin{itemize}}
\newcommand{\eit}{\end{itemize}}
\newcommand{\ben}{\begin{enumerate}}
\newcommand{\een}{\end{enumerate}}
\newcommand{\beq}{\begin{equation}}
\newcommand{\eeq}{\end{equation}}
\newcommand{\bea}{\begin{eqnarray*}}
\newcommand{\eea}{\end{eqnarray*}}
\newcommand{\bpf}{\begin{proof}}
\newcommand{\epf}{\end{proof}\ms}
\newcommand{\ms}{\medskip}
\newtheorem{thm}{Theorem}[section]
\newtheorem{cor}[thm]{Corollary}
\newtheorem{lem}[thm]{Lemma}
\newtheorem{conj}[thm]{Conjecture}
\newtheorem{definition}[thm]{Definition}
\newtheorem{observation}[thm]{Observation}
\newtheorem{remark}[thm]{Remark}
\newtheorem{example}[thm]{Example}
\newtheorem{question}[thm]{Question}
\newenvironment{obs}{\begin{observation}\bgroup\rm }{\egroup\end{observation}}
\newenvironment{rem}{\begin{remark}\bgroup\rm }{\egroup\end{remark}}
\title{Multi-part Nordhaus-Gaddum type problems for tree-width, Colin de Verdi\`ere type parameters, and Hadwiger number}
\author{ 
Leslie~Hogben\thanks{Department of Mathematics, Iowa State
University, Ames, IA 50011, USA (hogben@iastate.edu, chlin@iastate.edu, myoung@iastate.edu).} \thanks{American Institute of Mathematics, 600 E. Brokaw Road, 
San Jose, CA 95112 (hogben@aimath.org).}
\and Jephian C.-H.~Lin\footnotemark[1]\and Michael Young\footnotemark[1]}
\begin{document}

\maketitle


\vspace{-10pt}
\begin{abstract} 
A traditional Nordhaus-Gaddum problem for a graph parameter $\beta$ is to find a (tight) upper or lower bound on the sum or product of $\beta(G)$ and $\beta(\Gc)$ (where $\Gc$ denotes the complement of $G$). An $r$-decomposition $G_1,\dots,G_r$ of the complete graph $K_n$  is a partition of the
edges of $K_n$ among $r$ spanning subgraphs $G_1,\dots,G_r$. A traditional Nordhaus-Gaddum problem can be viewed as the special case for $r=2$ of a more general $r$-part sum or product Nordhaus-Gaddum type problem.  We determine the values of the $r$-part sum and product  upper bounds asymptotically as $n$ goes to infinity for the parameters tree-width and its variants largeur d'arborescence, path-width, and proper path-width.  We also establish ranges for the lower bounds for these parameters, and ranges for the upper and lower bounds of the $r$-part Nordhaus-Gaddum type problems for the parameters Hadwiger number, the Colin de Verdi\`ere number $\mu$ that is used to characterize planarity, and its variants $\nu$ and $\xi$.

\end{abstract}

\noindent\textbf{Keywords.}   Nordhaus-Gaddum, multi-part, Hadwiger number, tree-width, largeur d'arborescence, path-width,  proper path-width, Colin de Verdi\`ere type parameter.

\noindent\textbf{AMS subject classifications.} 05C35, 05C83, 05C69, 05C50  

\section{Introduction}\label{sintro}

An {\em $r$-decomposition} of $K_n=([n],E)$ is a partition of the edges as the edge sets of $r$ spanning subgraphs $G_i=([n],E_i)$  
for $i=1,\dots,r$.
An $r$-part Nordhaus-Gaddum  problem for a graph parameter $\beta$ is to find a (tight) upper or lower bound on the sum
 \[  \beta(G_1)+\dots+\beta(G_r) \]
 or on the product
  \[  \beta(G_1)\cdots\beta(G_r). \]
It is common to exclude from these bounds graphs of  order $\le n_0$ for a fixed $n_0$,  
or to ask that the bound be approached for arbitrarily large $n$. 

The study of Nordhaus-Gaddum type problems for more than two parts was initiated by F\"uredi et al.\! in \cite{multiNG}, and we follow the definitions 
of that paper. Let $\beta$ be a nonnegative integer valued graph parameter, and let $r$ and $n$ be positive integers.  Define the Nordhaus-Gaddum sum upper bound and sum lower bound:
 \[ \ngsu{\beta}(r;n):=\max \left\{\beta(G_1)+\dots+\beta(G_r) \right\}\]  where the maximum is taken over all $r$-decompositions of $K_n$;
  \[ \ngsl{\beta}(r;n):=\min \left\{\beta(G_1)+\dots+\beta(G_r)\right\} \]  where the minimum is taken over all $r$-decompositions of $K_n$ (in \cite{multiNG} this lower value $\ngsl{\beta}$ is denoted by $\overline \beta$, but we believe a lower line is more mnemonic for a lower value).  

Define the Nordhaus-Gaddum product  upper bound and product lower bound:   
\[\ngpu{\beta}(r;n)=\max \left\{\beta(G_1) \cdots \beta(G_r) \right\}\]  where the maximum is taken over all  
$r$-decompositions of $K_n$;
\[\ngpl{\beta}(r;n)=\min \left\{\beta(G_1) \cdots  \beta(G_r) \right\}\]  where the minimum is taken over all  
$r$-decompositions of $K_n$.  For the product lower bound,  because many of the parameters take on the value zero on an edgeless  graph, we  focus on the non-degenerate bound $\ngpln{\beta}(r;n)$, in which every graph in a decomposition must have an edge (see Section \ref{ssNGdefs}).

Since $\ngsu{\beta}(1;n)=\beta(K_n)$ and similarly for the other bounds defined, we study decompositions with $r\ge 2$.  There is a rich literature on $2$-part Nordhaus-Gaddum type problems  (see \cite{NGsurvey} for a survey). We study $r$-part Nordhaus-Gaddum type problems for the following parameters: the Hadwiger number $\h$, tree-width $\tw$ and its variants largeur d'arborescence  $\la$, path-width $\pw,$ and proper path-width $\ppw,$    and  the  Colin de Verdi\`ere type parameters $\mu,\nu, \xi$, all of which are minor monotone and have interesting relationships (see \cite{param} for a discussion of these relations).  Definitions of the parameters are given in  Section \ref{ssparamdefs}.  
Kostochka says \cite[p. 307]{Kost84}, ``it is very important to study the Hadwiger number" due to its relation to other ideas in graph theory, including Hadwiger's famous conjecture.   
  He establishes extensive Nordhaus-Gaddum  theory ($r=2$) for $\eta$ in \cite{Kost81, Kost84, K89}. Likewise, tree-width and its variants have played a fundamental role in the theory of graph minors since their (re)introduction by Robertson and  Seymour in the early 1980s.   Surprisingly, Nordhaus-Gaddum  theory ($r=2$) has only recently been studied for tree-width and its variants, with the sum lower bound established in \cite{EGR11, JW12} and the sum upper bound established in \cite{JW12}. In addition to other uses in graph theory that motivated their introduction, Colin de Verdi\`ere type parameters have played an important role in the study of minimum rank/maximum nullity of real symmetric matrices described by a graph (see \cite{param, HLA2, H15IMA}).  The Nordhaus-Gaddum sum lower bound ($r=2$) is called the Graph Complement Conjecture in minimum rank literature (see \cite{HLA2}).

Here we state our main results. 
  We begin with the sum upper bound.

\begin{thm}\label{mainsumupper} $\null$\vspace{-4mm}
\ben[(a)]
\item\label{mainsumupper1} For a fixed $r\ge 2$ and $\beta$ one of  tree-width $\tw$ or its variants largeur d'arborescence  $\la$, path-width $\pw,$ or proper path-width $\ppw,$ \vspace{-2mm}
\[ \lim_{n\to\infty} \frac {\ngsu{\beta}(r;n)}n= r.\vspace{-2mm}\] 
\item\label{mainsumupper2} For a fixed $r\ge 2$ and  $\beta$ one of  the Hadwiger number $\h$ or the  Colin de Verdi\`ere type parameters $\mu,\nu, \xi$, $\ngsu{\beta}(r;n)=\Theta(n)$.  Specifically, for $n\ge 3$ as $n\to \infty$\vspace{-2mm}
\[\frac r {\lc \sqrt{2  r+\frac{1}{4}}-\frac{1}{2}\rc} -o(1)  \le  \frac {\ngsu{\beta}(r;n)}n\le \sqrt r+o(1).\vspace{-3mm}\] 
\een
\end{thm}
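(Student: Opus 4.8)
The proof proposal I would give splits into the two parts. For part~\eqref{mainsumupper1}, the upper bound $\ngsu{\beta}(r;n)\le rn - o(n)$ — in fact $\le r(n-1)$ — is immediate from the trivial bound $\beta(G_i)\le n-1$ that holds for all four parameters (tree-width and its variants are at most $n-1$, attained by $K_n$). The content is the matching lower bound: I must exhibit, for each fixed $r$, an $r$-decomposition of $K_n$ in which $\beta(G_i)$ is close to $n$ for every $i$ simultaneously. The natural approach is to take each $G_i$ to be (close to) a graph with a large clique or a large complete bipartite subgraph, since $\tw,\la,\pw,\ppw$ are all at least as large as the tree-width, and $\tw(K_{s,s})=s$, $\tw(K_s)=s-1$. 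Concretely, I would partition $[n]$ cyclically and let each $G_i$ carry a ``wide band'' of edges so that each $G_i$ contains a clique (or complete bipartite graph) on $\sim n/\sqrt{\text{something}}$... no: since $r$ is \emph{fixed} and we want the ratio to tend to $r$, I actually want each $G_i$ to individually have tree-width $n - o(n)$. The cleanest route: recall that a graph on $n$ vertices with more than $\binom{n}{2} - (n-k)$... rather, use that if $G$ has a set $S$ of $k$ vertices such that $G - S$... Let me instead invoke the known fact (from \cite{JW12}, the $r=2$ case) that $\tw(G) + \tw(\Gc) \ge n - 2$ is false in general but the \emph{upper} bound $\ngsu{\tw}(2;n) \approx 2n$ is witnessed by splitting $K_n$ into two graphs each containing a large clique-minus-a-matching. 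For general fixed $r$ I would iterate: split $[n]$ into $r$ blocks of size $n/r$ and have $G_i$ be the join of block $i$ with everything, minus overlaps — arranging that each $G_i \supseteq K_{n/r, n/r \cdot(r-1)}$ roughly, giving $\tw(G_i) \gtrsim n/r$, hence sum $\gtrsim n$. That only gives ratio $1$, not $r$, so this is too crude; the right construction must make each $G_i$ individually dense. The key realization is that we need graphs $G_1,\dots,G_r$ partitioning $E(K_n)$ with each $\tw(G_i) = n - o(n)$, and the standard way is a ``blow-up of a Steiner-type design'': there exist edge-decompositions of $K_n$ into $r$ graphs each of which is $(n - o(n))$-connected-like; explicitly, a random balanced $r$-coloring of $E(K_n)$ yields each $G_i$ with minimum degree $\sim (1-1/r)n/r \cdot$... still too small. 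So I expect the genuinely correct construction to use a small number of ``special'' vertices: let $G_i$ consist of a clique on a vertex set $A_i$ of size $n - r + 1$ together with leftover edges, where the $A_i$ are arranged (via a near-resolvable design on the $r$ ``deleted'' vertices) so that the cliques are edge-disjoint. Then $\tw(G_i) \ge |A_i| - 1 = n - r$, and the sum is $\ge r(n-r) = rn - o(n)$. Verifying that such edge-disjoint near-spanning cliques exist — essentially decomposing $K_n$ into $r$ cliques of size $\sim n$ — is the combinatorial heart; this is where a design-theoretic or explicit rotational construction is needed, and it is the main obstacle for part~\eqref{mainsumupper1}.

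For part~\eqref{mainsumupper2}, since $\h,\mu,\nu,\xi$ are all minor-monotone and all equal to $n-1$ on $K_n$ but can be much smaller than tree-width (indeed $\xi(G),\nu(G) \le$ something like $\mathrm{tw}(G)+\text{const}$ fails in the other direction — these are generally \emph{smaller}), the behavior changes and we only get $\Theta(n)$ with a gap. For the \emph{upper bound} $\ngsu{\beta}(r;n) \le (\sqrt r + o(1))n$: here I use that all four parameters are bounded above by a function of the number of edges, namely if $G$ has $m$ edges then $\h(G) = O(\sqrt m)$ (a graph with Hadwiger number $t$ has $\Omega(t^2)$ edges, a classical fact), and $\mu,\nu,\xi \le \h$ or are similarly controlled. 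In any $r$-decomposition, $\sum_i |E(G_i)| = \binom n2$, so by Cauchy--Schwarz $\sum_i \sqrt{|E(G_i)|} \le \sqrt r \cdot \sqrt{\binom n2} = \sqrt{r}\cdot \tfrac{n}{\sqrt 2}(1+o(1))$; combined with $\beta(G_i) \le c\sqrt{|E(G_i)|}$ with the right constant $c \to \sqrt 2$ this yields $\sum_i \beta(G_i) \le (\sqrt r + o(1))n$. The point requiring care is the sharp constant in ``$\beta(G)\le \sqrt{2|E(G)|} + o(\sqrt{|E(G)|})$'', i.e. that a graph with a $K_t$-minor has at least $\binom t2$ edges up to lower order — true and classical for $\h$, and for $\mu,\nu,\xi$ it follows since these are $\le \h$ (or we bound them directly). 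For the \emph{lower bound} $\ngsu{\beta}(r;n) \ge \big(\frac{r}{\lceil\sqrt{2r+1/4}-1/2\rceil} - o(1)\big)n$: I would take a decomposition of $K_n$ into $r$ parts each isomorphic to a blow-up of $K_q$ where $q = \lceil \sqrt{2r+\frac14}-\frac12\rceil$ is chosen as the least integer with $\binom{q+1}{2}\ge r$ (so that $K_q$'s edge set, which has $\binom q2 < r$... ) — more precisely, partition $[n]$ into $q$ nearly-equal blocks, and since $K_q$ has $\binom q2$ edges but we have $r$ graphs, we need $\binom q2 \le r$-ish edge-disjoint spanning-ish copies... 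Actually the clean statement: $K_n$ decomposes into $r$ copies of a graph each of which contains $K_{n/q}$ as a subgraph (hence has $\beta \ge n/q - 1$), whenever $r \ge \binom q2$ — take $q$ blocks $V_1,\dots,V_q$, and since $K_q$ has $\binom q2$ edge-slots, distribute the $\binom q2$ complete bipartite graphs $V_a$--$V_b$ plus the $q$ internal cliques among the $r$ parts so that each part gets one internal clique $K_{V_a}$; feasible when $r \ge q + \binom q2 = \binom{q+1}{2}$, giving each $\beta(G_i) \ge n/q - 1$ and sum $\ge rn/q - r$, with $q = \lceil\sqrt{2r+\frac14}-\frac12\rceil$ the least integer satisfying $\binom{q+1}2 \ge r$.

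The main obstacles, then, are: in part~\eqref{mainsumupper1}, producing the near-spanning edge-disjoint clique decomposition (a design-theoretic existence argument, perhaps via a greedy/rotational construction absorbing the $o(n)$ slack); and in part~\eqref{mainsumupper2}, pinning down the sharp asymptotic constant $\sqrt 2$ in the edge-count lower bound $\beta(G) \lesssim \sqrt{2|E(G)|}$ uniformly for all four parameters and checking the Cauchy--Schwarz optimization, together with the arithmetic that the integer $q=\lceil\sqrt{2r+\frac14}-\frac12\rceil$ is exactly the threshold $\binom{q+1}{2}\ge r$. I would also need the routine observation that deleting the $o(n)$ ``leftover'' edges changes each parameter by $o(n)$, which follows since all these parameters are at most $n$ and are monotone / change by at most $1$ under edge deletion for several of them, or can be absorbed by adjusting block sizes.
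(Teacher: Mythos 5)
Your treatment of part~(\ref{mainsumupper2}) is essentially the paper's argument on both sides: the upper bound comes from the quadratic edge-count lower bound ($|E(G)|\ge \frac{\xi(G)(\xi(G)+1)}{2}-1$, hence $\xi(G)^2\le 2|E(G)|$, and similarly $\h(G)^2\lesssim 2|E(G)|$) combined with Cauchy--Schwarz over the $r$ parts, and the lower bound comes from exactly the paper's construction: partition $[n]$ into $t=\lc\trt(r)\rc$ blocks, take the $t$ internal cliques and enough of the $\binom{t}{2}$ complete bipartite graphs between blocks to reach $r$ parts, each contributing roughly $n/t$. One correction there: you write that $\mu,\nu,\xi\le\h$, but the inequalities run the other way ($\h(G)-1\le\nu(G)\le\xi(G)$ and $\h(G)-1\le\mu(G)\le\xi(G)$); your argument survives only because you hedge with ``or we bound them directly,'' which is what the paper does, bounding $\xi$ by the edge count and then passing down to $\mu,\nu$ and up to $\h$ at a cost of $+r$.

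Part~(\ref{mainsumupper1}) has a genuine gap: the construction you settle on cannot exist. You ask for $r$ pairwise edge-disjoint cliques on vertex sets of size $n-r+1$; any two such sets intersect in at least $n-2r+2$ vertices, so the corresponding cliques share $\binom{n-2r+2}{2}>0$ edges for $n>2r$ --- equivalently, each near-spanning clique uses $(1-o(1))\binom{n}{2}$ edges, and $r\ge 2$ of them cannot fit inside $K_n$. No design-theoretic argument can rescue this. The underlying misconception is that tree-width $n-o(n)$ requires near-clique density; it does not. Sparse expander-like graphs have linear tree-width, and in particular $\tw(G(n,p))=n-o(n)$ for any fixed $p>0$ (the paper cites Perarnau--Serra for this). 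The correct route --- which you considered and discarded as ``too small'' because you were estimating via minimum degree --- is the random balanced $r$-coloring of $E(K_n)$: each part is distributed as $G(n,\frac1r)$, so each has expected tree-width $n-o(n)$, and linearity of expectation produces a single decomposition with $\sum_i\tw(G_i)\ge rn-o(n)$. The statements for $\la,\pw,\ppw$ then follow from $\tw\le\la\le\pw\le\ppw$. So the key missing ingredient is the external fact about the tree-width of random graphs, without which your plan for part~(\ref{mainsumupper1}) does not go through.
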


Theorem \ref{mainsumupper} is established by Theorems \ref{thmtwupper},  \ref{thmrxiupper},  \ref{thmretalower}, and Corollary \ref{corrnumulower}.  It should be noted that  Theorem \ref{mainsumupper}\eqref{mainsumupper1} was established for  $r=2$ and tree-width in \cite{JW12} and better results are known for Theorem \ref{mainsumupper}\eqref{mainsumupper2} in the case that $r=2$:  Kostochka \cite{Kost81} showed that for $n\ge 5$, $\ngsu{\h}(2;n)=\lf \frac 6 5 n\rf$, so $\lim_{n\to\infty}\frac {\ngsu{\h}(2;n)}n=\frac 6 5>1=\frac 2 {\lc \sqrt{2\cdot 2+\frac{1}{4}}-\frac{1}{2}\rc}$.  Barrett et al.\! \cite{upNG} showed that $\frac 4 3\le \limsup_{n\to\infty}\frac {\ngsu{\beta}(2;n)}n\le \sqrt 2$ for $\beta=\xi$ or $\nu$.  The fact that $\lim_{n\to\infty}\frac {\ngsu{\eta}(2;n)}n<\limsup_{n\to\infty}\frac {\ngsu{\beta}(2;n)}n$ for $\beta\in\{\xi,\nu\}$ suggests that the upper bound in Theorem \ref{mainsumupper}\eqref{mainsumupper2} is not tight for $\eta$ and the lower bound  is not tight for $\beta\in\{\xi,\nu\}$, since our upper bound for $\eta$ is obtained from our upper bound for  $\xi$ and our lower bound for $\nu$ and $\xi$ is obtained from our lower bound for  $\eta$. 

Next we consider the sum lower bound. 

\begin{thm}\label{mainetabarthm} For a fixed $r\ge 2$ and $n\to\infty$, $\ngsl{\h}(r;n)=\Theta( \frac n{\sqrt {\log n}})$.  Specifically, for $n$ large enough,\vspace{-2mm}
\[\frac 1 {570 r}\frac n{\sqrt {\log n}}\le \ngsl{\h}(r;n)\le  r \frac n{\sqrt {\log n}}.\vspace{-2mm}\]
  \end{thm}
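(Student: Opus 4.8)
The plan is to prove the two bounds separately, each relying on one external theorem whose $\sqrt{\log}$-term is exactly what produces the $n/\sqrt{\log n}$ scale: for the upper bound, the Bollob\'as--Catlin--Erd\H{o}s-type determination of the Hadwiger number of a random graph; for the lower bound, Kostochka's theorem that large average degree forces a large clique minor. The structural work around these is short.

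For the upper bound $\ngsl{\h}(r;n)\le r\,n/\sqrt{\log n}$ I would produce one good $r$-decomposition by the probabilistic method. Color each edge of $K_n$ independently and uniformly with a color from $\{1,\dots,r\}$ and let $G_i$ be the spanning subgraph of color-$i$ edges, so that each $G_i$ is (marginally) distributed as the Erd\H{o}s--R\'enyi random graph $G(n,1/r)$. By the Bollob\'as--Catlin--Erd\H{o}s theorem and its extension to fixed edge-probability $p$, with high probability $\h\big(G(n,p)\big)=(1+o(1))\,n/\sqrt{\log_b n}$ where $b=1/(1-p)$; for $p=1/r$ this is $\h(G_i)=(1+o(1))\,n\sqrt{\ln\tfrac{r}{r-1}}\big/\sqrt{\ln n}$. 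Since $r$ is fixed, a union bound over $i=1,\dots,r$ shows that with positive probability \emph{every} $G_i$ obeys this bound simultaneously, so some $r$-decomposition has $\sum_i\h(G_i)\le (1+o(1))\,r\sqrt{\ln\tfrac{r}{r-1}}\cdot n/\sqrt{\ln n}$. Because $\tfrac{r}{r-1}\le 2<e$ for every $r\ge 2$ we get $r\sqrt{\ln\tfrac{r}{r-1}}<r$, so for $n$ large this sum is at most $r\,n/\sqrt{\ln n}$. (Alternatively, one can first note that $\ngsl{\h}$ is non-increasing in $r$ up to an additive $O(r)$ term — pad a $2$-decomposition with edgeless graphs, each contributing $1$ — and only treat $r=2$.)

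For the lower bound $\ngsl{\h}(r;n)\ge \tfrac{1}{570r}\,n/\sqrt{\log n}$ I would combine pigeonhole with Kostochka's theorem. Let $G_1,\dots,G_r$ be an arbitrary $r$-decomposition of $K_n$. Since $\sum_i|E(G_i)|=\binom n2$, some part $G_j$ has at least $\tfrac1r\binom n2$ edges, hence average degree at least $(n-1)/r$. By an explicit form of Kostochka's theorem there is an absolute constant $c$ such that every graph of average degree at least $c\,t\sqrt{\ln t}$ has $K_t$ as a minor; taking $t$ to be the largest integer with $c\,t\sqrt{\ln t}\le (n-1)/r$ gives $\h(G_j)\ge t$. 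Using $t\le n$ to bound $\sqrt{\ln t}\le\sqrt{\ln n}$, any $t$ with $c\,t\sqrt{\ln n}\le (n-1)/r$ is admissible, so $t\ge \tfrac{n-1}{cr\sqrt{\ln n}}-1$, which exceeds $\tfrac{1}{570r}\cdot\tfrac{n}{\sqrt{\ln n}}$ once $n$ is large (the constant $c$ in the known explicit versions of Kostochka's theorem is well below $570$, leaving ample room). Hence $\ngsl{\h}(r;n)\ge \h(G_j)\ge \tfrac1{570r}\cdot\tfrac n{\sqrt{\ln n}}$, and with the upper bound this yields $\ngsl{\h}(r;n)=\Theta(n/\sqrt{\log n})$.

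The main obstacle is not structural: the two structural moves — a random edge-coloring for the upper bound and a single pigeonhole step for the lower bound — are immediate. The real content is importing and calibrating the two external inputs. If one only wants the $\Theta$-statement they can be quoted as black boxes; to obtain the stated explicit constants $\tfrac1{570r}$ and $r$ one must carefully track the explicit constant in Kostochka's theorem, the conversion between logarithm bases, and the $o(1)$ and ``$n-1$ versus $n$'' lower-order terms, so that everything fits for all sufficiently large $n$ (this is also why natural logarithm, rather than $\log_2$, is the comfortable choice in the $r=2$ boundary case). That bookkeeping is routine, but it is where the care must go.
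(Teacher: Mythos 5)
Your proposal is correct and follows essentially the same route as the paper: the lower bound via pigeonhole (some part has at least $\frac1r\binom n2$ edges) combined with Kostochka's average-degree-forces-a-clique-minor theorem, and the upper bound via a random $r$-decomposition into $G(n,\frac1r)$ parts together with the known asymptotics for the Hadwiger number of a random graph. Your write-up is in fact somewhat more explicit than the paper's about the union bound and the base-of-logarithm calibration in the upper bound, but the two external inputs and the structural steps are the same.
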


Theorem \ref{mainetabarthm} is established by Theorem \ref{etabarthm} using results of Kostochka \cite{Kost84}.

 It is known \cite{EGR11, H15IMA, JW12} that $\ngsl{\beta}(2;n)=n-2$ for $\beta\in\{\tw,\la,\pw,\ppw\}$, and conjectured that $\ngsl{\beta}(2;n)=n-2$ for $\beta\in\{\mu,\nu,\xi\}$ (see \cite{ HLA2, H15IMA} and the references therein). 
  This does not generalize to $r>2$.

\begin{thm}\label{mainallsumlower}
For $\beta\in\{ \tw, \la, \pw, \ppw\}$ and $r\ge 3$, $\ngsl{\beta}(r;n)=\Theta(n)$.  Specifically, 
as $n\to \infty$,\vspace{-2mm}
\[\frac{1}{2}- o(1)<  r-\sqrt{r^2-r}-o(1)\leq \frac{\ngsl{\beta}(r;n)}n\le \frac 3 4 +o(1).\vspace{-2mm}\]
\end{thm}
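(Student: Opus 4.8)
The plan is to establish three separate bounds. The easiest is the lower bound $\frac{1}{2}-o(1) < r - \sqrt{r^2-r}-o(1)$, which is purely an elementary calculus/algebra fact: writing $f(r) = r - \sqrt{r^2-r} = r(1-\sqrt{1-1/r})$, one checks $f$ is decreasing in $r$ with $\lim_{r\to\infty} f(r) = \tfrac12$, so $f(r) > \tfrac12$ for all finite $r \ge 3$; this inequality has no $n$-dependence, so the $o(1)$ terms are cosmetic and just match the form of the other bounds. For the substantive lower bound $\ngsl{\beta}(r;n) \ge (r-\sqrt{r^2-r})n - o(n)$, since $\tw \le \la, \pw, \ppw$ (tree-width is the smallest of the four), it suffices to prove the bound for $\beta = \tw$. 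I would take an arbitrary $r$-decomposition $G_1,\dots,G_r$ of $K_n$ and suppose for contradiction that $\sum_i \tw(G_i)$ is small. The key tool is that a graph of tree-width $t$ has at most $tn - \binom{t+1}{2} \le tn$ edges, so $\sum_i e(G_i) \le n \sum_i \tw(G_i)$; but $\sum_i e(G_i) = \binom n 2$. This alone gives $\sum_i \tw(G_i) \ge \frac{n-1}{2}$, which is the wrong constant. To get the factor $r - \sqrt{r^2-r}$ one must exploit more: low tree-width forces not just few edges but genuine sparseness/structure. I expect the right approach is to use that if $\tw(G_i) = t_i$ then $G_i$ has an independent set (or a set inducing a graph of bounded size) of size roughly $n/(t_i+1)$ — more precisely, a graph of tree-width $t$ is $t$-degenerate, hence $(t+1)$-colorable, hence has an independent set of size $\ge n/(t+1)$. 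Taking the common independent sets / using a weighting (Lagrange-multiplier) argument across the $r$ parts, and observing that $K_n$ restricted to a set $S$ must have its $\binom{|S|}2$ edges covered by the $G_i$'s, should produce a constraint of the form $\sum 1/(t_i+1) \le $ something, which when combined with minimizing $\sum t_i$ subject to that constraint yields the optimum at $t_i$ all equal, giving the value $r - \sqrt{r^2 - r}$ after solving the resulting quadratic.

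For the upper bound $\ngsl{\beta}(r;n) \le \tfrac34 n + o(n)$, I need to \emph{construct}, for every $r \ge 3$, an $r$-decomposition of $K_n$ in which every part has proper path-width (hence tree-width, largeur, path-width, all of which are $\le \ppw$, so it suffices to bound $\ppw$) at most $\tfrac34 n + o(n)$. Since only $r = 3$ parts are needed to split the edges finely (for $r > 3$ we may leave the extra parts empty — empty graphs contribute $0$ — wait, but then the sum is just the $r=3$ value, which is fine as an upper bound since we want the \emph{minimum} over decompositions to be small, and $\le$); actually more carefully, for any $r \ge 3$ we can take a good $3$-decomposition and make parts $G_4,\dots,G_r$ edgeless, so $\ngsl{\beta}(r;n) \le \ngsl{\beta}(3;n)$, and it suffices to handle $r = 3$. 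So I want a $3$-decomposition of $K_n$ with each $\ppw(G_i) \le \tfrac34 n + o(n)$. The natural idea: partition $[n]$ into four blocks $V_1,V_2,V_3,V_4$ of size $\approx n/4$ each, and distribute the six "bipartite" edge-bundles $V_aV_b$ among $G_1,G_2,G_3$ (two bundles each, forming a perfect matching on the $K_4$ of blocks), while the four "internal" cliques on the $V_a$'s are also distributed. Each $G_i$ then looks like a union of two complete bipartite graphs plus possibly some cliques on $\approx n/4$ vertices; a graph on a vertex set that decomposes as blocks with only some block-pairs present has proper path-width controlled by the "width" of a linear arrangement of the blocks — with $4$ blocks of size $n/4$, a good layout has at most $3$ blocks "active" at once, giving $\ppw \le \tfrac34 n + o(n)$. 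I would verify this via the standard interval/path-decomposition: order vertices block by block and bound the maximum number of vertices in any "window" that has both earlier and later neighbors.

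The main obstacle is the substantive lower bound and its matching to the precise constant $r - \sqrt{r^2-r}$. A naive edge count gives only $\tfrac12$; extracting the extra leverage requires a careful argument — likely a clever averaging over vertex subsets or a probabilistic/entropy argument establishing that low-tree-width graphs cannot jointly cover $K_n$ unless their tree-widths satisfy a convexity constraint strictly stronger than the edge bound. Getting the optimization to land exactly on $r - \sqrt{r^2-r}$ (rather than some weaker constant) is the delicate point, and I would expect to need the degeneracy/coloring property of bounded-tree-width graphs together with a Lagrangian optimization over the $t_i$. The upper-bound construction, by contrast, should be routine once the block structure is set up, and the elementary inequality $r-\sqrt{r^2-r} > \tfrac12$ is immediate.
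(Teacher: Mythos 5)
There are two genuine gaps, one in each half of the argument. For the lower bound, you correctly observe that the naive count $e(G)\le \tw(G)\,n$ yields only the constant $\tfrac12$, but you then discard exactly the term that rescues the argument and replace it with a speculative detour. A graph of tree-width $k$ on $n$ vertices has at most $E_k=\frac{k(k-1)}{2}+(n-k)k=-\frac12 k^2+(n-\frac12)k$ edges, and the paper's proof (Theorem \ref{thm:twblbd}) keeps the quadratic term: writing $S_1=\sum_i k_i$ and $S_2=\sum_i k_i^2$ for $k_i=\tw(G_i)$, Cauchy--Schwarz gives $S_2\ge S_1^2/r$, so the covering condition $\sum_i E_{k_i}\ge\binom n2$ becomes $-\frac{1}{2r}S_1^2+(n-\frac12)S_1-\frac{n(n-1)}{2}\ge 0$, and solving this quadratic yields $S_1\ge rn-\frac r2-\sqrt{(r^2-r)n^2-(r^2-r)n+\frac14 r^2}=(r-\sqrt{r^2-r})n-o(n)$. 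No independent-set, degeneracy, or Lagrangian machinery is needed, and the alternative you sketch is not carried out to the point where one could check it lands on the right constant; as written your lower bound is incomplete.

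For the upper bound, your construction is the right one (it is the paper's: partition into four blocks, give each $G_i$ one of the three perfect matchings of block-pairs, and distribute the internal cliques), but your stated conclusion does not prove the theorem. You claim each part satisfies $\ppw(G_i)\le\frac34 n+o(n)$ via a ``three blocks active at once'' layout; summing that over three parts gives $\frac94 n$, not $\frac34 n$. What you need, and what this decomposition actually delivers, is $\pw(G_i)\le \frac n4+O(1)$ for \emph{each} part: with the internal cliques placed as in the paper, each $G_i$ is a disjoint union of two graphs on about $n/2$ vertices, each either $K_{p,p}$ or the join of $K_p$ with $\overline{K_p}$ ($p\approx n/4$), and each of these has path-width $p$. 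Then $\ppw\le\pw+1$ covers proper path-width, and taking the remaining $r-3$ parts edgeless handles $r>3$. The elementary inequality $r-\sqrt{r^2-r}>\tfrac12$ you handle correctly.
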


Theorem \ref{mainallsumlower} is established by Corollaries \ref{thmallsumlower}  and  \ref{cor:twblower}.
Since $\eta(G)-1\le \nu(G)\le\xi(G)\le\ppw(G)$ and $\eta(G)-1\le \mu(G)\le\xi(G)\le\ppw(G)$ for any graph that has an edge, Theorems \ref{mainetabarthm} and \ref{mainallsumlower} imply lower and upper bounds on for $\beta\in\{ \mu,\nu,\xi\}$. 

Next we turn our attention to product bounds, beginning with the upper bound and followed by the non-degenerate lower bound.

\begin{thm}\label{mainprodupper} $\null$\vspace{-3.5mm}
\ben[(a)]
\item\label{mainprodupper1} For a fixed $r\ge 2$ and $\beta$ one of  tree-width $\tw$ or its variants 
$\la$, 
$\pw,$ or 
$\ppw,$ \vspace{-2mm}
\[ \lim_{n\to\infty} \frac {\ngpu{\beta}(r;n)}{n^r}= 1.\vspace{-2mm}\] 
\item\label{mainprodupper2} For a fixed $r\ge 2$ and  $\beta$ one of  the Hadwiger number $\eta$ or the  Colin de Verdi\`ere type parameters $\nu,\mu,\xi$, $\ngpu{\beta}(r;n)=\Theta(n^r)$. Specifically, as       $n\to\infty$,   \vspace{-2mm}  
\[\lc \sqrt{2  r+\frac{1}{4}}-\frac{1}{2}\rc^{-r}-o(1)\leq  \frac{\ngpu{\beta}(r;n)}{n^r}\le (\sqrt r)^{-r}+o(1).\vspace{-2mm}\]
\een
\end{thm}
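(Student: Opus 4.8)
The plan is to read both parts off the sum upper bounds of Theorem~\ref{mainsumupper}, using the AM--GM inequality for the product upper bounds and exhibiting ``balanced'' $r$-decompositions (every part contributing essentially the same amount) for the product lower bounds. For any $r$-decomposition $G_1,\dots,G_r$ of $K_n$, AM--GM gives
\[
\beta(G_1)\cdots\beta(G_r)\le\Bigl(\tfrac1r\bigl(\beta(G_1)+\dots+\beta(G_r)\bigr)\Bigr)^r\le\Bigl(\tfrac1r\,\ngsu{\beta}(r;n)\Bigr)^r .
\]
For part~\eqref{mainprodupper1} each of $\tw,\la,\pw,\ppw$ is at most $n-1$ on an $n$-vertex graph, so $\ngsu{\beta}(r;n)\le r(n-1)$ and hence $\ngpu{\beta}(r;n)\le(n-1)^r$, giving $\limsup_n\ngpu{\beta}(r;n)/n^r\le1$. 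For part~\eqref{mainprodupper2}, Theorem~\ref{mainsumupper}\eqref{mainsumupper2} yields $\ngsu{\beta}(r;n)\le(\sqrt r+o(1))n$, hence $\ngpu{\beta}(r;n)\le\bigl(\tfrac{n}{\sqrt r}+o(n)\bigr)^r=(\sqrt r)^{-r}n^r+o(n^r)$, i.e.\ $\limsup_n\ngpu{\beta}(r;n)/n^r\le(\sqrt r)^{-r}$.

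For the lower bound in part~\eqref{mainprodupper1}, fix an $r$-decomposition attaining $\ngsu{\beta}(r;n)$. By Theorem~\ref{mainsumupper}\eqref{mainsumupper1} its parts satisfy $\beta(G_1)+\dots+\beta(G_r)\ge rn-o(n)$, while $\beta(G_i)\le n-1$ for every $i$; writing $\beta(G_i)=n-1-s_i$ with $s_i\ge0$ forces $\sum_i s_i\le r(n-1)-(rn-o(n))=o(n)$, so each $s_i=o(n)$ and $\beta(G_i)\ge n-o(n)$. Therefore $\ngpu{\beta}(r;n)\ge\beta(G_1)\cdots\beta(G_r)\ge(n-o(n))^r=n^r-o(n^r)$, and with the upper bound this gives $\lim_n\ngpu{\beta}(r;n)/n^r=1$.

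For the lower bound in part~\eqref{mainprodupper2}, set $k:=\lc\sqrt{2r+\tfrac14}-\tfrac12\rc$, the least positive integer with $\binom{k+1}{2}\ge r$ (equivalently $k^2+k\ge2r$), and partition $[n]$ into $k$ parts $V_1,\dots,V_k$ of sizes $\lfloor n/k\rfloor$ or $\lceil n/k\rceil$. The edges of $K_n$ fall into the $k$ ``within'' classes $K_n[V_i]$ and the $\binom k2$ ``between'' classes $K_n[V_i,V_j]$, a total of $\binom{k+1}{2}\ge r$ classes; group these into $r$ nonempty groups and let $G_i$ be the union of the classes in group $i$, so $G_1,\dots,G_r$ is an $r$-decomposition of $K_n$. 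Each $G_i$ contains at least one class, hence contains $K_{\lfloor n/k\rfloor}$ or $K_{\lfloor n/k\rfloor,\lfloor n/k\rfloor}$ as a subgraph; since $K_{m,m}$ has a $K_m$ minor (pair up its $2m$ vertices), $G_i$ has a $K_{\lfloor n/k\rfloor}$ minor, so $\eta(G_i)\ge\lfloor n/k\rfloor$ by minor monotonicity of $\eta$, and $\beta(G_i)\ge\eta(G_i)-1\ge\lfloor n/k\rfloor-1$ for $\beta\in\{\mu,\nu,\xi\}$ via $\eta(G)-1\le\mu(G)\le\xi(G)$ and $\eta(G)-1\le\nu(G)\le\xi(G)$. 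Hence $\ngpu{\beta}(r;n)\ge\beta(G_1)\cdots\beta(G_r)\ge(\lfloor n/k\rfloor-1)^r=k^{-r}n^r-o(n^r)$, so $\liminf_n\ngpu{\beta}(r;n)/n^r\ge k^{-r}=\lc\sqrt{2r+\tfrac14}-\tfrac12\rc^{-r}$; together with the upper bound this also shows $\ngpu{\beta}(r;n)=\Theta(n^r)$.

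The real work is already packaged in the sum bounds of Theorem~\ref{mainsumupper}; given those, the only point needing care is the balancing, which for part~\eqref{mainprodupper1} is automatic (a decomposition whose parts sum to nearly $rn$ must have every part near $n$) and for part~\eqref{mainprodupper2} is arranged by choosing the vertex parts $V_i$ of nearly equal size, so that each $G_i$ inherits a clique minor of order $\approx n/k$.
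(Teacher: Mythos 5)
Your argument is correct and follows essentially the same route as the paper: the upper bounds come from AM--GM applied to the sum upper bounds (Theorems \ref{thmtwupper} and \ref{thmrxiupper}), and your lower-bound construction for part (b) --- partitioning $[n]$ into $t=\lc\trt(r)\rc$ nearly equal classes and distributing the $\binom{t+1}{2}\ge r$ ``within'' and ``between'' edge classes among $r$ nonempty groups, each of which then inherits a clique minor of order about $\lf n/t\rf$ --- is exactly the decomposition used in the paper's Theorem \ref{thm:CdVpu}. The one place you diverge is the lower bound in part (a): the paper feeds a near-optimal decomposition (one with sum at least $(r-\epsilon)n$) into a general sum-to-product conversion (Lemma \ref{lem:mainieq}, which bounds the minimum possible product subject to a fixed sum and the constraint $1\le a_i\le n$), whereas you argue directly that since each $\beta(G_i)\le n-1$ and the sum is $rn-o(n)$, the slacks $s_i=n-1-\beta(G_i)$ are nonnegative and sum to $o(n)$, forcing every part to satisfy $\beta(G_i)\ge n-o(n)$. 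Your balancing argument is a clean and correct substitute here; the paper's lemma is slightly more general machinery that it reuses for the product lower bounds later (e.g.\ in Theorem \ref{thm:sum2prod}), but for this theorem the two are interchangeable and yield the same conclusion.
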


Theorem \ref{mainprodupper} is established by Theorems \ref{thm:twpu} and  \ref{thm:CdVpu}. Again,  better results are known for Theorem \ref{mainsumupper}\eqref{mainsumupper2} in the case that $r=2$:  Kostochka \cite{Kost81} showed that for $n\ge 5$, $\ngsu{\h}(2;n)=\lf \frac 1 4\lf\frac{6 n}5\rf^2\rf$, so $\lim_{n\to\infty}\frac {\ngpu{\h}(2;n)}{n^2}=\frac 9{25}> \frac 1 4 =\lc \sqrt{2 \cdot 2+\frac{1}{4}}-\frac{1}{2}\rc^{-2}$.  Barrett et al.\! \cite{upNG} showed that $\frac 4 9\le \limsup_{n\to\infty}\frac {\ngsu{\beta}(2;n)}n\le \frac 1 2$ for $\beta=\xi$ or $\nu$.  Again the upper bound in Theorem \ref{mainsumupper}\eqref{mainsumupper2} is likely not tight for $\eta$ and the lower bound  is likely not tight for $\beta\in\{\xi,\nu\}$. 

\begin{thm}\label{mainprodlower} For a fixed $r\ge 2$ and $\beta$  one of the Hadwiger number $\eta$, the  Colin de Verdi\`ere type parameters $\nu,\mu,\xi$, tree-width $\tw$ or its variants   $\la,\pw$, or $\ppw$, 
$ {\ngpln{\beta}(r;n)}=\Theta(n).$ 
Specifically:\vspace{-3mm}
\ben[(a)] 
\item Let $\beta\in \{\tw,\la,\pw,\ppw\}$.  For $n\ge 4$, $\ngpln{\beta}(2;n)=n-3$ {\rm\cite{H15IMA}}.  For $r\ge 3$ as $n\to\infty$, \vspace{-2mm}
\[\frac{1}{2}-o(1)\leq \frac{\ngpln{\beta}(r;n)}n\leq 1,\vspace{-2mm}\]
\item For  $\beta\in\{\nu,\xi,\mu\}$, $r\ge 2$, and  $n\ge 1$,\vspace{-2mm}
 \[\frac 1 {2^{2r-2}}\leq\frac{\ngpln{\beta}(r;n)}n\le 1.\vspace{-2mm}\]
\item For $r\ge 2$ and   $n\ge 1$,\vspace{-2mm}
\[(0.513)^{r-2} \leq \frac{\ngpln{\h}(r;n)}n\leq 2^{r-1}.\vspace{-2mm}\]

\een

  \end{thm}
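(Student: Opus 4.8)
The plan is to dispatch every upper bound with a single explicit decomposition, to obtain the lower bound in (a) from the sum lower bound of Theorem~\ref{mainallsumlower}, to prove the lower bound in (c) by induction on $r$, and to reduce the lower bound in (b) to (c). For the upper bounds, take $n$ large enough that $K_n$ has at least $r$ edges and use the non-degenerate $r$-decomposition in which $G_2,\dots,G_r$ are single edges and $G_1$ is $K_n$ with those $r-1$ edges removed (for the few smaller $n$ for which a non-degenerate decomposition exists at all, allow the removed edges not to form a matching). Since $\beta(K_2)=1$ and $\beta(G_1)\le\beta(K_n)=n-1$ for $\beta\in\{\tw,\la,\pw,\ppw,\nu,\xi,\mu\}$, the product is at most $n-1$, giving $\ngpln{\beta}(r;n)/n\le 1$ in (a) and (b); since $\h(K_2)=2$ and $\h(G_1)\le\h(K_n)=n$, the product is at most $2^{r-1}n$, giving (c).

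For the lower bound in (a), let $\beta\in\{\tw,\la,\pw,\ppw\}$ and let $G_1,\dots,G_r$ be an arbitrary non-degenerate $r$-decomposition. Each $\beta(G_i)\ge 1$ because $G_i$ has an edge, and $\sum_i\beta(G_i)\ge\ngsl{\beta}(r;n)\ge(r-\sqrt{r^2-r}-o(1))n$ by Theorem~\ref{mainallsumlower}. Because $t\mapsto\log t$ is concave, the minimum of $\prod_i x_i$ over $\{x_i\ge 1:\sum_i x_i=S\}$ is attained at a vertex of that polytope, where all but one coordinate equal $1$; hence $\prod_i\beta(G_i)\ge\sum_i\beta(G_i)-(r-1)\ge(r-\sqrt{r^2-r}-o(1))n$ for fixed $r$. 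Since $r-\sqrt{r^2-r}>\tfrac12$ for every $r$, this gives $\ngpln{\beta}(r;n)/n\ge\tfrac12-o(1)$.

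For the lower bound in (c) I would induct on $r$. The base case is the two-part inequality $\h(G)\,\h(\Gc)\ge n$, valid whenever $G$ and $\Gc$ both have an edge: if $\min(\h(G),\h(\Gc))\le 6$, say $\h(G)\le 6$, then the cases of Hadwiger's conjecture that are known (for Hadwiger number at most $6$) give $\chi(G)\le\h(G)$, so $G$ has an independent set of size $\ge n/\h(G)$, which is a clique in $\Gc$, whence $\h(\Gc)\ge n/\h(G)$ and $\h(G)\h(\Gc)\ge n$; the complementary case $\h(G),\h(\Gc)\ge 7$ is handled with the Duchet--Meyniel bound $\h(H)\ge |V(H)|/(2\alpha(H)-1)$, the inequality $\alpha(G)=\omega(\Gc)\le\h(\Gc)$, and the fact that a graph with bounded Hadwiger number has only $O(n)$ edges, so that its complement is dense. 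For the inductive step, order the parts so that $\h(G_1)$ is smallest and iteratively extract independent sets: put $V_0=[n]$, and let $V_k$ be a maximum independent set of $G_k[V_{k-1}]$, so $|V_k|>|V_{k-1}|/(2\h(G_k))$ by Duchet--Meyniel, improved to $|V_k|\ge|V_{k-1}|/\h(G_k)$ when $\h(G_k)\le 6$. After $r-1$ steps, $V_{r-1}$ spans no edge of $G_1\cup\dots\cup G_{r-1}$, so $K_{|V_{r-1}|}\subseteq G_r$ and $\h(G_r)\ge|V_{r-1}|$; multiplying the estimates yields $\prod_{i=1}^r\h(G_i)>n/2^{r-1}$ in general, and combining the iteration with the base case, with the small-Hadwiger-number improvement, and with a case split on whether the smallest Hadwiger number among the parts is bounded or large upgrades this to $(0.513)^{r-2}n$.

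Part (b) then follows from (c): for $\beta\in\{\nu,\xi,\mu\}$ and a graph with an edge, $\beta(G)\ge\h(G)-1\ge\h(G)/2$ since $\h(G)\ge 2$, so for any non-degenerate $r$-decomposition $\prod_i\beta(G_i)\ge 2^{-r}\prod_i\h(G_i)\ge 2^{-r}(0.513)^{r-2}n\ge 2^{-(2r-2)}n$, the last step because $0.513>\tfrac12$. The hard part is thus the lower bound in (c), and within it the constant: the Duchet--Meyniel iteration by itself gives only $2^{-(r-1)}n$, so reaching $(0.513)^{r-2}n$ forces one to squeeze out the verified small cases of Hadwiger's conjecture and to account carefully for what happens when the part of smallest Hadwiger number restricts to an \emph{edgeless} graph on the extracted independent set (so that the recursion effectively drops a coordinate rather than merely losing a factor). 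Making the base case $\h(G)\h(\Gc)\ge n$ airtight — in particular the subcase where both Hadwiger numbers exceed $6$, where Duchet--Meyniel on its own delivers only about $\tfrac23 n$ and one must exploit the sparsity of the low-Hadwiger-number side — is the other place where real care is required. Everything else, namely the upper-bound construction and the reductions of (a) to the sum bound and of (b) to (c), is routine.
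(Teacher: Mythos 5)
Your upper-bound construction (one near-complete part plus $r-1$ single edges), your derivation of the lower bound in (a) from the sum lower bound via the concavity/vertex argument, and your reduction of (b) to (c) via $\beta(G)\ge\h(G)-1\ge\tfrac12\h(G)$ are all correct and match the paper in substance: the paper converts sums to products with the same extreme-point observation (Lemma \ref{lem:mainieq} and Theorem \ref{thm:sum2prod}) and proves (b) exactly as you do (Corollary \ref{cor:etaprodnd2munuxi}); its upper bounds come from a slightly sharper decomposition into $r-1$ paths plus one large piece (Theorem \ref{oldthmallsumlower}), but your simpler decomposition suffices for the stated bounds.

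The genuine gap is the lower bound in (c), which you correctly identify as the hard part but do not actually prove. The constant $0.513$ is not something you can ``squeeze out'' of Duchet--Meyniel plus the verified cases of Hadwiger's conjecture: it is precisely the constant in the Balogh--Kostochka theorem \cite{BK11} that $\h(G)\,\omega(\Gc)\ge 0.513\,n$, i.e.\ $\alpha(G)\ge 0.513\,n/\h(G)$, a genuine strengthening of Duchet--Meyniel's $\alpha(G)>n/(2\h(G))$ whose proof (building on Fox \cite{JFox}) is substantial and has nothing to do with small Hadwiger numbers. Your per-step factor from Duchet--Meyniel is $\tfrac12$, and your proposed case split gives no mechanism to beat $\tfrac12$ when every part has large Hadwiger number and large independence number, so your iteration genuinely stalls at $n/2^{r-1}$. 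Your base case $\h(G)\h(\Gc)\ge n$ is likewise not established: in the subcase $\h(G),\h(\Gc)\ge 7$, Duchet--Meyniel with $\alpha(G)=\omega(\Gc)\le\h(\Gc)$ yields only $\h(G)\h(\Gc)\ge(n+\h(G))/2$, and the appeal to ``sparsity of the low-Hadwiger-number side'' is vacuous there (neither side is bounded) and in any case density of $\Gc$ does not produce a clique minor of order $n/\h(G)$ when $\h(G)$ is, say, between $7$ and $\sqrt{\log n}$. The paper sidesteps both difficulties by quoting Kostochka \cite{K89} for $\ngpl{\h}(2;n)= n$ as the base case and then running your induction with Balogh--Kostochka in place of Duchet--Meyniel: extract the clique $W$ of $\overline{G_1}$ with $|W|\ge 0.513\,n/\h(G_1)$, note $G_2[W],\dots,G_r[W]$ decompose $K_{|W|}$, and induct, losing a factor $0.513$ only $r-2$ times because the $r=2$ base case costs nothing (Theorem \ref{thm:etaprod}). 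To repair your write-up, replace your from-scratch base case and your ``upgrade'' by citations to these two results; as written, part (c), and hence the specific constant in part (b), is unproved.
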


Theorem \ref{mainprodlower} is established by Theorem \ref{thm:twprod}  and Corollaries \ref{thm:etaprodnd} and \ref{cor:etaprodnd2munuxi}.
In the case of the Hadwiger number, the general lower bound, which permits graphs with no edges, is also of interest.

\begin{thm}\label{prodeta} For a fixed  $r\ge 2$, $ {\ngpl{\h}(r;n)}=\Theta(n).$  Specifically,\vspace{-2.5mm}
\[(0.513)^{r-2} \leq \frac{\ngpl{\h}(r;n)}n\leq 1.\vspace{-2mm}\]
   \end{thm}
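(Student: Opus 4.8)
The plan is to derive both bounds by combining the already-established non-degenerate product lower bound for $\h$ (Theorem \ref{mainprodlower}(c)) with a short argument that handles the possibility that some $G_i$ in a decomposition is edgeless. Recall $\h(G)\ge 1$ for every graph with an edge, $\h(G) = 1$ for a graph with at least two vertices and no edges, and in general $\h(G)\ge 0$ with equality only for $K_1$; since $n\ge 2$ in the asymptotic regime, $\h(G_i)\ge 1$ for every spanning subgraph $G_i$ of $K_n$, whether or not $G_i$ has an edge. This is the key observation that makes the general lower bound follow almost immediately from the non-degenerate one.

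First I would prove the lower bound $\ngpl{\h}(r;n)\ge (0.513)^{r-2}\, n$. Given any $r$-decomposition $G_1,\dots,G_r$ of $K_n$, let $S=\{i : G_i \text{ has an edge}\}$ and $k=|S|$; note $k\ge 1$ since the edges of $K_n$ must go somewhere, and in fact $k\ge 2$ unless all edges lie in a single part, a case one checks directly (if $k=1$ then the unique nonempty part is $K_n$ with $\h=n$ and all other parts contribute $1$, giving product $n \ge (0.513)^{r-2} n$). For $k\ge 2$, restrict attention to the $k$ nonempty parts: they form an $r'$-decomposition with $r'=k\ge 2$ of a graph on $n$ vertices whose edge set is all of $E(K_n)$ — more precisely, I would invoke Theorem \ref{mainprodlower}(c) applied to the sub-decomposition, which gives $\prod_{i\in S}\h(G_i)\ge (0.513)^{k-2} n$, and then the remaining $r-k$ edgeless parts each contribute a factor $\h(G_i)\ge 1$, so the full product is at least $(0.513)^{k-2} n \ge (0.513)^{r-2} n$ because $0.513 < 1$ and $k\le r$. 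This yields the claimed lower bound.

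For the upper bound $\ngpl{\h}(r;n)\le n$, I would exhibit (or cite the existence of) a specific $r$-decomposition whose product of Hadwiger numbers is at most $n$. The natural candidate is a decomposition in which one part is a spanning subgraph with $\h = n$ (or close to it is not helpful; we want the product small, so instead we want all parts to have small $\h$). Actually the right construction is the opposite: take $G_1 = K_n$ minus a perfect matching (or similar) — no: to make the product of minima small we want a decomposition where the $\h$-values multiply to something $\le n$. Since $\ngpl{\h}(r;n)$ is itself a minimum over decompositions, it suffices to find \emph{one} decomposition with small product. Put all edges into $G_1$, so $G_1 = K_n$ and $\h(G_1)=n$, and let $G_2,\dots,G_r$ be edgeless with $\h(G_i)=1$; the product is exactly $n$. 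Hence $\ngpl{\h}(r;n)\le n$, and combined with the lower bound and the fact that $(0.513)^{r-2}>0$ is a constant for fixed $r$, we get $\ngpl{\h}(r;n)=\Theta(n)$.

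The only genuine subtlety — and the one step I would be most careful about — is the reduction in the lower-bound argument: one must check that Theorem \ref{mainprodlower}(c), which is stated for the non-degenerate bound $\ngpln{\h}(r;n)$ over decompositions of $K_n$, applies verbatim when the $k$ nonempty parts are viewed as a non-degenerate $k$-decomposition of $K_n$; since those $k$ parts do partition all of $E(K_n)$ and each has an edge, this is exactly the hypothesis of $\ngpln{\h}(k;n)$, so the inequality $\prod_{i\in S}\h(G_i)\ge \ngpln{\h}(k;n)\ge (0.513)^{k-2}n$ is immediate. No further estimation is needed, and the $k=1$ boundary case is dispatched as above.
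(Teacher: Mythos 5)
Your upper bound argument is exactly the paper's: put all edges into $G_1=K_n$ and leave $G_2,\dots,G_r$ edgeless, giving product $n$. The problem is the lower bound. You derive $\ngpl{\h}(r;n)\ge(0.513)^{r-2}n$ from the non-degenerate bound $\ngpln{\h}(k;n)\ge(0.513)^{k-2}n$ of Theorem \ref{mainprodlower}(c), by stripping out the edgeless parts (each contributing a factor $\h=1$) and applying the non-degenerate bound to the $k$ parts that remain. As pure logic that reduction is fine, but within this paper it is circular: Theorem \ref{mainprodlower}(c) is established by Corollary \ref{thm:etaprodnd}, whose lower bound is proved \emph{from} Theorem \ref{thm:etaprod} --- that is, from the very general-decomposition bound you are trying to prove --- together with the trivial inequality $\ngpln{\h}(r;n)\ge\ngpl{\h}(r;n)$. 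Neither the paper nor your proposal gives an independent proof of the non-degenerate bound, so your argument assumes essentially what it must show.

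The substantive content that is missing is the paper's induction on $r$ using the Balogh--Kostochka inequality $\h(G)\,\omega(\overline{G})\ge 0.513\,n$: given any $r$-decomposition, $\overline{G_1}$ contains a clique on a vertex set $W$ with $|W|\ge 0.513\,n/\h(G_1)$; the restrictions $G_2[W],\dots,G_r[W]$ form an $(r-1)$-decomposition of the complete graph on $W$; and the inductive hypothesis applied there gives $\h(G_1)\cdots\h(G_r)\ge \h(G_1)\cdot(0.513)^{r-3}|W|\ge(0.513)^{r-2}n$. The base case $r=2$, namely $\ngpl{\h}(2;n)=n$, itself rests on Kostochka's nontrivial result $\ngpln{\h}(2;n)\ge\lceil(3n-5)/2\rceil$ (Remark \ref{rem:degenHadprodlower}); your $k=1$ observation does not substitute for it. (A small side error: $\h(K_1)=1$, not $0$, since $K_1$ is its own clique minor of order $1$; this does not affect the rest.)
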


Theorem \ref{prodeta} is established by Theorem \ref{thm:etaprod}, with the case $\ngpl{\h}(2;n)=n$ following from results of Kostochka \cite{K89} (see Remark \ref{rem:degenHadprodlower}).
We also make a conjecture about the product lower bound for $\eta$, which  is implied by Hadwiger's Conjecture (see Remark \ref{rem:etapl}).

\begin{conj}
\label{etapl}
For all  $r\ge 2$ and $n\ge 1$,  $\ngpl{\h}(r;n)=n$.
\end{conj}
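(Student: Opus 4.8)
Since the inequality $\ngpl{\h}(r;n)\le n$ is already part of Theorem~\ref{prodeta} (and is witnessed directly by the $r$-decomposition with $G_1=K_n$ and $G_2,\dots,G_r$ edgeless, whose product is $n\cdot 1\cdots 1=n$), the entire content of this conjecture is the matching lower bound: every $r$-decomposition $G_1,\dots,G_r$ of $K_n$ should satisfy $\eta(G_1)\cdots\eta(G_r)\ge n$. So the plan is to establish this lower bound, and the natural route is to pass through the chromatic number and then invoke Hadwiger's Conjecture.

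Concretely, I would first fix, for each $i$, an optimal proper coloring $c_i\colon[n]\to\{1,\dots,\chi(G_i)\}$ of $G_i$, and observe that the map $v\mapsto(c_1(v),\dots,c_r(v))$ is a proper coloring of $K_n$: any two distinct vertices form an edge of $K_n$, hence an edge of some $G_i$, and then $c_i$ assigns them distinct $i$-th coordinates. This coloring uses at most $\prod_i\chi(G_i)$ colors, so $n=\chi(K_n)\le\prod_i\chi(G_i)$. Then, applying Hadwiger's Conjecture in the form $\eta(H)\ge\chi(H)$ to each $G_i$, I would conclude $\prod_i\eta(G_i)\ge\prod_i\chi(G_i)\ge n$. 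Combined with the upper bound this gives $\ngpl{\h}(r;n)=n$, which is the implication referred to in Remark~\ref{rem:etapl}.

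The main obstacle is that Hadwiger's Conjecture is open. It is known only when $\chi(H)\le 6$ (the cases $\chi=5,6$ coming from the Four Color Theorem, due to Wagner and to Robertson--Seymour--Thomas), whereas a part of a decomposition of $K_n$ can have chromatic number as large as $n-1$ (e.g.\ $G_1=K_n-e$, with the missing edge in $G_2$), so the argument above requires the full conjecture once $n\ge 8$. Without it, the product-coloring idea only yields a bound bounded away from $n$; indeed for $r\ge 3$ Theorem~\ref{prodeta} gives just $(0.513)^{r-2}n\le\ngpl{\h}(r;n)$. One might hope instead to bootstrap the $r\ge 3$ case from the theorem $\ngpl{\h}(2;n)=n$ (Kostochka~\cite{K89}) by merging parts, but this appears to require, for a non-complete graph $H$ with an edge-decomposition $H=H_1\cup\cdots\cup H_k$, the inequality $\prod_i\eta(H_i)\ge\eta(H)$, and I do not see how to obtain that independently of Hadwiger's Conjecture. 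So I do not expect an unconditional proof; the statement should remain conjectural, with Hadwiger's Conjecture the natural sufficient hypothesis.
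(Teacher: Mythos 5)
Your proposal matches the paper's own treatment: the statement is left as a conjecture, and the paper's Remark~\ref{rem:etapl} gives exactly your argument, namely that $\ngpl{\chi}(r;n)=n$ (which you re-derive via the product-coloring observation rather than citing \cite{multiNG}) together with Hadwiger's Conjecture $\h(G)\ge\chi(G)$ yields the lower bound, while the decomposition $K_n,\overline{K_n},\dots,\overline{K_n}$ gives the matching upper bound. You also correctly identify that no unconditional proof is available and that the statement remains conjectural, which is precisely the paper's position.
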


\subsection{Definitions of parameters and notation}\label{ssparamdefs}

All graphs are simple, undirected, and finite,  $G$ denotes a graph, and $n$ denotes the  order of $G$. A more complete description of  the parameters discussed here (and the graph notation we use) can be found in  \cite{param} and \cite{H15IMA}.  
The \emph{clique number} $\omega(G)$ is the maximum order of a clique in $G$ and 
  the \emph{Hadwiger number} $\h(G)$ is the maximum order of  a clique minor of $G$.

A graph consisting of isolated vertices has tree-width zero (and similarly for the variants of tree-width defined here), so in the remainder of this paragraph we assume a graph has an edge.  Let $k$ be a positive integer.  A \emph{$k$-tree} is constructed inductively by starting with a
complete graph on $k+1$ vertices and connecting each new vertex to the vertices of an existing
clique on $k$ vertices, so a tree of order at least two is a 1-tree.
   The {\em tree-width} $\tw(G)$ 
  is the minimum $k$ for which $G$ is a subgraph of a $k$-tree.
Every $k$-tree has at least two vertices of degree $k$.    The maximal cliques of a $k$-tree are of order ${k+1}$, and the \emph{facets} of a  maximal clique are its $k$-clique subgraphs.  
A {\em linear $k$-tree}  is constructed
inductively by starting with $K_{k+1}$ and connecting each new vertex to
a facet that includes 
the vertex added in the previous step.      
The {\em proper path-width} $\ppw(G)$  
is the minimum $k$  for which $G$ is a subgraph of a linear $k$-tree.
A {\em $k$-caterpillar} is constructed  by starting with $K_{k+1}$ and at each stage adding a new maximal clique by adjoining a new vertex to the $k$ vertices of some facet of the maximal clique that was added in the previous step.   
     The {\em path-width} $\pw(G)$  
      is the minimum $k$ for which $G$ is a subgraph of a $k$-caterpillar.  
A {\em two-sided  $k$-tree}  is
constructed  by starting with $K_{k+1}$ and connecting each
new vertex to the vertices of an existing $K_{k}$ that either includes
a vertex of degree $k$ or is the same as the $K_{k}$ to which some
previous vertex was connected.
The {\em  largeur d'arborescence} $\la(G)$  
is the minimum $k$ for which $G$ is a subgraph of a two-sided $k$-tree. Clearly $\tw(G)\le\la(G)\le\pw(G)\le\ppw(G)$ for every graph $G$.  It is known that $\la(G)\le\tw(G)+1$ \cite{CdVnu} and $\ppw(G)\le\pw(G)+1$ \cite{TUK94} for every graph $G$.
  A more comprehensive discussion of these parameters, including justification for some of  the equivalent definitions used here, is given in \cite{param}.


The Colin de Verdi\`ere type parameters are linear algebraic graph parameters.  All matrices discussed are real and symmetric; the set of $n\times n$ real symmetric matrices is denoted by $\sRn$.   
For $A=[a_{ij}]\in \sRn$,
the \emph{graph} of $A$ is $\G(A)=([n],E)$ where 
  $E=\{\{i,j\} : i,j\in[n],  i\ne j, \mbox{ and }a_{ij} \ne 0  \}$; the diagonal of $A$ is ignored in determining $\G(A)$.
The \emph{set of symmetric matrices described by  $G$} is
$\SG=\{A\in\sRn : \G(A)=G\}.$  A real symmetric matrix $A$ satisfies the \emph{Strong Arnold
Property} provided there does {not} exist a nonzero real symmetric matrix
$X$ satisfying $AX = O$, $A\circ X = O$, and $I\circ X=O$, where $\circ$ denotes the entry-wise  product, i.e., $(A\circ B)_{ij}=a_{ij}b_{ij}$,  $I$ is the identity matrix, and $O$ is the zero matrix.
The parameter $\xi(G)$ is  the maximum nullity
among  matrices $A\in\sym(G)$  satisfying the Strong Arnold Property.  
The parameter $\nu(G)$ is  the maximum nullity
among positive semidefinite matrices $A\in\sym(G)$  satisfying the Strong Arnold Property (a matrix $A\in\sRn$ is \emph{positive semidefinite} if  
all eigenvalues of $A$ are nonnegative).  
The Colin de Verdi\`ere number $\mu(G)$ is defined to be the maximum nullity
among symmetric matrices $A=[a_{ij}]\in\sym(G)$ such that $A$ satisfies the Strong Arnold Hypothesis, $A$ has exactly one negative eigenvalue, and 
 $A$ is a generalized Laplacian (i.e., for all $i\ne j$, $a_{ij}\le 0$).  

Asymptotic comparisons arise naturally in our work, and since some of the notation has more than one interpretation in the papers cited, we state our notation here. Let $f$ and $g$ be 
real valued functions of $\N$. 
We say:
  $f$ is $o(g)$  if
$\lim_{n\to\infty}\left|\frac{f(n)}{g(n)}\right|=0$;
  $f$ is $O(g)$  if  there exist
constants $C,N$ such that
$|f(n)|\le C|g(n)|$ for all $n\ge N$;
 $f$ is $\Omega(g)$  if  there exist
constants $C,N$ such that
$|f(n)|\ge C|g(n)|$ for all $n\ge N$;
  $f$ is $\Theta(g)$  if $f$ is $O(g)$ and $\Omega(g)$. \vspace{-3mm}

\subsection{
Decompositions and non-degeneracy}\label{ssNGdefs}

A {\em random $r$-decomposition} is an $r$-decomposition of $K_n$ in which each $G_i$ is a $G(n,\frac 1 r)$ random graph (with the understanding that necessarily the $G_i$ are dependent).

\begin{lem}\label{randomrlem}   
For fixed $r$ and $n$ large, a random $r$-decomposition exists.
\end{lem}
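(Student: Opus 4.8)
The plan is to exhibit an explicit joint construction whose $r$ marginals are each distributed as $G(n,\tfrac1r)$. Independently for every edge $e\in E(K_n)$, draw a label $\ell(e)$ uniformly at random from $\{1,\dots,r\}$, and set $G_i=([n],E_i)$ with $E_i=\{e : \ell(e)=i\}$. Since the $E_i$ partition $E(K_n)$, this is an $r$-decomposition of $K_n$; the $G_i$ are manifestly dependent (indeed $\sum_i|E_i|=\binom n2$ is deterministic), exactly as the definition anticipates.

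Next I would pin down the marginal law of a single part. Fix $i\in\{1,\dots,r\}$ and, for each $e\in E(K_n)$, put $X_e=\mathbf{1}[\ell(e)=i]$. The variables $\{X_e\}_e$ are functions of the mutually independent labels $\{\ell(e)\}_e$ — one label per edge — so they are themselves mutually independent, and each is Bernoulli with $\Pr[X_e=1]=\Pr[\ell(e)=i]=\tfrac1r$. Hence $E_i$ is a random subset of $E(K_n)$ in which every edge is present independently with probability $\tfrac1r$, i.e. $G_i$ has precisely the distribution of $G(n,\tfrac1r)$. This is the only point requiring any care: although the $G_i$ are globally coupled, conditioning attention on one colour class disentangles the edge-indicators because the labels are drawn independently across edges.

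I expect essentially no obstacle here. The construction works verbatim for every $n\ge1$ and every $r\ge1$, so the hypothesis ``$n$ large'' is not needed for this particular argument and merely reflects the regime in which the lemma is applied. If in the later applications one additionally wants all $r$ parts to \emph{simultaneously} possess a property that holds asymptotically almost surely for $G(n,\tfrac1r)$ — for example a concentration estimate for $\tw$, $\la$, $\pw$, $\ppw$, or for a Colin de Verdi\`ere type parameter — then one takes the decomposition above and applies a union bound over the $r=O(1)$ parts: each bad event has probability $o(1)$, so for $n$ large there is a decomposition in which every $G_i$ enjoys the property. That union-bound step is the only place where ``$n$ large'' would genuinely be used, and it is irrelevant to the bare existence statement asserted by the lemma.
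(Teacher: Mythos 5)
Your construction is exactly the paper's: assign each edge an independent uniform label in $\{1,\dots,r\}$ and let $G_i$ collect the edges with label $i$, so each part is marginally $G(n,\tfrac1r)$. The proposal is correct and matches the paper's proof, merely spelling out the marginal-distribution check and noting that the ``$n$ large'' hypothesis is not actually needed here.
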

\bpf Each edge of $K_n$ is assigned by an independent  $r$-valued variable that determines which of the graphs $G_i, i=1,\dots,r$, is assigned the edge.  Thus $G_i, i=1,\dots,r$, is an $r$-decomposition of $K_n$ and each $G_i$ has probability $\frac 1 r$ for each edge, i.e., $G_i$ is $G(n,\frac 1 r)$. \vspace{-5pt} \epf

Some of the results we use  require a graph to have an edge.  A {\em non-degenerate $r$-decomposition} of $K_n=([n],E)$ is a partition of the edges as the edge sets of $r$ spanning subgraphs $G_i=([n],E_i), i=1,\dots,r$ with  $E_i\ne \emptyset$ for $i=1,\dots,r$.  We define non-degenerate versions of the four bounds, $\ngsun{\beta}(r;n), \ngsln{\beta}(r;n), \ngpun{\beta}(r;n), $ and $\ngpln{\beta}(r;n)$, where the maximum or minimum is taken over all non-degenerate $r$-decompositions of $K_n$.  

Suppose $G_1,G_2,\ldots, G_r$ is an $r$-decomposition on $n$ vertices.  If in this decomposition there are exactly $\ell$ graphs each having at least one edge, we may assume that $G_{\ell+1}=\cdots =G_r=\overline{K_n}$ are empty graphs, so that $G_1,\ldots, G_{\ell}$ form a non-degenerate $\ell$-decomposition.    

\begin{obs}\label{rem:degenerate}
Assuming  $\beta(\overline{K_{n}})=0$ or $\beta(\overline{K_{n}})=1$, the relationship between degenerate and non-degenerate sum bounds is:  \vspace{-2mm} 
\[\ngsu\beta(r;n)=\left\{\begin{array}{cc}
\displaystyle \max_{1\leq \ell\leq n}\{\ngsun{\beta}(\ell;n)+(r-\ell)\beta(\overline{K_{n}})\} & \text{if }\beta(\overline{K_n})=1,\\\vspace{-1mm}
\max_{1\leq \ell\leq n}\{\ngsun{\beta}(\ell;n)\} & \text{if }\beta(\overline{K_n})=0;\\\vspace{-2mm}
\end{array}\right.\vspace{-2mm}\] 
\[\ngsl\beta(r;n)=\left\{\begin{array}{cc}
\displaystyle \min_{1\leq \ell\leq n}\{\ngsln{\beta}(\ell;n)+(r-\ell)\beta(\overline{K_{n}})\} & \text{if }\beta(\overline{K_n})=1,\\\vspace{-1mm}
\min_{1\leq \ell\leq n}\{\ngsln{\beta}(\ell;n)\} & \text{if }\beta(\overline{K_n})=0;\\\vspace{-1mm}
\end{array}\right.\vspace{-2mm}\] 
\end{obs}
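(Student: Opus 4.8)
The statement to prove is Observation~\ref{rem:degenerate}, which expresses the degenerate sum bounds $\ngsu\beta(r;n)$ and $\ngsl\beta(r;n)$ in terms of the non-degenerate bounds $\ngsun{\beta}(\ell;n)$, under the assumption that $\beta(\overline{K_n})\in\{0,1\}$.

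\medskip

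The plan is to argue directly from the definitions by the "collapsing" device already introduced in the paragraph preceding the Observation. First I would prove the sum upper bound identity; the sum lower bound is completely symmetric (replace $\max$ by $\min$ everywhere). Fix an $r$-decomposition $G_1,\dots,G_r$ of $K_n$ and let $\ell$ be the number of $G_i$ that have at least one edge; note $0\le\ell\le n$, but if $\ell=0$ then $E(K_n)=\emptyset$, impossible for $n\ge 2$, so $1\le\ell\le n$ for $n$ large enough. (If one wants to cover $n=1$ as well, the convention is that the only decomposition has all $G_i$ empty and everything is $0$; I would either note this edge case or assume $n\ge 2$.) Reindex so that $G_1,\dots,G_\ell$ each have an edge and $G_{\ell+1}=\cdots=G_r=\overline{K_n}$. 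Then $G_1,\dots,G_\ell$ is a non-degenerate $\ell$-decomposition of $K_n$, so
\[
\beta(G_1)+\cdots+\beta(G_r)=\bigl(\beta(G_1)+\cdots+\beta(G_\ell)\bigr)+(r-\ell)\beta(\overline{K_n})\le \ngsun{\beta}(\ell;n)+(r-\ell)\beta(\overline{K_n}).
\]
Taking the maximum over all $r$-decompositions, and noting $\ell$ ranges over a subset of $\{1,\dots,n\}$, gives $\ngsu\beta(r;n)\le \max_{1\le\ell\le n}\{\ngsun{\beta}(\ell;n)+(r-\ell)\beta(\overline{K_n})\}$.

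\medskip

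For the reverse inequality, fix $\ell\in\{1,\dots,n\}$ and take a non-degenerate $\ell$-decomposition $H_1,\dots,H_\ell$ of $K_n$ achieving $\ngsun{\beta}(\ell;n)$. Pad it with $r-\ell$ empty graphs $H_{\ell+1}=\cdots=H_r=\overline{K_n}$ to obtain a (degenerate) $r$-decomposition of $K_n$; here I use $\ell\le n\le r$... actually I only need $\ell\le r$, which holds since we may restrict to $\ell\le\min(r,n)$ in the maximum (for $\ell>r$ there is no $r$-decomposition with $\ell$ nonempty parts, but also $\ngsun\beta(\ell;n)$ with $\ell>n$ is vacuous since an $\ell$-decomposition of $K_n$ with all parts nonempty needs $\ell\le\binom n2$; the clean fix is to observe that when $\ell>r$ the corresponding term does not arise as an actual $r$-decomposition, so the displayed maximum should really be read over $1\le\ell\le\min(r,n)$, or one checks the bound is not improved by larger $\ell$ — I would add a one-line remark clarifying this). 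This $r$-decomposition has sum $\ngsun{\beta}(\ell;n)+(r-\ell)\beta(\overline{K_n})$, so $\ngsu\beta(r;n)\ge \ngsun{\beta}(\ell;n)+(r-\ell)\beta(\overline{K_n})$ for each such $\ell$, and hence $\ngsu\beta(r;n)\ge\max_\ell\{\cdots\}$. Combining the two inequalities proves the upper-bound formula. When $\beta(\overline{K_n})=0$ the term $(r-\ell)\beta(\overline{K_n})$ vanishes, giving the second case. The lower-bound formula follows verbatim with all inequalities and $\max/\min$ reversed: every $r$-decomposition collapses to a non-degenerate $\ell$-decomposition giving $\beta(G_1)+\cdots+\beta(G_r)\ge\ngsln\beta(\ell;n)+(r-\ell)\beta(\overline{K_n})\ge\min_\ell\{\cdots\}$, and each $\ell$-term is realized by padding.

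\medskip

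The main (and only) subtlety I anticipate is bookkeeping on the range of $\ell$: the expression $\ngsun\beta(\ell;n)$ is only meaningful when $K_n$ admits a non-degenerate $\ell$-decomposition, i.e.\ when $1\le\ell\le\min\{r,\binom n2\}$, so the index set "$1\le\ell\le n$" in the statement is implicitly intersected with "$\ell\le r$" (and for the regime of interest, large $n$ with $r$ fixed, $n\ge r$ so this is automatic — in fact the formula as written is fine because terms with $\ell>r$ simply never occur as genuine $r$-decompositions and the padding construction requires $\ell\le r$). There is no hard analytic content; the proof is a short, self-contained double-inequality argument, and I would present it in about half a page.
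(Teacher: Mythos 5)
Your proof is correct and follows exactly the route the paper intends: the paper gives no formal proof of this Observation, but the paragraph immediately preceding it describes precisely your ``collapse to a non-degenerate $\ell$-decomposition, then pad with copies of $\overline{K_n}$'' double inequality. Your extra bookkeeping on the admissible range of $\ell$ (effectively $1\le\ell\le\min\{r,\binom n2\}$, with the paper's written index set $1\le\ell\le n$ being harmless since spurious values of $\ell$ never arise from actual $r$-decompositions) is a reasonable clarification rather than a deviation.
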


\begin{obs}\label{rem:prodnondegen}
Assuming  $\beta(\overline{K_{n}})=0$ or $\beta(\overline{K_{n}})=1$, the relationship between degenerate and non-degenerate product bounds is:  \vspace{-2mm} 
\[\ngpu\beta(r;n)=\left\{\begin{array}{cc}
\displaystyle \max_{1\leq \ell \leq r}\{\ngpun{\beta}(\ell;n)\} & \text{if }\beta(\overline{K_n})=1,\\\vspace{-1mm}
\ngpun{\beta}(r;n) & \text{if }\beta(\overline{K_n})=0;\\\vspace{-2mm}
\end{array}\right.\vspace{-2mm}\] 
\[\ngpl\beta(r;n)=\left\{\begin{array}{cc}
\displaystyle \min_{1\leq \ell \leq r}\{\ngpln{\beta}(\ell;n)\} & \text{if }\beta(\overline{K_n})=1,\\\vspace{-1mm}
0 & \text{if }\beta(\overline{K_n})=0.\\
\end{array}\right. \vspace{-3mm}\]
\end{obs}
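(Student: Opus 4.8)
The plan is to argue exactly as in the proof of Observation~\ref{rem:degenerate}, using the correspondence spelled out in the paragraph immediately preceding it: an $r$-decomposition $G_1,\dots,G_r$ of $K_n$ in which exactly $\ell$ of the parts have an edge can be relabeled so that $G_1,\dots,G_\ell$ is a non-degenerate $\ell$-decomposition and $G_{\ell+1}=\cdots=G_r=\overline{K_n}$, while conversely every non-degenerate $\ell$-decomposition with $\ell\le r$ extends to an $r$-decomposition by appending $r-\ell$ copies of $\overline{K_n}$. Since $K_n$ has an edge for $n\ge 2$, the number of nonempty parts satisfies $1\le\ell\le r$. All four sub-cases then reduce to reading off what this padding does to a product, together with the fact that $\beta$ is nonnegative.

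First I would treat the product upper bound. If $\beta(\overline{K_n})=1$, appending $r-\ell$ empty parts multiplies the product by $1^{r-\ell}=1$, so every $r$-decomposition realizes a value $\beta(G_1)\cdots\beta(G_\ell)\le\ngpun{\beta}(\ell;n)$ for some $1\le\ell\le r$, and conversely each $\ngpun{\beta}(\ell;n)$ with $\ell\le r$ is realized by padding a maximizing non-degenerate $\ell$-decomposition; taking the maximum over all $r$-decompositions on one side and over $\ell$ on the other yields $\ngpu{\beta}(r;n)=\max_{1\le\ell\le r}\ngpun{\beta}(\ell;n)$. If $\beta(\overline{K_n})=0$, any $r$-decomposition with $\ell<r$ has a factor equal to $0$, hence product $0\le\ngpun{\beta}(r;n)$ (using $\beta\ge 0$), while the non-degenerate $r$-decompositions are themselves among the $r$-decompositions; hence $\ngpu{\beta}(r;n)=\ngpun{\beta}(r;n)$.

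The product lower bound is handled symmetrically. If $\beta(\overline{K_n})=1$, the same padding identity shows each $r$-decomposition realizes $\beta(G_1)\cdots\beta(G_\ell)\ge\ngpln{\beta}(\ell;n)$ for some $1\le\ell\le r$, and each $\ngpln{\beta}(\ell;n)$ is realized by padding a minimizing non-degenerate $\ell$-decomposition, so $\ngpl{\beta}(r;n)=\min_{1\le\ell\le r}\ngpln{\beta}(\ell;n)$. If $\beta(\overline{K_n})=0$ and $r\ge 2$, the decomposition with $G_1=K_n$ and $G_2=\cdots=G_r=\overline{K_n}$ has product $0$, and since $\beta$ is nonnegative every product is $\ge 0$, so $\ngpl{\beta}(r;n)=0$.

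I do not anticipate a genuine obstacle here: this is a bookkeeping argument parallel to Observation~\ref{rem:degenerate}, and the only place where a word of care is needed is the boundary. The last case tacitly uses $r\ge 2$ (to guarantee at least one available empty part), and the maxima and minima over $1\le\ell\le r$ implicitly require that a non-degenerate $\ell$-decomposition of $K_n$ exists for each such $\ell$; taking $n$ large (or restricting $\ell$ to those values for which $K_n$ admits a non-degenerate $\ell$-decomposition) removes any ambiguity, and for the asymptotic statements of the paper this is automatic.
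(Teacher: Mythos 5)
Your proposal is correct and is essentially the argument the paper intends: the paper states this as an Observation without proof, relying on the padding correspondence described in the paragraph before Observation~\ref{rem:degenerate}, which is exactly the bookkeeping you carry out. Your added caveats about $r\ge 2$ and the existence of non-degenerate $\ell$-decompositions are sensible refinements of what the paper leaves implicit, not a different approach.
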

 
 In the case of the product lower bound for parameters having $\beta(\overline{K_{n}})=0$, the non-degenerate parameter is clearly the more interesting one.   

\section{Nordhaus-Gaddum Sum Upper Bounds}\label{sNGsumupper}

For all the parameters  $\beta\in\{\h,\mu,\nu,\xi, \tw,\la,\pw,\ppw \}$, $\beta(G)\le n$, so $\ngsu{\beta}(r;n)\le rn$.  For tree-width and its variants largeur d'arborescence, path-width and proper path-width, 
this is essentially the best we can do.  

\begin{thm}\label{thmtwupper}  For a fixed $r\ge 2$, $\ngsu{\tw}(r;n)=rn-o(n)=\ngsu{\la}(r;n)= \ngsu{\pw}(r;n)= \ngsu{\ppw}(r;n)$. \end{thm}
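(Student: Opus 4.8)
The plan is to prove the single lower bound $\ngsu{\tw}(r;n)\ge rn-o(n)$ and then let the inequality chain $\tw(G)\le\la(G)\le\pw(G)\le\ppw(G)$ do the rest. Since $\beta(G)\le n$ for each $\beta\in\{\tw,\la,\pw,\ppw\}$, the opening observation of this section gives $\ngsu{\beta}(r;n)\le rn$ for all four parameters, while the pointwise chain immediately yields $\ngsu{\tw}(r;n)\le\ngsu{\la}(r;n)\le\ngsu{\pw}(r;n)\le\ngsu{\ppw}(r;n)$. Thus once $\ngsu{\tw}(r;n)\ge rn-o(n)$ is known, all four quantities are trapped between $rn-o(n)$ and $rn$, which is the assertion.

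For the lower bound I would use a random $r$-decomposition $G_1,\dots,G_r$ of $K_n$, which exists for $n$ large by Lemma \ref{randomrlem}, so that each $G_i$ is distributed as $G(n,\tfrac1r)$. It suffices to show that for a constant $p\in(0,1)$ one has $\tw(G(n,p))\ge n-O(\log n)$ with probability tending to $1$ as $n\to\infty$: a union bound over the $r$ (fixed) colours then shows that for all large $n$ there is an $r$-decomposition with $\tw(G_i)\ge n-O(\log n)$ for every $i$, hence with $\sum_{i=1}^r\tw(G_i)\ge r\bigl(n-O(\log n)\bigr)=rn-o(n)$. (This is the strategy used for $r=2$ and tree-width in \cite{JW12}; one could alternatively just cite the known fact that $\tw(G(n,p))=n-\Theta(\log n)$ whp for constant $p$.)

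To prove $\tw(G(n,p))\ge n-O(\log n)$ whp, fix a constant $C$ large enough, in terms of $p$, that a first-moment estimate gives: whp every two disjoint vertex sets of size at least $C\log n$ are joined by an edge of $G(n,p)$. Condition on this event, together with $G(n,p)$ being connected. I would then exhibit a large bramble: let $\mathcal{B}$ be the collection of all $B\subseteq V$ with $|B|=\lceil C\log n\rceil$ for which $G[B]$ is connected. Any two members of $\mathcal{B}$ touch (if disjoint they are joined by an edge, by the conditioning), so $\mathcal{B}$ is a bramble. For its order, I would show that every set $T$ with $|T|\ge 8C\log n$ contains a member of $\mathcal{B}$: the conditioning forbids two disjoint components of $G[T]$ of size $\ge C\log n$, so if the largest component of $G[T]$ had fewer than $|T|/2$ vertices, the remaining components (each of size $<C\log n$) could be two-coloured into two sets of size $\ge C\log n$ with no edge between them, a contradiction; hence $G[T]$ has a connected component on $\ge|T|/2\ge C\log n$ vertices, and any connected graph on $\ge\lceil C\log n\rceil$ vertices has a connected induced subgraph of that exact size. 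Consequently every hitting set of $\mathcal{B}$ misses fewer than $8C\log n$ vertices, so $\mathcal{B}$ has order $>n-8C\log n$, and the bramble characterization of treewidth gives $\tw(G(n,p))\ge n-8C\log n-1=n-o(n)$.

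The main obstacle is exactly this last step: the cheap lower bounds on treewidth (degeneracy, clique number, Hadwiger number/clique minors, or the size of a balanced separator) all fall far short of $n-o(n)$ for $G(n,p)$, because a random graph on $n$ vertices with constant edge probability has only logarithmically large cliques and logarithmically large clique minors. One genuinely needs the bramble (equivalently, a careful separator argument) combined with the expansion-type property that all pairs of logarithmically-large vertex sets are adjacent. Everything else — the inequality chain, the trivial upper bound $rn$, and the union bound over the $r$ colours — is routine.
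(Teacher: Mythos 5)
Your proposal is correct and follows the same route as the paper: a random $r$-decomposition gives the lower bound $\ngsu{\tw}(r;n)\ge rn-o(n)$, and the trivial bound $\beta(G)\le n$ together with the chain $\tw\le\la\le\pw\le\ppw$ squeezes all four quantities. The only difference is that where the paper simply cites \cite{PS} for $\tw(G(n,p))=n-o(n)$ (and invokes linearity of expectation rather than a union bound), you supply a self-contained and correct proof of that fact via the bramble/expansion argument, which is a nice bonus but not a different method.
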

\bpf  For tree-width, this follows from the fact that for fixed $p>0$, $\tw(G(n,p))=n-o(n)$  \cite{PS},  the   existence of a random $r$-decomposition (Lemma \ref{randomrlem}), and the linearity of expectation (see, for example,  \cite[Lemma 8.5.7]{West}). The remaining statements follow from the fact that $\ppw(G)\ge\pw(G)\ge\la(G)\ge \tw(G)$ for all $G$. \epf

Next we consider the Colin de Verdi\`ere type parameters and Hadwiger number.    
\begin{thm}\label{thmrxiupper}  Let $\beta\in\{\xi,\nu,\mu\}$.  For a fixed $r\ge 2$ and $n\ge 2\sqrt r$,
$ \ngsu{\beta}(r;n)\le\sqrt r n$.  For a fixed $r\ge 2$ and $n\ge 2\sqrt r$,    $\ngsu{\h}(r;n)\le \sqrt r\, n+r$.
\end{thm}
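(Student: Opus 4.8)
The plan is to bound $\beta(G_i)$ in terms of the number of edges $m_i:=|E(G_i)|$ of each part, and then to combine these per-graph bounds using $m_1+\cdots+m_r=\binom n2$ and a Cauchy--Schwarz (convexity) argument, reserving the hypothesis $n\ge 2\sqrt r$ for mopping up lower-order terms.

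For the Hadwiger number the per-graph bound is elementary: a $K_{\h(H)}$-minor has $\binom{\h(H)}{2}$ edges and a minor never has more edges than its host, so $\binom{\h(H)}{2}\le |E(H)|$, equivalently $\h(H)^2\le 2|E(H)|+\h(H)$. Put $S=\sum_{i}\h(G_i)$. Summing over $i$ and using Cauchy--Schwarz, $S^2\le r\sum_i\h(G_i)^2\le r\bigl(2\sum_i m_i+S\bigr)=r\bigl(n(n-1)+S\bigr)$, so $S^2-rS-rn(n-1)\le 0$ and hence $S\le\tfrac r2+\tfrac12\sqrt{4rn^2-4rn+r^2}$. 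Writing $4rn^2-4rn+r^2=(2\sqrt r\,n-\sqrt r)^2+r(r-1)$ and using $\sqrt{a^2+b}\le a+b/(2a)$ with $2\sqrt r\,n-\sqrt r\ge 4r-\sqrt r\ge 3r$ (valid once $n\ge 2\sqrt r$), the correction term is at most $(r-1)/12$, and a short computation gives $S\le\sqrt r\,n+r$. Degenerate decompositions are harmless here since $\h(\overline{K_n})=1$ still obeys $\binom{1}{2}\le 0$.

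For $\beta\in\{\xi,\nu,\mu\}$ the corresponding per-graph input comes from the Strong Arnold Property. If $A\in\sym(H)$ has $\nul(A)=d$ and the Strong Arnold Property, then the rank variety $\{B\in\sRn:\rank B=n-d\}$, of codimension $\binom{d+1}{2}$ in $\sRn$, meets transversally at $A$ the subspace of symmetric matrices with the off-diagonal zero pattern of $H$, which has dimension $|E(H)|+n$; comparing dimensions yields $\binom{\beta(H)+1}{2}\le |E(H)|+n$. Sharpening this to the form $\binom{\beta(H)+1}{2}\le |E(H)|$ that the aggregation actually needs (the inequality is extremal on cliques, and the extra $+n$ slack is only consumed by a controlled class of sparse graphs), one gets $\sum_i\beta(G_i)^2\le 2\sum_i m_i=n(n-1)<n^2$; then Cauchy--Schwarz gives $\bigl(\sum_i\beta(G_i)\bigr)^2\le r\sum_i\beta(G_i)^2<rn^2$, and since $\sum_i\beta(G_i)$ is a nonnegative integer we conclude $\sum_i\beta(G_i)\le\sqrt r\,n$.

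I expect the main obstacle to be exactly that sharpening of the per-graph edge bound for $\xi,\nu,\mu$: the raw transversality count loses a full $+n$, which after aggregation would leave an additive error of order $r^{3/2}$, whereas the theorem permits none at all. Removing it --- isolating the contribution of sparse, near-bipartite graphs (such as complete bipartite graphs) that are extremal relative to their edge count from the clique-like parts that actually drive the maximum of the sum, and then verifying that $n\ge 2\sqrt r$ is precisely the slack required --- is the real content. By contrast, once the elementary bound $\binom{\h(H)}{2}\le |E(H)|$ is in hand, the Hadwiger case is a routine one-variable estimate.
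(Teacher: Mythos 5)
Your treatment of the Hadwiger number is correct and in fact more direct than the paper's: the paper deduces $\ngsu{\h}(r;n)\le\sqrt r\,n+r$ from its bound on $\nu$ via $\h(G)-1\le\nu(G)$, whereas you use only the elementary fact $\binom{\h(H)}{2}\le|E(H)|$ and a one-variable quadratic estimate. (Your correction term should be $(r-1)/6$ rather than $(r-1)/12$, but the conclusion $S\le\sqrt r\,n+r$ still follows.)

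The $\{\xi,\nu,\mu\}$ case, however, has a genuine gap, and it is exactly the one you flag yourself. The raw transversality count coming from the Strong Arnold Property gives only $\binom{\beta(H)+1}{2}\le|E(H)|+n$, and the whole argument hinges on replacing the $+n$ by something $O(1)$; you defer this ``sharpening'' with a heuristic about isolating sparse, near-bipartite extremal graphs, but never prove it. This is not a routine refinement: it is the main technical input, and the paper imports it as a cited theorem of Hall, Hogben, Martin, and Shader \cite{HHMS}, namely $|E(G)|\ge\frac{\xi(G)(\xi(G)+1)}2-1$, from which $\xi(G)^2\le 2|E(G)|$ follows for every graph with an edge; the cases $\nu$ and $\mu$ are then handled simply by $\nu(G)\le\xi(G)$ and $\mu(G)\le\xi(G)$ rather than by separate dimension counts. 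Without that sharp per-graph bound your aggregation yields only $\sum_i\beta(G_i)\le\sqrt r\,n+O(r^{3/2})$, while the theorem asserts $\sqrt r\,n$ with no additive error. A second, smaller omission: even granting the sharp bound, it fails for edgeless graphs ($\beta(\overline{K_n})=1$ while $|E|=0$), so degenerate decompositions must be handled separately; the paper does this via Observation \ref{rem:degenerate}, bounding $\max_{\ell}\{\sqrt\ell\,n+r-\ell\}\le\sqrt r\,n$, and this reduction --- not the per-graph edge bound --- is where the hypothesis $n\ge2\sqrt r$ is actually consumed.
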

\bpf 
We begin by establishing the statement for $\xi$.    
It is shown in \cite{HHMS} that for any graph $G=(V(G),E(G))$, $\ord{E(G)} \ge \frac {\xi(G)(\xi(G)+1)} 2 -1$.  In  \cite{upNG} it is noted that  this implies $\xi(G)^2  \le  2\ord {E(G)}$ for every graph $G$ that has an edge.  Let $G_1,\dots,G_r$ be a non-degenerate $r$-decomposition of $K_n$.  Let $\bone$ denote the all ones vector of length $r$ and $\bxi:=[\xi(G_1),\dots,\xi(G_r)]^T$.  Then 
\[\|\bxi\|_2^2=\xi(G_1)^2+\dots+\xi(G_r)^2\le 2(|E(G_1)|+\dots+|E(G_r)|)\le 2\left( \frac {n^2} 2\right)=n^2.\]
That is, $\|\bxi\|_2\le n$.  By the Cauchy-Schwarz inequality,
\[\xi(G_1)+\dots+\xi(G_r)=\bone^T\bxi\le \|\bone\|_2\|\bxi\|_2\le \sqrt r\, n,\]
so $ \ngsun{\xi}(r;n)\le\sqrt r\, n$.

Now assume $n\ge 2\sqrt r$.  By Observation \ref{rem:degenerate}, 
\[\ngsu{\xi}(r;n)=\max_{1\leq \ell\leq n}\{\ngsun{\xi}(\ell;n)+(r-\ell)\xi(\overline{K_{n}})\}\leq \max_{1\leq \ell\leq n}\{\sqrt \ell \,n+r-\ell\}\le \sqrt r\,n.\]
The last inequality follows from the assumption that $n\ge 2\sqrt r$ by elementary algebra.

Since $\nu(G)\le \xi(G)$ and $\mu(G)\le \xi(G)$ for all graphs $G$, $\ngsu{\nu}(r;n)\le\ngsu{\xi}(r;n)$ and $\ngsu{\mu}(r;n)\le\ngsu{\xi}(r;n)$ for all $r\ge 2$ and $n$.  
Since  $\h(G)-1\le\nu(G) $ for all graphs $G$, $\ngsu{\h}(r;n)\le\ngsu{\nu}(r;n)+r$ for all $r\ge 2$ and $n$. 
\epf

\begin{rem}\label{rem:n.vs.m}  For any minor monotone parameter $\beta$, 
$n\ge m$ implies $\ngsu{\beta}(r;n)\ge \ngsu{\beta}(r;m)$, because we can take a decomposition  $G_1,\dots,G_r$ that realizes $\ngsu{\beta}(r;m)$,  add the extra vertices to every $G_i$, and allocate the extra edges among the $G_i$ however we choose without lowering $\beta(G_1)+\dots+\beta(G_r)$.
\end{rem}

A {\em triangular number} is a number of the form $\frac{t(t+1)} 2$ for some positive integer $t$. 
For an arbitrary positive integer $r$, define the {\em triangular root}  of $r$ by $\trt(r):= \sqrt{2  r+\frac{1}{4}}-\frac{1}{2}$,  so  $t=\lc\trt(r)\rc$ is equivalent to $\frac{t(t+1)} 2< r\le \frac{(t+1)(t+2)} 2$. 

\begin{thm}\label{thmretalower}  $\null$ \vspace{-3mm}
\ben[(i)]
\item\label{thmretalower12} Fix $r\ge 2$  and define   $t:=\lc\trt(r)\rc$.  Then for  every $s\ge 1$ and  $n:=ts$,\vspace{-2mm}  \[\frac{r}{t}n+(r-t)\leq \ngsu{\h}(r;n).\vspace{-2mm}\]
\item\label{thmretalowernew} For a fixed $r\ge 2$  and $n\to\infty$, \vspace{-2mm}
\[\frac r {\lc \sqrt{2  r+\frac{1}{4}}-\frac{1}{2}\rc}\, n-o(n)\le  \ngsu{\h}(r;n).\vspace{-2mm}\]
\een\end{thm}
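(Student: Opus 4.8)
The plan is to prove part~\eqref{thmretalower12} by an explicit $r$-decomposition of $K_{ts}$, and then to obtain part~\eqref{thmretalowernew} from it by rounding $n$ down to a multiple of $t$ and invoking the monotonicity of $\ngsu{\h}(r;\cdot)$ recorded in Remark~\ref{rem:n.vs.m}. First I would unwind the definition $\trt(r)=\sqrt{2r+\frac{1}{4}}-\frac{1}{2}$ to record that $t=\lc\trt(r)\rc$ is equivalent to $\frac{t(t-1)}{2}<r\le\frac{t(t+1)}{2}$; in particular $1\le r-\frac{t(t-1)}{2}\le t$ and $\frac{t(t-1)}{2}\ge r-t$, and also $t\ge2$ whenever $r\ge2$.

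For part~\eqref{thmretalower12}, fix $s\ge1$, set $n=ts$, and partition $[n]$ into blocks $V_1,\dots,V_t$ of size $s$. Then $E(K_n)$ is the disjoint union of $t$ ``diagonal'' pieces (the $i$-th inducing a clique $K_s$ on $V_i$) together with $\frac{t(t-1)}{2}$ ``off-diagonal'' pieces (for each pair $i<j$, the complete bipartite graph $K_{s,s}$ between $V_i$ and $V_j$). I would build the decomposition by letting each off-diagonal piece be its own graph and partitioning the $t$ diagonal pieces into $r-\frac{t(t-1)}{2}$ nonempty groups, the union of each group being one further graph; this produces exactly $r$ graphs, precisely because $1\le r-\frac{t(t-1)}{2}\le t$. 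Each off-diagonal graph has a subgraph isomorphic to $K_{s,s}$, and $\h(K_{s,s})\ge s+1$, since $s-1$ edges of a perfect matching used as two-vertex branch sets, together with the two endpoints of the remaining matching edge as singleton branch sets, exhibit a $K_{s+1}$-minor; each diagonal graph has a subgraph $K_s$, hence Hadwiger number at least $s$. Summing over all $r$ graphs, the Hadwiger total is at least $\frac{t(t-1)}{2}(s+1)+\bigl(r-\frac{t(t-1)}{2}\bigr)s=rs+\frac{t(t-1)}{2}\ge rs+(r-t)=\frac{r}{t}n+(r-t)$, which is the claimed bound. (This in fact proves the slightly stronger estimate $\ngsu{\h}(r;ts)\ge rs+\frac{t(t-1)}{2}$.)

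For part~\eqref{thmretalowernew}, given $n$ large, set $s=\lf n/t\rf$ and $m=ts\le n$. Part~\eqref{thmretalower12} gives $\ngsu{\h}(r;m)\ge rs+(r-t)$, and since $\h$ is minor monotone, Remark~\ref{rem:n.vs.m} gives $\ngsu{\h}(r;n)\ge\ngsu{\h}(r;m)$; combined with $rs=r\lf n/t\rf\ge\frac{r}{t}n-r$ this yields $\ngsu{\h}(r;n)\ge\frac{r}{t}n-t=\frac{r}{t}n-o(n)$, since $t$ is constant. I do not anticipate a real obstacle: the one genuinely creative step is taking $t$ equal-sized blocks, which is what makes the count $\frac{t(t-1)}{2}$ of off-diagonal pieces plus $t$ diagonal pieces straddle $r$; after that the proof reduces to the elementary bound $\h(K_{s,s})\ge s+1$ and the routine bookkeeping that $\frac{t(t-1)}{2}$ and $r-\frac{t(t-1)}{2}$ lie in the asserted ranges. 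The only cosmetic wrinkle is that when $s=1$ some diagonal graphs are edgeless; this is harmless, since the quantity being bounded is the (degenerate-allowed) sum upper bound $\ngsu{\h}$ rather than its non-degenerate variant, and $\h(\overline{K_n})=1$ still meets the ``$\ge s$'' bound.
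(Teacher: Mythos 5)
Your proof is correct and follows essentially the same route as the paper: both parts rest on partitioning $[ts]$ into $t$ blocks of size $s$ (with $t=\lc\trt(r)\rc$ chosen so that the $t$ diagonal cliques plus $\frac{t(t-1)}{2}$ bipartite pieces straddle $r$), the bounds $\h(K_s)\ge s$ and $\h(K_{s,s})\ge s+1$ via contracting a near-perfect matching, and, for part (ii), rounding $n$ down to a multiple of $t$ and invoking Remark~\ref{rem:n.vs.m}. The only difference is how the pieces are allocated among the $r$ graphs: the paper keeps the $t$ diagonal cliques as separate graphs, selects $r-t$ bipartite pieces, and dumps the leftovers into $G_1$, whereas you keep all $\frac{t(t-1)}{2}$ bipartite pieces separate and merge the diagonal cliques into $r-\frac{t(t-1)}{2}$ groups, which yields the marginally stronger constant term $rs+\frac{t(t-1)}{2}\ge rs+(r-t)$ but the same asymptotics.
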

\bpf 

\eqref{thmretalower12}: Let $n=ts$.  Partition the vertices into  sets $V_i,i=1,\dots,t$ of $s$ vertices each.  For $i=1,\dots,t$ define $H_i$ to be the subgraph of $K_n$ induced by $V_i$ (isomorphic to $K_s$) together with the $n-s$ vertices not in $V_i$ (as isolated vertices).     For $i=1,\dots,t-1, j=i+1,\dots,t$ define $H_{i,j}$ to be the subgraph of $K_n$ consisting of all $n$ vertices and the edges having one end-point in $V_i$ and the other in $V_j$; $H_{i,j}$ is isomorphic to $K_{s,s}$ and isolated vertices.     This defines $\frac{t(t+1)}{2}$ subgraphs.  By the choice of $t$, $r\le\frac{t(t+1)} 2$.  This allows us to assign $G_i=H_i$ for $i=1,\ldots ,t$ and then pick $r-t$ distinct $H_{j,k}$'s as $\{G_i\}_{i=t+1}^r$.  Put every edge not in any of $\{G_i\}_{i=1}^t$ in $G_1$ so that $\{G_i\}_{i=1}^t$ forms an $r$-decomposition.  Now $\h(G_i)\ge \h(K_s)=s$ for $i=1,\ldots ,t$ and $\h(G_i)\geq \h(K_{s,s})=s+1$ for $s=t+1,\ldots ,r$ (with the latter done by contracting a matching of cardinality $s-1$ ).  Therefore,\vspace{-2mm}
\[\ngsu{\h}(r;n)\ge \sum_{i=1}^r\h(G_i)\ge ts+(r-t)(s+1)=rs+(r-t) \ge
\frac{r}{t} n+(r-t).\vspace{-2mm}\]

\eqref{thmretalowernew}: 
Define  $t:=\lc \trt(r)\rc$, and for $n\ge  t$, define  $q:=\lf \frac n {t}\rf$ and $m:=qt$.  Note that $n-m<t$.   By Remark \ref{rem:n.vs.m} and part  \eqref{thmretalower12},
\[\ngsu{\h}(r;n)\ge \ngsu{\h}(r;m)\ge \frac r t m = \frac r t n - \frac r t (n-m)\ge \frac r t n -  r=\frac r t n - o(n).\qedhere\]

\epf

Since $\h(G)-1\le\mu(G)\le \xi(G)$, $\ngsu{\h}(r;n)-r\le\ngsu{\mu}(r;n)\le\ngsu{\xi}(r;n)$ (and similarly for $\nu$),  the next   corollary is immediate.
\begin{cor}\label{corrnumulower} 
Let $\beta\in\{\mu,\nu,\xi\}$.   \vspace{-2mm}
\ben[(i)]
\item Fix $r\ge 2$  and define   $t:=\lc\trt(r)\rc$.  Then for  every $s\ge 1$ and  $n:=ts$,\vspace{-2mm}  \[\frac{r}{t}n-t\leq \ngsu{\beta}(r;n).\vspace{-2mm}\]
\item\label{corrnumulower3} For a fixed $r\ge 2$  and $n\to\infty$, \vspace{-2mm}
\[\frac r {\lc \sqrt{2  r+\frac{1}{4}}-\frac{1}{2}\rc}\, n-o(n)\le  \ngsu{\beta}(r;n).\vspace{-2mm}\]
\een \end{cor}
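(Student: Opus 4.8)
The natural route is to reduce Corollary~\ref{corrnumulower} to Theorem~\ref{thmretalower} via the parameter inequalities $\h(G)-1\le\mu(G)\le\xi(G)$ and $\h(G)-1\le\nu(G)\le\xi(G)$, which hold whenever $G$ has an edge. Concretely, the plan is to first establish the comparison $\ngsu{\h}(r;n)-r\le\ngsu{\beta}(r;n)$ for $\beta\in\{\mu,\nu,\xi\}$, and then substitute the two lower bounds on $\ngsu{\h}(r;n)$ supplied by Theorem~\ref{thmretalower}.

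For the comparison, I would take an $r$-decomposition $G_1,\dots,G_r$ of $K_n$ attaining $\ngsu{\h}(r;n)=\sum_{i=1}^r\h(G_i)$, which exists since there are only finitely many $r$-decompositions of $K_n$. For each $i$ one has $\beta(G_i)\ge\h(G_i)-1$: if $G_i$ has an edge this is one of the cited inequalities, while if $G_i$ is edgeless then $\h(G_i)=1$ and $\beta(G_i)\ge 0$, so the inequality holds again. Summing gives $\ngsu{\beta}(r;n)\ge\sum_{i=1}^r\beta(G_i)\ge\sum_{i=1}^r(\h(G_i)-1)=\ngsu{\h}(r;n)-r$, with the analogous argument covering $\nu$.

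Part~(i) then follows by plugging in Theorem~\ref{thmretalower}\eqref{thmretalower12}: for $t:=\lc\trt(r)\rc$ and $n:=ts$ we get $\ngsu{\beta}(r;n)\ge\ngsu{\h}(r;n)-r\ge\bigl(\tfrac{r}{t}n+(r-t)\bigr)-r=\tfrac{r}{t}n-t$. Part~(ii) follows the same way from Theorem~\ref{thmretalower}\eqref{thmretalowernew}: $\ngsu{\beta}(r;n)\ge\ngsu{\h}(r;n)-r\ge\tfrac{r}{\lc\trt(r)\rc}n-o(n)-r=\tfrac{r}{\lc\trt(r)\rc}n-o(n)$, absorbing the additive constant $r$ into the $o(n)$ term.

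There is no genuinely hard step here: the entire corollary is a one-line consequence of Theorem~\ref{thmretalower} once the $\h$-versus-$\beta$ comparison is recorded. The only point requiring a moment's care is that the cited bound $\h(G)-1\le\beta(G)$ is stated only for graphs with an edge, so in passing from an optimal $\h$-decomposition to a bound for $\beta$ one must separately (and trivially) account for any edgeless parts $G_i$; alternatively, one can sidestep this by applying $\beta(G_i)\ge\h(G_i)-1$ directly to the explicit decomposition into copies of $K_s$ and $K_{s,s}$ (plus isolated vertices) constructed in the proof of Theorem~\ref{thmretalower}\eqref{thmretalower12}, all of whose parts carry an edge once $s\ge 2$, and to handle the residual case $s=1$ by hand.
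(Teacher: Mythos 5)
Your proposal is correct and matches the paper's argument: the paper likewise derives $\ngsu{\h}(r;n)-r\le\ngsu{\beta}(r;n)$ from $\h(G)-1\le\mu(G)\le\xi(G)$ (and similarly for $\nu$) and then reads off both parts from Theorem~\ref{thmretalower}. Your extra care about edgeless parts $G_i$ is a harmless refinement of a step the paper treats as immediate.
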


  Thus, $\frac r {\lc \sqrt{2  r+\frac{1}{4}}-\frac{1}{2}\rc}   \le  \limsup_{n\to\infty}\frac {\ngsu{\beta}(r;n)}n\le  \sqrt{r}$ for $\beta\in\{\xi,\nu,\mu,\eta\}$.  In Table \ref{tabsumupper} we provide the values of these lower and upper bounds for small $r$.  
  \vspace{-2mm}

\begin{table}[h!]\caption{Lower and upper bounds for $\displaystyle \limsup_{n\to\infty}\frac {\ngsu{\beta}(r;n)}n$ when $\beta\in\{\xi,\nu,\mu,\eta\}$}\label{tabsumupper}
\begin{center} 
\noindent \begin{tabular}{|r|r|r|}
\hline 
 $r$ &  $\frac r {\lc \sqrt{2  r+\frac{1}{4}}-\frac{1}{2}\rc}$ & $\sqrt r$  \\[3.3mm]
 \hline 
3 & 1.5 & 1.73205\\ \hline 
4 & 1.33333 & 2.\\ \hline
5 & 1.66667 & 2.23607\\ \hline
6 & 2. & 2.44949\\ \hline
7 & 1.75 & 2.64575\\ \hline
8 & 2. & 2.82843\\ \hline
9 & 2.25 & 3.\\ \hline
10 & 2.5 & 3.16228\\
\hline 
\end{tabular}
\end{center}
\end{table}

\section{Nordhaus-Gaddum Sum Lower Bounds}\label{sNGsumlower}

We use the ideas in Kostochka's proof that $\ngsl{\h}(2;n)=\Theta( \frac n{\sqrt {\log n}})$ \cite[Corollary 5]{Kost84} to establish an analogous result for $\ngsl{\h}(r;n)$ with $r\ge 3$.

\begin{thm}\label{etabarthm} For a fixed $r\ge 2$ and $n\to\infty$, $\ngsl{\h}(r;n)=\Theta( \frac n{\sqrt {\log n}})$.  Specifically, for $n$ large enough,
\[\frac 1 {570 r}\frac n{\sqrt {\log n}}\le \ngsl{\h}(r;n)\le  r \frac n{\sqrt {\log n}}.\]
  \end{thm}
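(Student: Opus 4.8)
The plan is to prove the two inequalities separately: the lower bound by a pigeonhole argument feeding into Kostochka's average-degree bound for the Hadwiger number, and the upper bound by analyzing a random $r$-decomposition.

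\emph{Lower bound.} Let $G_1,\dots,G_r$ be an arbitrary $r$-decomposition of $K_n$. Since the edge sets partition $E(K_n)$ we have $\sum_{i=1}^r|E(G_i)|=\binom n2$, so by averaging some index $i_0$ satisfies $|E(G_{i_0})|\ge\frac1r\binom n2$; hence $G_{i_0}$ has average degree at least $\frac{n-1}{r}$. Kostochka's theorem \cite{Kost84} provides an absolute constant $\kappa>0$ such that every graph with average degree $d$ has Hadwiger number at least $\kappa\, d/\sqrt{\log d}$ (equivalently, a graph with no $K_t$ minor has average degree $O(t\sqrt{\log t})$). Applying this to $G_{i_0}$ and using $\frac{n-1}{r}\ge\frac{n}{2r}$ and $\sqrt{\log\frac{n-1}{r}}\le\sqrt{\log n}$ for $n$ large, we get
\[
\sum_{i=1}^r\h(G_i)\ \ge\ \h(G_{i_0})\ \ge\ \frac{c}{r}\cdot\frac{n}{\sqrt{\log n}}
\]
for an absolute constant $c$ and all sufficiently large $n$; plugging in Kostochka's explicit constant comfortably yields $c\ge\frac1{570}$, so $\ngsl{\h}(r;n)\ge\frac1{570r}\cdot\frac{n}{\sqrt{\log n}}$.

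\emph{Upper bound.} By Lemma~\ref{randomrlem}, for $n$ large there is a random $r$-decomposition $G_1,\dots,G_r$ with each $G_i$ distributed as $G(n,\tfrac1r)$. As $\tfrac1r\le\tfrac12$, one may couple $G(n,\tfrac1r)$ to be a spanning subgraph of $G(n,\tfrac12)$, and since $\h$ is monotone under taking subgraphs it is enough to bound $\h(G(n,\tfrac12))$. The classical estimate for the order of the largest complete minor of a dense random graph (Bollob\'as--Catlin--Erd\H{o}s; see also the computations in \cite{Kost84}) shows that a.a.s.\ $\h(G(n,\tfrac12))\le(1+o(1))\,n/\sqrt{\log n}$, and in fact the leading constant is bounded away from $1$ (the base of the logarithm only enters the constant and leaves room against the target $r\,n/\sqrt{\log n}$). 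Thus for each fixed $i$, a.a.s.\ $\h(G_i)\le n/\sqrt{\log n}$ once $n$ is large, and a union bound over the $r$ (fixed) indices shows that for some $r$-decomposition all of these hold simultaneously. Therefore $\ngsl{\h}(r;n)\le\sum_{i=1}^r\h(G_i)\le r\,n/\sqrt{\log n}$.

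\emph{Main obstacle.} The only genuinely delicate point is pinning down the numerical constant $\frac1{570r}$ in the lower bound: this means inserting Kostochka's explicit form of ``no $K_t$ minor $\Rightarrow$ bounded average degree'' into the chain above, inverting the relation $d\asymp t\sqrt{\log t}$, and absorbing the replacement of $\log\frac{n-1}{r}$ by $\log n$ together with the integer rounding. I would be deliberately wasteful at each step rather than optimize, since only a $\Theta$-bound with this specific constant is claimed. As a consistency check, the case $r=2$ of both halves reproduces Kostochka's known result $\ngsl{\h}(2;n)=\Theta(n/\sqrt{\log n})$ \cite[Corollary 5]{Kost84}.
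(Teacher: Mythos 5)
Your proof is correct and follows essentially the same route as the paper: pigeonhole to find a part with at least $\frac{1}{r}\binom{n}{2}$ edges and then Kostochka's average-degree bound for the lower bound, and a random $r$-decomposition with the ``almost all graphs have $\h(G)\le n/\sqrt{\log n}$'' estimate for the upper bound. The only differences are cosmetic: you defer the arithmetic that turns Kostochka's constant $270$ into $\frac{1}{570r}$ (the paper does it by taking $k=\frac{n-1}{2r}$ and checking $n\ge 19$), and your explicit coupling of $G(n,\frac1r)$ inside $G(n,\frac12)$ is actually a bit more careful than the paper's terse appeal to ``almost all graphs.''
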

\bpf  Let  $G_i=(V_i,E_i)$ be any $r$-decomposition of $K_n$.  Then there must be some $G_\ell$ that has ${|E_\ell|}\ge \frac {n(n-1)}{2r}$.  So by \cite[Theorem 1]{Kost84} with $k=\frac {n-1}{2r}$, \vspace{-2mm}
\[\h(G_1)+\dots+\h(G_r)\ge \h(G_\ell)\ge\frac {k}{270\sqrt{\log k}}\ge \frac {n-1}{540r\sqrt{\log{(n-1)}}}.\vspace{-2mm}\]
Thus for $n\ge 19$,  $\ngsl{\h}(r;n)\ge  \frac 1{570r}\frac n{\sqrt {\log n}}$.

Almost all graphs $G$  of order $n$ 
have $\h(G)\le \frac n {\sqrt{\log n}}$ \cite[p. 308]{Kost84}.  Thus there exists an $r$-decomposition such that  \vspace{-2mm}
\[\h(G_1)+\dots+\h(G_r)\le  r\frac n {\sqrt{\log n}},\vspace{-2mm}\]
and $\ngsl{\h}(r;n)\le r \frac n{\sqrt {\log n}}$.
\epf

 The next two results establish upper bounds for the sum lower bound $\ngsl{\beta}(r;n)$ for $r\ge 3$ and  $\beta\in\{\mu, \nu, \xi, \tw, \la, \pw,\ppw\}$, beginning with path-width.  

\begin{thm}\label{thmpwsumlower}    For $r\ge 3$, 
$ \ngsl{\pw}(r;n)\le 3\lc\frac n 4 \rc$.
\end{thm}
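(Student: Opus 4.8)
The plan is to exhibit, for every $r \ge 3$, an $r$-decomposition of $K_n$ in which each piece has small path-width, so that the sum is at most $3\lceil n/4 \rceil$. Since $\pw(G) \le \ppw(G)$ and we want an \emph{upper} bound on the \emph{minimum} of $\pw(G_1)+\dots+\pw(G_r)$, we only need one good decomposition. The natural idea is to partition the vertex set into $4$ roughly equal blocks $V_1, V_2, V_3, V_4$ of size about $n/4$ and build three graphs $G_1, G_2, G_3$ (putting $G_4 = \dots = G_r = \overline{K_n}$, which contributes $0$ each since $\pw(\overline{K_n}) = 0$). Each $G_i$ should collect the edges inside the blocks and the edges between certain pairs of blocks in such a way that (a) together they cover all of $E(K_n)$, and (b) each $G_i$ is supported on a vertex set of size about $n/2$ (so its path-width is at most about $n/2 - 1$), or more precisely has path-width at most $\lceil n/4 \rceil$.

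Here is the allocation I would try. There are $\binom{4}{2}=6$ inter-block pairs; assign two of them to each of $G_1, G_2, G_3$ so that the two pairs assigned to $G_i$ are vertex-disjoint: $G_1 \leftarrow \{V_1V_2, V_3V_4\}$, $G_2 \leftarrow \{V_1V_3, V_2V_4\}$, $G_3 \leftarrow \{V_1V_4, V_2V_3\}$ (this is a proper $1$-factorization of $K_4$). Then distribute the four intra-block edge sets $E(K_{V_1}), \dots, E(K_{V_4})$ among $G_1, G_2, G_3$, say putting $K_{V_1}, K_{V_3}$ into $G_1$-type assignments appropriately — the point is that each $G_i$ is then a disjoint union of two "blown-up edge" pieces, each piece being $K_{s} \cup K_{s}$ plus a complete bipartite graph $K_{s,s}$ between them (for $s \approx n/4$), i.e.\ a clique-sum-like graph on $2s \approx n/2$ vertices. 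I would then bound the path-width of such a piece: a complete bipartite-type graph on $2s$ vertices (even with the two sides made into cliques) has path-width at most $s$ — indeed a graph on $2s$ vertices with a dominating clique of size $s$ has path-width at most $s$, and more carefully one checks $\pw \le \lceil n/4\rceil$ by exhibiting an explicit linear $k$-tree containment with $k = \lceil n/4 \rceil$. Summing over $i = 1,2,3$ gives $3\lceil n/4\rceil$.

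The main obstacle is getting the constant exactly right: $3\lceil n/4\rceil$ is tight, so the bookkeeping for how the blocks are sized when $4 \nmid n$ and which intra-block cliques go where must be done carefully — one block of size $\lceil n/4 \rceil$ forces the bound, and I must make sure no $G_i$ ends up with path-width exceeding $\lceil n/4\rceil$ (e.g.\ by accidentally containing two full blocks worth of mutually adjacent structure). The cleanest route is probably to use the path-decomposition / linear $k$-tree definition directly: order the vertices of a piece $V_i \cup V_j$ as $(\text{all of } V_i, \text{all of } V_j)$ and check that the corresponding interval structure gives bags of size $\le \lceil n/4\rceil + 1$, so $\pw \le \lceil n/4\rceil$. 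For $r = 3$ this uses exactly three nonempty pieces; for $r > 3$ the extra pieces are empty and Observation \ref{rem:degenerate} (with $\pw(\overline{K_n}) = 0$) shows the degenerate bound is no larger. I would also double-check the edge case of small $n$ so the blocks are nonempty, or simply state the bound for $n$ large enough, consistent with the paper's conventions.
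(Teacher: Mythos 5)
Your decomposition skeleton is the same as the paper's: partition $[n]$ into four blocks of size about $n/4$, use the $1$-factorization of $K_4$ to hand the six cross pairs two apiece to $G_1,G_2,G_3$, distribute the four intra-block cliques among them, and pad with empty graphs for $r>3$ (that last reduction, and the $4\nmid n$ bookkeeping with blocks of size $\lc \frac n4\rc$, match the paper and are fine). The genuine gap is in the key quantitative step. You assert that a piece consisting of ``$K_s\cup K_s$ plus a complete bipartite graph $K_{s,s}$ between them'' --- i.e.\ with \emph{both} sides made into cliques --- has path-width at most $s$. That graph is $K_{2s}$, whose path-width is $2s-1\approx n/2$; similarly, having a dominating clique of size $s$ does not bound the path-width by $s$ (again $K_{2s}$ is a counterexample). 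The bound $\pw\le s$ holds only when at most one side of a piece carries its internal clique, so that the piece is either $K_{s,s}$ or the join $K_s\vee\overline{K_s}$ (path decomposition with bags $V_a\cup\{u\}$ for $u$ ranging over the independent side, plus the containment of $K_{s+1}$ for the matching lower bound).

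This makes the allocation of the intra-block cliques, which you leave to ``appropriately,'' the actual crux: each $K_{V_a}$ must be assigned to a graph $G_i$ in whose corresponding piece the \emph{other} block stays independent. Your sample choice ($K_{V_1},K_{V_3}$ to $G_1$) can be completed consistently, but sending both $K_{V_2}$ and $K_{V_4}$ to $G_2$ would create the piece $K_{2p}$ inside $G_2$ and destroy the bound, and your stated path-width claim would not detect this. The paper pins this down explicitly: $G_1$ gets the cliques on $V_1$ and $V_4$ together with the cross pairs $V_1V_2$ and $V_3V_4$, $G_2$ gets the cliques on $V_2$ and $V_3$ together with $V_1V_3$ and $V_2V_4$, and $G_3$ gets only $V_1V_4$ and $V_2V_3$; then $G_1$ and $G_2$ are each two copies of $K_p\vee\overline{K_p}$ and $G_3$ is two copies of $K_{p,p}$, all of path-width $p$, giving the sum $3p$. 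With the allocation fixed in this way and the false path-width claim replaced by the correct one for split and complete bipartite pieces, your argument goes through.
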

\bpf  Consider  the case $n=4p$ and 
$r=3$ first, and define the following 3-decomposition (illustrated in Figure \ref{twcounterex}):  Partition the vertices of $K_n$ into four sets $V_i,i=1,\dots,4$ of $p$ vertices each.   The edges of $G_1$ are the edges within $V_1$, the edges within $V_4$, the edges between $V_1$ and $V_2$, and the edges between $V_3$ and $V_4$.  The edges of $G_2$ are the edges within $V_2$, the edges within $V_3$, the edges between $V_1$ and $V_3$, and the edges between $V_2$ and $V_4$.  The edges of $G_3$ are the edges  between $V_1$ and $V_4$, and the edges between $V_2$ and $V_3$.
\begin{figure}[!h]
\begin{center}
$G_1$\qquad\qquad\qquad$G_2$\qquad\qquad\qquad$G_3$\vspace{-1mm}
\caption{Schematic diagram of a decomposition of $K_{4p}$ with $\pw(G_1)+\pw(G_2)+\pw(G_3)=3p$, where the shaded regions have all edges and the unshaded regions have no edges
.}\label{twcounterex}
\end{center}\vspace{-20pt}
\end{figure}

Observe that  $G_1$ and $G_2$ each consist of two copies of the same graph $H$, so $\pw(G_1)=\pw(G_2)=\pw(H)$.  Since $H$ can be constructed by starting with a $K_p$ and adding $p$ additional vertices each adjacent to all the vertices of the original $K_p$, $\pw(H)=p$.  Note that $G_3$ consists of two copies of $K_{p,p}$ and $\pw(K_{p,p})=p$.  Therefore, \vspace{-2mm}
\[\frac 3 4 n = 3p=\pw(G_1)+\pw(G_1)+\pw(G_1)\ge \ngsl{\pw}(3;n).\vspace{-2mm}\]

For the case in which $n=4p+\ell$ with $1\le\ell\le 3$, assign $p+1$ vertices to $V_i, i=1,\dots,\ell$ and $p$ to the other sets.  Then $\pw(G_i)\le p+1=\lc\frac n 4 \rc$, and $3\lc\frac n 4\rc\ge \pw(G_1)+\pw(G_2)+\pw(G_3)\ge \ngsl{\pw}(3;n)$.
For the case of $r>3$, a degenerate decomposition can be used.
  \epf\vspace{-1mm}

\begin{cor}\label{thmallsumlower}  $\null$\vspace{-10pt}
\ben[(i)]
\item\label{thmallsumlower1} Let $\beta\in\{\tw, \la, \pw\}$.  For $r\ge 3$, \vspace{-5pt}
\[ \ngsl{\beta}(r;n)\le 3\lc\frac n 4\rc \qquad\mbox{ and }\qquad\ngsln{\beta}(r;n)\le 3\lc\frac n 4\rc+r-3.\vspace{-10pt}\]
\item\label{thmallsumlower3}   For $r\ge 3$, \vspace{-5pt} \[ \ngsl{\ppw}(r;n)\le 3\lc\frac n 4\rc +r \qquad\mbox{ and }\qquad\ngsln{\ppw}(r;n)\le 3\lc\frac n 4\rc+2r-3.\vspace{-10pt}\]
\een\end{cor}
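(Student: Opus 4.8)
The plan is to derive every inequality from Theorem~\ref{thmpwsumlower} by combining it with the pointwise comparisons $\tw(G)\le\la(G)\le\pw(G)\le\ppw(G)$ and $\ppw(G)\le\pw(G)+1$ recorded in Section~\ref{ssparamdefs}. The basic observation is that $\ngsl{\beta}(r;n)$ and $\ngsln{\beta}(r;n)$ are monotone in $\beta$ under pointwise domination: if $\alpha(G)\le\beta(G)$ for all $G$, then $\sum_i\alpha(G_i)\le\sum_i\beta(G_i)$ for every $r$-decomposition (or non-degenerate $r$-decomposition) $G_1,\dots,G_r$ of $K_n$, and taking minima gives $\ngsl{\alpha}(r;n)\le\ngsl{\beta}(r;n)$ and $\ngsln{\alpha}(r;n)\le\ngsln{\beta}(r;n)$. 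With $\alpha\in\{\tw,\la\}$ and $\beta=\pw$ this reduces the degenerate bounds for $\beta\in\{\tw,\la,\pw\}$ to $\ngsl{\pw}(r;n)\le 3\lc\frac n4\rc$, which is precisely Theorem~\ref{thmpwsumlower}.

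For the non-degenerate bounds, I would first show $\ngsln{\pw}(r;n)\le 3\lc\frac n4\rc+(r-3)$ by lightly modifying the explicit decomposition built in the proof of Theorem~\ref{thmpwsumlower}. When $r=3$ the parts $G_1,G_2,G_3$ each already contain an edge (for $n$ not too small), so that decomposition is non-degenerate and nothing is needed. When $r\ge4$, delete $r-3$ distinct edges of $G_1$ and let each of them be the single edge of a new spanning subgraph $G_4,\dots,G_r$; since path-width is monotone under taking subgraphs, the thinned $G_1$ still has path-width at most $\lc\frac n4\rc$, each $G_j$ with $j\ge4$ has path-width $1$, and the total is at most $3\lc\frac n4\rc+(r-3)$. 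Monotonicity then yields $\ngsln{\tw}(r;n)\le\ngsln{\la}(r;n)\le\ngsln{\pw}(r;n)\le 3\lc\frac n4\rc+r-3$.

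For proper path-width I would apply $\ppw(G)\le\pw(G)+1$ term by term. Summing over a decomposition that realizes $\ngsl{\pw}(r;n)$ gives $\ngsl{\ppw}(r;n)\le\ngsl{\pw}(r;n)+r\le 3\lc\frac n4\rc+r$, and summing over the non-degenerate decomposition from the second paragraph gives $\ngsln{\ppw}(r;n)\le\ngsln{\pw}(r;n)+r\le 3\lc\frac n4\rc+2r-3$. There is no genuine obstacle here, as this is a corollary; the only points that need a word are that $\ngsl{\beta}(r;n)$ and $\ngsln{\beta}(r;n)$ are monotone in $\beta$ (immediate from the definitions) and that the edge-stealing step is legitimate — it is, because the part $G_1$ in the construction has $\Theta(n^2)$ edges (hence at least $r-3$ to spare once $n$ is large relative to the fixed $r$) and deleting edges cannot raise $\pw(G_1)$. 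One could even sharpen the degenerate $\ppw$ bound to $3\lc\frac n4\rc+3$ by noting that the empty parts of the Theorem~\ref{thmpwsumlower} decomposition contribute $0$ to $\ppw$, but the weaker $3\lc\frac n4\rc+r$ is all that is asserted.
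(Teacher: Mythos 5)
Your proposal is correct and follows essentially the same route as the paper: reduce the degenerate bounds to Theorem~\ref{thmpwsumlower} via the monotonicity $\tw\le\la\le\pw$, obtain the non-degenerate bound by moving $r-3$ edges out of one large part of that decomposition into new single-edge parts (the paper takes them from $G_3$ rather than $G_1$, which is immaterial), and deduce the $\ppw$ bounds termwise from $\ppw(G)\le\pw(G)+1$. Your remark that the degenerate $\ppw$ bound could be sharpened to $3\lc\frac n4\rc+3$ is a valid minor improvement, but the stated weaker bound is all that is claimed.
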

\bpf  The first statement in \eqref{thmallsumlower1} follows from Theorem \ref{thmpwsumlower} together with the inequality $ \tw(G)\le \la(G)\le \pw(G)$ for all graphs $G$. The second statement in \eqref{thmallsumlower1} follows from the first statement by first constructing a degenerate $r$-decomposition based on the non-degenerate 3-decomposition in Theorem \ref{thmpwsumlower}, and then removing $r-3$ edges from $G_3$ and placing one of these edges in each $G_i, i=4,\dots,n$.
  Statement \eqref{thmallsumlower3} follows from  statement  \eqref{thmallsumlower1} and the fact that  $\ppw(G)\le\pw(G)+1$  for any graph $G$.
    \epf
 \vspace{-2mm}
 
 \begin{thm}
\label{thm:twblbd}
For $r\geq 2$ and $n\ge 1$, \vspace{-2mm}
\[\ngsl{\tw}(r;n)\geq rn-\frac{1}{2}r-\sqrt{(r^2-r)n^2-(r^2-r)n+\frac{1}{4}r^2}.\vspace{-2mm}\]
\end{thm}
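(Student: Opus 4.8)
The plan is to translate the bound into an edge-counting statement: a graph of small tree-width has few edges, while the $r$ parts of a decomposition must share all $\binom n2$ edges of $K_n$. Fix an arbitrary $r$-decomposition $G_1,\dots,G_r$ of $K_n$, put $k_i:=\tw(G_i)$ and $S:=k_1+\cdots+k_r$; the goal is to prove $S\ge rn-\tfrac r2-\sqrt{(r^2-r)n^2-(r^2-r)n+\tfrac14 r^2}$, since $\ngsl{\tw}(r;n)$ is the minimum of $S$ over all such decompositions.

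First I would recall the edge count for bounded tree-width: a graph $H$ on $n$ vertices with $\tw(H)=k$ satisfies $k\le n-1$ and is a spanning subgraph of a $k$-tree on $n$ vertices, and a $k$-tree on $n$ vertices has exactly $kn-\binom{k+1}2$ edges, so $|E(H)|\le kn-\binom{k+1}2$; when $k=0$ this reads $|E(H)|\le 0$, which is still correct. Summing over $i=1,\dots,r$ and using $\sum_i|E(G_i)|=\binom n2$ yields
\[ \frac{n(n-1)}2\ \le\ \sum_{i=1}^r\Bigl(k_i n-\frac{k_i(k_i+1)}2\Bigr)\ =\ nS-\frac12\sum_{i=1}^r k_i^2-\frac S2 . \]
Next I would apply $\sum_i k_i^2\ge S^2/r$ (Cauchy--Schwarz, equivalently convexity of $x\mapsto x^2$); note this must be used in exactly this direction, since $\sum k_i^2$ occurs with a negative coefficient, so a \emph{lower} bound on $\sum k_i^2$ is what sharpens the estimate. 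This reduces the inequality to the scalar quadratic
\[ S^2-(2rn-r)\,S+rn(n-1)\ \le\ 0 . \]

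Finally, because the left-hand side is an upward-opening quadratic in $S$, the inequality forces $S$ to lie between its two roots; in particular $S\ge\tfrac12\bigl((2rn-r)-\sqrt D\bigr)$, where $D=(2rn-r)^2-4rn(n-1)$. A short expansion gives $D=4\bigl((r^2-r)n^2-(r^2-r)n+\tfrac14 r^2\bigr)=4\bigl((r^2-r)n(n-1)+\tfrac14 r^2\bigr)$, which is $\ge0$ for all $r\ge2$ and $n\ge1$, so the square root is real and the smaller root equals precisely $rn-\tfrac r2-\sqrt{(r^2-r)n^2-(r^2-r)n+\tfrac14 r^2}$. Taking the minimum over all $r$-decompositions gives the claimed lower bound on $\ngsl{\tw}(r;n)$. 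There is no real obstacle here; the only points needing care are choosing the correct direction of the Cauchy--Schwarz step and the routine algebra in simplifying $D$, and the sole external ingredient — the edge count of a $k$-tree — is entirely standard.
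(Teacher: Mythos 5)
Your proposal is correct and follows essentially the same route as the paper: bound $|E(G_i)|$ by the edge count $k_i n-\binom{k_i+1}{2}$ of a $k_i$-tree, sum over the decomposition against $\binom n2$, apply $\sum_i k_i^2\ge S^2/r$, and solve the resulting quadratic in $S$. The only difference is notational (the paper groups the parts by tree-width value via counts $a_k$ before applying Cauchy--Schwarz), so nothing further is needed.
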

\begin{proof}  
First note that every $k$-tree on $n$ vertices (necessarily $n\ge k+1$) has 
\[E_k:=\frac{k(k-1)}{2}+(n-k)k=-\frac{1}{2}k^2+(n-\frac{1}{2})k\]
edges. So if $|V(G)|=n$ and $\tw(G)=k$, then $|E(G)|\le E_k$. Suppose $K_n$ is decomposed as $\{G_i\}_{i=1}^r$, with $a_k$ graphs having $\tw(G_i)=k$ for $k=0,\dots,n-1$.  Since every edge of $K_n$ must be covered,   
\begin{equation}
\label{treecover}
\sum_{k=0}^{n-1}a_kE_k\geq \frac{n(n-1)}{2}. 
\end{equation}
So we are trying to minimize $\sum_{i=1}^r{\tw(G_i)}$ subject to inequality (\ref{treecover}).

We begin by defining $S_1$ and $S_2$ by\vspace{-1mm}
\bea
S_1&\!\!\!\!:=&\sum_{k=0}^{n-1}a_kk =\sum_{i=1}^r{\tw(G_i)};\\
S_2&\!\!\!\!:=&\sum_{k=0}^{n-1}a_kk^2.
\eea
Since $S_2$ can be viewed as the sum of $r=\sum_{k=0}^{n-1}a_k$ squares, by Cauchy-Schwarz inequality,
\[\left(\sum_{k=0}^{n-1}a_kk^2\right)\left(r\cdot 1^2\right)\geq \left(\sum_{k=0}^{n-1}a_kk\right)^2.\]
This means $S_2\geq \frac{S_1^2}{r}$. \vspace{-3mm}

\bea\sum_{k=0}^{n-1}a_kE_k&=&-\frac{1}{2}S_2+(n-\frac{1}{2})S_1\\[-1mm]
&\leq& -\frac{1}{2r}S_1^2+(n-\frac{1}{2})S_1.\vspace{-2mm}\eea
So inequality \eqref{treecover} implies 
\[-\frac{1}{2r}S_1^2+(n-\frac{1}{2})S_1-\frac{n(n-1)}{2}\geq 0.\]
By solving the inequality,
\[\sum_{i=1}^r{\tw(G_i)}=S_1\geq rn-\frac{1}{2}r-\sqrt{(r^2-r)n^2-(r^2-r)n+\frac{1}{4}r^2}.\]
So $\ngsl{\tw}(r;n)$ is also greater than or equal to this value.
\end{proof}

\begin{cor}
\label{cor:twblower}  Let $\beta\in\{\tw,\la,\pw,\ppw\}$.  
For $r\geq 2$, 
\[\liminf_{n\to\infty}\frac{\ngsl{\beta}(r;n)}n\geq r-\sqrt{r^2-r} > \frac{1}{2}.\]
\end{cor}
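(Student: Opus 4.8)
The plan is to reduce the statement for $\la$, $\pw$, $\ppw$ to the one for $\tw$, and then read off the asymptotics directly from Theorem \ref{thm:twblbd}. For the reduction, recall that $\tw(G)\le\la(G)\le\pw(G)\le\ppw(G)$ for every graph $G$. Hence, given any $r$-decomposition $G_1,\dots,G_r$ of $K_n$ and any $\beta\in\{\la,\pw,\ppw\}$, we have $\sum_{i=1}^r\beta(G_i)\ge\sum_{i=1}^r\tw(G_i)\ge\ngsl{\tw}(r;n)$; taking the minimum over all $r$-decompositions yields $\ngsl{\beta}(r;n)\ge\ngsl{\tw}(r;n)$. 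So it suffices to prove $\liminf_{n\to\infty}\ngsl{\tw}(r;n)/n\ge r-\sqrt{r^2-r}$.

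For that, I would divide the inequality of Theorem \ref{thm:twblbd} by $n$:
\[\frac{\ngsl{\tw}(r;n)}{n}\ge r-\frac{r}{2n}-\sqrt{(r^2-r)-\frac{r^2-r}{n}+\frac{r^2}{4n^2}}.\]
As $n\to\infty$, the right-hand side tends to $r-\sqrt{r^2-r}$: the term $\frac{r}{2n}$ vanishes and the radicand converges to $r^2-r$. Since $\ngsl{\tw}(r;n)/n$ is bounded below by a sequence converging to $r-\sqrt{r^2-r}$, its $\liminf$ is at least $r-\sqrt{r^2-r}$.

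It then remains to check $r-\sqrt{r^2-r}>\frac12$ for all $r\ge2$ (indeed for all $r\ge1$). Rationalizing,
\[r-\sqrt{r^2-r}=\frac{r^2-(r^2-r)}{r+\sqrt{r^2-r}}=\frac{r}{r+\sqrt{r^2-r}},\]
and since $r^2-r<r^2$ gives $\sqrt{r^2-r}<r$, the denominator is strictly smaller than $2r$, so this quantity exceeds $\frac12$. There is no genuine obstacle in this argument; the only point worth noting is that Theorem \ref{thm:twblbd} supplies only an asymptotic lower bound on the per-vertex quantity $\ngsl{\tw}(r;n)/n$, so the conclusion is stated — and can only be stated this way from this bound alone — as a $\liminf$ rather than a limit.
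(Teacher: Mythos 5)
Your proof is correct and follows essentially the same route as the paper: divide the bound of Theorem \ref{thm:twblbd} by $n$, let $n\to\infty$, and use $\tw(G)\le\la(G)\le\pw(G)\le\ppw(G)$ to transfer the bound to the other parameters. Your explicit rationalization $r-\sqrt{r^2-r}=r/(r+\sqrt{r^2-r})>\tfrac12$ is a nice touch that the paper leaves as ``simple algebra.''
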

\bpf  For a fixed $r$, \[\lim_{n\to\infty}\frac{rn-\frac{1}{2}r-\sqrt{(r^2-r)n^2-(r^2-r)n+\frac{1}{4}r^2}}n=r-\sqrt{r^2-r}\ge \frac 1 2,\]
with the last inequality verified by simple algebra.
\epf\vspace{-2mm}

  \begin{cor}\label{thmCdVsumlower}  $\null$\vspace{-10pt}
\ben[(i)]
\item\label{thmallsumlower2}   For $r\ge 3$ and $n\ge 19$,  \vspace{-5pt}
\[\frac 1 {570 r}\frac n{\sqrt {\log n}}-r\le \ngsl{\nu}(r;n)\le \ngsln{\nu}(r;n)\le 3\lc\frac n 4\rc+r-3.\]
\item\label{thmallsumlower4} Let $\beta\in\{\mu,\xi\}$.  For $r\ge 3$ and $n\ge 19$,  \vspace{-5pt}
\[ \frac 1 {570 r}\frac n{\sqrt {\log n}}-r\le\ngsl{\beta}(r;n)\le \ngsln{\beta}(r;n)\le 3\lc\frac n 4\rc+2r-3.\vspace{-10pt}\]
\een\end{cor}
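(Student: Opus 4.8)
The plan is to sandwich the two quantities $\ngsl{\nu}(r;n)$ and $\ngsln{\nu}(r;n)$ (and likewise for $\mu$ and $\xi$) between the Hadwiger-number sum lower bound of Theorem~\ref{etabarthm} and the tree-width-variant upper bounds of Corollary~\ref{thmallsumlower}, using only the minor-monotone comparison inequalities among these parameters. The middle inequality $\ngsl{\beta}(r;n)\le\ngsln{\beta}(r;n)$ needs no work: a non-degenerate $r$-decomposition is in particular an $r$-decomposition, so the minimum defining $\ngsl{\beta}$ is taken over a superset of the decompositions defining $\ngsln{\beta}$.

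\textbf{Lower bound.} I would use $\h(G)-1\le\nu(G)$, $\h(G)-1\le\mu(G)$, and $\h(G)-1\le\xi(G)$, valid for every graph $G$ (for a graph with an edge this is the stated comparison; for $\overline{K_n}$ both sides equal $0$). Given any $r$-decomposition $G_1,\dots,G_r$ of $K_n$, summing over $i$ gives $\sum_i\nu(G_i)\ge\sum_i\h(G_i)-r\ge\ngsl{\h}(r;n)-r$, and taking the minimum over all decompositions yields $\ngsl{\nu}(r;n)\ge\ngsl{\h}(r;n)-r$; Theorem~\ref{etabarthm} then supplies $\ngsl{\h}(r;n)\ge\frac{1}{570r}\frac{n}{\sqrt{\log n}}$ for $n\ge 19$, which is exactly the claimed estimate. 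The identical computation handles $\mu$ and $\xi$.

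\textbf{Upper bound.} Here I would pass to a parameter in $\{\tw,\la,\pw,\ppw\}$ already controlled by Corollary~\ref{thmallsumlower}: if $\beta(G)\le\alpha(G)$ for every graph $G$, then evaluating $\sum_i\beta(G_i)\le\sum_i\alpha(G_i)$ on a non-degenerate decomposition realizing $\ngsln{\alpha}(r;n)$ shows $\ngsln{\beta}(r;n)\le\ngsln{\alpha}(r;n)$. For part~(i) I would invoke $\nu(G)\le\la(G)$ (\cite{CdVnu}) together with $\ngsln{\la}(r;n)\le 3\lc\frac n4\rc+r-3$ from Corollary~\ref{thmallsumlower}\eqref{thmallsumlower1}; for part~(ii) I would use $\mu(G)\le\xi(G)\le\ppw(G)$ together with $\ngsln{\ppw}(r;n)\le 3\lc\frac n4\rc+2r-3$ from Corollary~\ref{thmallsumlower}\eqref{thmallsumlower3}.

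\textbf{Main point of care.} There is no genuine obstacle here — the argument is a short composition of known comparisons — but two points warrant attention. First, for part~(i) one must use the sharp inequality $\nu(G)\le\la(G)$ rather than merely $\nu(G)\le\xi(G)\le\ppw(G)$, since the latter would only deliver the weaker additive constant $+2r-3$ of part~(ii). Second, one should confirm that every comparison inequality used remains valid for edgeless graphs (it does, trivially, as all the parameters then vanish), so that empty graphs appearing in a decomposition contribute nothing problematic to either side of the estimates.
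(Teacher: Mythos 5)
Your argument is correct and is essentially the paper's own proof: the lower bound comes from Theorem~\ref{etabarthm} via $\h(G)-1\le\nu(G)$ and $\h(G)-1\le\mu(G)\le\xi(G)$, and the upper bound comes from the non-degenerate case of Corollary~\ref{thmallsumlower} via $\nu(G)\le\la(G)$ for part~(i) and $\mu(G)\le\xi(G)\le\ppw(G)$ for part~(ii), exactly as in the paper. One aside in your ``point of care'' is factually off --- $\nu$, $\xi$, and $\mu$ equal $1$ (not $0$) on $\overline{K_n}$ for $n\ge 2$, so for instance $\nu(\overline{K_n})\le\la(\overline{K_n})$ actually fails --- but this does not damage your proof, since you apply those upper-bound comparisons only inside non-degenerate decompositions, where every part has an edge, and the lower-bound comparison $\h(G)-1\le\nu(G)$ still holds on edgeless graphs because $\h(\overline{K_n})-1=0$.
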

\bpf   The first inequality in each statement follows from Theorem \ref{etabarthm} and the inequalities $\eta(G)-1\le\nu(G)$ and  $\eta(G)-1\le\mu(G)\le\xi(G)$ for all graphs $G$.  The third inequality in  statement \eqref{thmallsumlower2} (respectively, \eqref{thmallsumlower4}) follows from the non-degenerate case in Corollary \ref{thmallsumlower}\eqref{thmallsumlower1} (respectively,  \ref{thmallsumlower}\eqref{thmallsumlower3}) and the fact that for a graph $G$ that has an edge, $\nu(G)\le  \la(G)$     \cite{CdVnu} (respectively, $\mu(G)\le\xi(G)\le\ppw(G)$   \cite{param}). 
    \epf\vspace{-2mm}

  Corollaries \ref{thmallsumlower} and \ref{thmCdVsumlower} provide our best upper bounds on $\ngsl{\beta}(r;n)$ as $n\to \infty$ for $r\ge 3$ and $\beta\in\{\mu,\nu, \xi, \tw, \la, \pw, \ppw\}$.  However, for $n$ in the range $2r\le n < 4r$ (and in some cases somewhat larger), the bound in the next theorem is better, and it is also used in Section \ref{sNGprodl}.
  
  \begin{thm}\label{oldthmallsumlower}  Let $\beta\in\{\mu,\nu, \xi, \tw, \la, \pw, \ppw\}$.  For $r\ge 2$ and $n\ge 2r$, \[ \ngsl{\beta}(r;n)\le \ngsln{\beta}(r;n)\le n-r,\] and $n-r$ is realized by a non-degenerate decomposition in which all but at most one of the graphs  are paths.
\end{thm}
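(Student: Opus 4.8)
Let $\beta\in\{\mu,\nu,\xi,\tw,\la,\pw,\ppw\}$. For $r\ge 2$ and $n\ge 2r$, $\ngsl{\beta}(r;n)\le\ngsln{\beta}(r;n)\le n-r$, and $n-r$ is realized by a non-degenerate decomposition in which all but at most one of the graphs are paths.

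\noindent\textbf{Proof plan.} The plan is to prove $\ngsln{\beta}(r;n)\le n-r$; since every non-degenerate decomposition is in particular a decomposition, this automatically yields $\ngsl{\beta}(r;n)\le\ngsln{\beta}(r;n)$. Because $\tw,\la,\pw,\mu,\nu,\xi$ are all at most $\ppw$, it suffices to exhibit, for each fixed $r\ge2$ and each $n\ge2r$, one non-degenerate $r$-decomposition $G_1,\dots,G_r$ of $K_n$ in which $G_1,\dots,G_{r-1}$ are paths and $G_r$ is a linear $(n-2r+1)$-tree. On such a decomposition $\sum_{i=1}^r\beta(G_i)=(r-1)\cdot1+(n-2r+1)=n-r$ for every $\beta$ in the list: indeed each of these parameters equals $1$ on a path, is minor-monotone with $\beta(K_{k+1})=k$, and is at most $\ppw$, so on a linear $(n-2r+1)$-tree (which contains $K_{n-2r+2}$ and has proper path-width $n-2r+1$) it takes the value $n-2r+1$. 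This simultaneously gives $\ngsln{\beta}(r;n)\le n-r$ for all seven parameters and exhibits the stated realizing decomposition, with all graphs but at most one being paths.

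For the construction I would take $G_r:=P_n^{n-2r+1}$, the $(n-2r+1)$-st power of the path $1$-$2$-$\cdots$-$n$, which is the canonical linear $(n-2r+1)$-tree on $[n]$. The edges of $K_n$ not used by $G_r$ are exactly the pairs $\{i,j\}$ with $|i-j|\ge n-2r+2$; call this graph $F_n$. A short count gives $|E(F_n)|=\binom n2-|E(P_n^{n-2r+1})|=\binom{2r-1}{2}$, all of these edges are incident to $\{1,\dots,2r-2\}\cup\{n-2r+3,\dots,n\}$, and $F_n$ is connected (every non-isolated vertex lies within distance $2$ of vertex $n$, which is joined to $1,\dots,2r-2$). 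Thus the whole theorem reduces to a single combinatorial statement: \emph{$F_n$ decomposes into $r-1$ nonempty paths.}

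To prove that, write $F_n=D_{n-2r+2}\cup\cdots\cup D_{n-1}$ as an edge-disjoint union of its $2r-2$ distance classes, where $D_d$ consists of the distance-$d$ edges of the path. When $n\ge 4r-4$ the classes are matchings whose "left-end" and "right-end" vertex sets are disjoint; I would pair the $2r-2$ classes into $r-1$ consecutive pairs and check directly that each union $D_{d+1}\cup D_d$ is a single path, namely the explicit zigzag $n-d,\ n,\ n-d-1,\ n-1,\dots$ on at most $4(r-1)$ vertices and with at least $3$ edges, so the decomposition is non-degenerate. For the remaining range $2r\le n<4r-4$ the two end-sets overlap, a paired union $D_{d+1}\cup D_d$ can acquire a vertex of degree $3$, and this zigzag argument breaks; here I would instead invoke the classical decomposition of $K_m$ into $\lceil m/2\rceil$ Hamiltonian paths (for $m$ even) or into $(m-1)/2$ Hamiltonian cycles (for $m$ odd), relabelled so that $P_n^{n-2r+1}$ appears as one of the pieces — at the extreme $n=2r$ this is simply the statement that $K_{2r}$ is the edge-disjoint union of the path $1$-$2$-$\cdots$-$2r$ and $r-1$ further Hamiltonian paths — and then handle the remaining "folded" shapes of $F_n$ by direct inspection of the bounded-size graph on the $\le 4r-4$ extreme-index vertices.

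The main obstacle is exactly this low range $2r\le n<4r$. There the edge budget of $F_n$ is tight (at $n=2r$ the $r-1$ paths are forced to be Hamiltonian), so there is essentially no slack, the clean zigzag construction degenerates, and one is pushed to lean on Hamiltonian-decomposition results for complete graphs together with careful bookkeeping on the overlap structure of $F_n$. This is also precisely the range in which $n-r$ improves on the asymptotic upper bounds of Corollaries \ref{thmallsumlower} and \ref{thmCdVsumlower}, so it is the substantive part of the theorem rather than a boundary technicality.
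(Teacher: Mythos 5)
Your overall frame is sound: exhibiting one non-degenerate $r$-decomposition with $r-1$ paths and one subgraph of a linear $(n-2r+1)$-tree does give $\ngsln{\beta}(r;n)\le n-r$ for all seven parameters at once, and the zigzag pairing of consecutive distance classes $D_{d+1}\cup D_d$ of $F_n$ does produce $r-1$ nonempty paths when $n\ge 4r-4$ (there each class is a matching between disjoint intervals, the union of a consecutive pair has $2(n-d)$ vertices, $2(n-d)-1$ edges, and maximum degree $2$, hence is a path). The genuine gap is the range $2r<n<4r-4$, which you correctly identify as the substantive part and then do not prove. The fallback you sketch would fail: for $2r<n$ the graph $P_n^{\,n-2r+1}$ is a proper power of a path, not a Hamiltonian path, so it cannot occur as a piece of a Hamiltonian path or Hamiltonian cycle decomposition of $K_n$; and the ``bounded-size graph on the $\le 4r-4$ extreme-index vertices'' is not bounded --- its order grows linearly with $r$ --- so ``direct inspection'' is not an argument. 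As written, your proof covers $n=2r$ and $n\ge 4r-4$ only.

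The gap is an artifact of insisting that the large part be exactly $P_n^{\,n-2r+1}$. The paper's construction avoids the case split entirely: decompose the clique on the vertex set $[2r]$ into $r$ Hamiltonian paths of $2r-1$ edges each \cite{T}, take $r-1$ of them (padded with $n-2r$ isolated vertices) as $P_1,\dots,P_{r-1}$, and let $P_r$ consist of all remaining edges of $K_n$, i.e.\ the last Hamiltonian path $(1,2,\dots,2r)$ together with every edge meeting $\{2r+1,\dots,n\}$. One then checks that $P_r$ is contained in the linear $(n-2r+1)$-tree built by starting from the clique on $\{2r-1,\dots,n\}$ and attaching the vertices $2r-2,2r-3,\dots,1$ in order, each adjacent to its successor and to all of $\{2r+1,\dots,n\}$; hence $\ppw(P_r)\le n-2r+1$ and the sum is at most $(r-1)+(n-2r+1)=n-r$ uniformly for all $n\ge 2r$. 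To repair your version you would similarly need to replace $P_n^{\,n-2r+1}$ in the low range by a more flexible linear $(n-2r+1)$-tree whose complement in $K_n$ visibly splits into $r-1$ paths --- which is essentially what the paper does.
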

\bpf Consider proper path-width first.  Define $K_{2r}$ to be the subgraph of $K_n$ induced by the vertices $[2r]$. 
We can partition the $r(2r-1)$ edges of $K_{2r}$ into $r$ paths each having $2r-1$ edges \cite{T}. Without loss of generality, one of the paths, which we call the ``last path," is $(1,2,\dots,2r)$.  Define the following non-degenerate $r$-decomposition of $K_{n}=([n],E)$: The $r-1$ paths of $K_{2r}$ that are not the last path, each taken together with $n-2r$ isolated vertices, denoted by $P_i=([n], E_i), i=1,\dots,r-1$, and $P_r=\left([n],E\setminus\left(\dot{\cup}_{i=1}^{r-1} E_i\right)\right)$; observe that $P_r$ includes the last path.  For every  $i=1,\dots,r-1$, $\ppw(P_i)=1$.  We construct $P_r$ as a  linear $(n-2r+1)$-tree  as follows:  
Begin with the $(n-2r+2)$-clique induced by   $\{2r-1,\dots,n\}$.  For $i=1,\dots,2r-2$, add vertex $2r-1-i$ adjacent to $2r-i$ and to each vertex in $\{2r+1,\dots,n\}$.  Thus  $\ppw(P_r)= n-2r+1$.
Since each $P_i$ has an edge, $\ngsln{\ppw}(r,n)\le (r-1)\cdot 1+(n-2r+1)=n-r$. 

Finally, $\tw(G)\le\la(G)\le\pw(G)\le\ppw(G)$, $\nu(G)\le\la(G)$, and $\mu(G)\le\xi(G)\le \ppw(G)$ for every graph $G$ that has an edge,  and the decomposition just constructed is non-degenerate. Thus we  have $
\ngsln{\beta}(r;n)\le n-r$ for  $\beta\in\{\mu,\nu, \xi, \tw, \la, \pw\}$. \epf

\section{Nordhaus-Gaddum Product Upper Bounds}
\label{sNGprodu}

To see the relation between sums and products, we use 
the inequality of arithmetic and geometric means (AM-GM inequality)\vspace{-2mm}
\[\prod_{i=1}^r a_i\leq r^{-r} \left(\sum_{i=1}^ra_i\right)^r\vspace{-2mm}\]
for nonnegative real numbers $a_1,a_2,\ldots ,a_r$.

To establish a lower bound for a product, we use a technical lemma. 

\begin{lem}
\label{lem:mainieq}
For fixed  integers $r\ge 2$ and $n\ge 1$, let $a_1,a_2,\ldots,a_r$ be integers such that $1\leq a_i\leq n$ for all $i$.  Let $\sigma=\sum_{i=1}^r a_i$ and $\pi=\prod_{i=1}^r a_i$.  Then $n^q\rho \leq \pi$, where 
$q=\left\lfloor\frac{\sigma-r}{n-1}\right\rfloor$ and $\rho=\sigma-r-q(n-1)+1$.
\end{lem}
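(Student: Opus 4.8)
The plan is to fix the sum $\sigma$ and minimize the product $\pi$ over all integer tuples $(a_1,\dots,a_r)$ with $1\le a_i\le n$ and $\sum a_i=\sigma$, then check that the claimed quantity $n^q\rho$ is exactly that minimum (or a lower bound for it). First I would recall the standard ``smoothing'' fact: for a fixed sum, a product of bounded integers is minimized at an extreme configuration, i.e.\ when as many coordinates as possible are pushed to the boundary values $1$ or $n$. Concretely, if two coordinates $a_i,a_j$ both lie strictly between $1$ and $n$, replacing the pair $(a_i,a_j)$ by $(a_i-1,a_j+1)$ keeps the sum fixed and does not increase the product, since $(a_i-1)(a_j+1)=a_ia_j-(a_j-a_i+1)\le a_ia_j$ whenever $a_j\ge a_i$; iterating drives us to a tuple with at most one coordinate not in $\{1,n\}$. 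So the minimizing tuple has, say, $q$ coordinates equal to $n$, one coordinate equal to some $c$ with $1\le c\le n$, and the remaining $r-1-q$ coordinates equal to $1$ (with the middle coordinate absorbed into the count of $1$'s if $c=1$).

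Next I would solve for $q$ and $c$ from the sum constraint. If $q$ coordinates equal $n$, one equals $c$, and $r-1-q$ equal $1$, then $\sigma = qn + c + (r-1-q) = q(n-1) + c + (r-1)$, so $q(n-1) = \sigma - r + 1 - c$. Since $1\le c\le n$, this forces $q = \lfloor (\sigma-r)/(n-1)\rfloor$ (the value of $q$ is pinned down because $c$ ranges over an interval of length $n-1$, exactly the ``step size''), and then $c = \sigma - r + 1 - q(n-1) = \rho$, matching the definitions in the statement. The minimal product is therefore $n^q\cdot \rho \cdot 1^{\,r-1-q} = n^q\rho$, and since every admissible tuple has product at least this minimum, we get $n^q\rho\le\pi$ as claimed. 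I would also note the degenerate edge cases: if $n=1$ then all $a_i=1$, $\sigma=r$, $q$ is interpreted appropriately and the bound is trivial; and one should check $0\le q\le r$ and $1\le\rho\le n$ follow from $r\le\sigma\le rn$, so the formula is well-posed.

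The step I expect to require the most care is not the smoothing inequality itself (which is a one-line algebraic check) but making the bookkeeping airtight: verifying that the floor expression for $q$ is consistent with $\rho$ landing in the valid range $[1,n]$ rather than $[0,n-1]$ or $[1,n+1]$, and handling the boundary between ``$c=n$'' (which should really be counted as another coordinate equal to $n$, increasing $q$) and ``$c=1$''. The cleanest way to avoid an off-by-one is to write $\sigma - r = q(n-1) + (\rho-1)$ with $0\le \rho-1\le n-1$, which is precisely the Euclidean division of $\sigma-r$ by $n-1$, and then read off $q$ and $\rho$ from uniqueness of quotient and remainder; the product of the extremal tuple is then immediate. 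I would present the argument in exactly that order: state and prove the pair-swap inequality, deduce the form of a minimizer, apply Euclidean division to identify the minimizer's multiplicities, and conclude.
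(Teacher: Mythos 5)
Your proposal is correct and follows essentially the same route as the paper: a pair-swap smoothing argument showing the minimizer has at most one coordinate outside $\{1,n\}$, followed by Euclidean division of $\sigma-r$ by $n-1$ to identify $q$ and $\rho$. Your write-up is in fact slightly more careful than the paper's (you verify the sign of the product decrease and the well-posedness of the ranges of $q$ and $\rho$, and flag the degenerate case $n=1$), but the underlying argument is identical.
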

\begin{proof}
Suppose $1<a_1\leq a_2<n$.  Observe that \vspace{-2mm}
\[(a_1-1)+(a_2+1)+\sum_{i=3}^ra_i=\sigma\vspace{-2mm}\]
but\vspace{-2mm}
\[(a_1-1)(a_2+1)\prod_{i=1}^ra_i<\pi.\vspace{-2mm}\]
Therefore, subject to a fixed $\sigma$, the value of $\pi$ is minimized when all values of $a_i$'s but at most one are either $1$ or $n$.  This is equivalent to solving for nonnegative integers $q$ and $\rho$ in\vspace{-2mm} \[\sigma=(r-1-q)\cdot 1+qn+\rho,\vspace{-2mm}\]
where $1\leq \rho < n$.  This equation can be rewritten as $\sigma-r=q(n-1)+(\rho-1)$ with $0\leq \rho-1< n-1$.  Therefore, $q=\left\lfloor\frac{\sigma-r}{n-1}\right\rfloor$ is uniquely determined by the division algorithm, and so is $\rho=\sigma-r-q(n-1)+1$.
\end{proof}

For all parameters $\beta$ discussed in this paper, $\beta(G)\leq n$ (where $n$ is the order of $G$).  Therefore, $\ngpu{\beta}(r;n)\leq n^r$.  For tree-width and its variants largeur d'arborescence, path-width and proper path-width, this upper bound is essentially the best we can do. 

\begin{thm}
\label{thm:twpu}
Let $\beta\in \{\tw,\la,\pw,\ppw\}$.  For all $r\ge 2$, $\ngpu{\beta}(r;n)=\ngpun{\beta}(r;n)=n^r-o(n^r)$.
\end{thm}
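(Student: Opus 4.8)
The plan is to prove the two inequalities $\ngpun{\beta}(r;n)\le \ngpu{\beta}(r;n)\le n^r$ and $\ngpun{\beta}(r;n)\ge n^r-o(n^r)$; since the non-degenerate maximum is taken over a subfamily of all $r$-decompositions, these together pin down both quantities. The first chain is immediate: for $\beta\in\{\tw,\la,\pw,\ppw\}$ we have $\beta(G)\le n$ for every graph $G$ of order $n$, so every product of $r$ such values is at most $n^r$. Thus the real content is producing, for each large $n$, a non-degenerate $r$-decomposition $G_1,\dots,G_r$ of $K_n$ in which every factor $\beta(G_i)$ is $n-o(n)$; this is the same circle of ideas used in the proof of Theorem \ref{thmtwupper}, but now I need the estimate on each part individually rather than just on the sum.

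First I would invoke the random $r$-decomposition of Lemma \ref{randomrlem}: for $n$ large there is an $r$-decomposition $G_1,\dots,G_r$ of $K_n$ in which each $G_i$ is a $G(n,\tfrac1r)$ random graph. By the result of \cite{PS} that $\tw(G(n,p))=n-o(n)$ for fixed $p>0$, for each individual $i$ we have $\tw(G_i)\ge n-f(n)$ with probability tending to $1$, where $f(n)=o(n)$ is the same for every $i$ (the $G_i$ share the marginal distribution $G(n,\tfrac1r)$). Since $r$ is fixed, a union bound over $i=1,\dots,r$ — which does not require the $G_i$ to be independent — shows that with probability tending to $1$ every $G_i$ simultaneously satisfies $\tw(G_i)\ge n-f(n)$; fix such a decomposition for each large $n$. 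Because $\tw(G)\le\la(G)\le\pw(G)\le\ppw(G)\le n$ for every graph $G$ of order $n$, this gives $n-f(n)\le\beta(G_i)\le n$ for all $i$ and all $\beta\in\{\tw,\la,\pw,\ppw\}$.

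It then follows that
\[
\prod_{i=1}^r\beta(G_i)\ \ge\ (n-f(n))^r\ =\ n^r-o(n^r),
\]
where the last equality uses $f(n)=o(n)$: expanding the binomial, each of the finitely many lower-order terms is $O\!\left(n^{r-1}f(n)\right)=o(n^r)$. Moreover $\tw(G_i)\ge 1$ forces each $G_i$ to have an edge, so this decomposition is \emph{non-degenerate}, whence $\ngpun{\beta}(r;n)\ge n^r-o(n^r)$. Combining with $\ngpun{\beta}(r;n)\le\ngpu{\beta}(r;n)\le n^r$ yields $\ngpu{\beta}(r;n)=\ngpun{\beta}(r;n)=n^r-o(n^r)$.

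I do not expect a serious obstacle. The only points requiring care are that the union bound is applied to dependent events (harmless, since $\Pr[\bigcup_i A_i]\le\sum_i\Pr[A_i]$ holds regardless of dependence) and that the error term in ``$\tw(G(n,1/r))=n-o(n)$'' is uniform across the $r$ copies, which it is because they share a marginal distribution; granting that, the binomial expansion of $(n-f(n))^r$ is routine.
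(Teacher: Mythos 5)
Your argument is correct, but it reaches the lower bound by a genuinely different route from the paper. The paper does not try to control each factor individually: it reduces to tree-width, notes $\ngpu{\tw}(r;n)=\ngpun{\tw}(r;n)$ exactly via Observation \ref{rem:prodnondegen} (since $\tw(\overline{K_n})=0$, degenerate decompositions contribute product $0$), takes from Theorem \ref{thmtwupper} only the fact that some non-degenerate decomposition has $\sum_i\tw(G_i)\ge(r-\epsilon)n$, and then converts this sum bound into the product bound $(1-\epsilon)n^r$ using the technical Lemma \ref{lem:mainieq} (given a fixed sum with each term in $[1,n]$, the product is minimized at the extremes, and a sum within $\epsilon n$ of $rn$ forces the product to be at least $(1-\epsilon)n^r$). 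You instead upgrade the random-decomposition argument of Lemma \ref{randomrlem}: since each $G_i$ has marginal distribution $G(n,\tfrac1r)$, a union bound over the $r$ (dependent) parts shows that with high probability \emph{every} $\tw(G_i)$ is $n-o(n)$ simultaneously, and the product bound is then immediate. Your route is more direct and avoids Lemma \ref{lem:mainieq}, but it requires the result of \cite{PS} in its asymptotically-almost-sure form (a single $o(n)$ error function valid with probability tending to $1$), whereas the paper's route needs only the sum estimate already established in Theorem \ref{thmtwupper} and reuses a lemma that is needed again in Section \ref{sNGprodl}; your uniformity and union-bound caveats are exactly the right points to flag, and both are handled correctly. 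One small remark: your squeeze shows both $\ngpu{\beta}(r;n)$ and $\ngpun{\beta}(r;n)$ equal $n^r-o(n^r)$, which is the asymptotic content, but if the first equality in the statement is read as exact you should add the one-line observation that $\beta(\overline{K_n})=0$ makes every degenerate decomposition's product zero, so the two maxima coincide.
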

\begin{proof}
Since $\ppw(G)\ge\pw(G)\ge\la(G)\ge \tw(G)$ for all $G$, it is enough to prove $\ngpu{\tw}(r;n)= n^r-o(n^r)$.

By Observation \ref{rem:prodnondegen}, $\ngpu{\tw}(r;n)=\ngpun{\tw}(r;n)$. Theorem \ref{thmtwupper} shows that $\ngsun{\tw}(r;n)=rn-o(n)$, since when $n$ is large enough such that $\tw(r;n) \geq r n - 0.5 n$, the value of $\tw(r;n)$ can be achieved only by a non-degenerate decomposition (because $\tw(G) \leq n-1)$.
Fix $\epsilon$ with $0<\epsilon<1$.  When $n$ is large enough, there is a non-degenerate $r$-decomposition $G_1,\ldots ,G_r$ such that 
\[\tw(G_1)+\cdots +\tw(G_r)\geq (r-\epsilon)n.\]
Applying Lemma \ref{lem:mainieq} with $\sigma=(r-\epsilon)n$, we get $q=\lf\frac{(r-\epsilon)n-r}{n-1}\rf=\lf r-\frac{\epsilon n}{n-1}\rf=r-1$ for $n$ large enough, and $\rho=(r-\epsilon)n-(r-1)n=(1-\epsilon)n$, so
\[\tw(G_1)\cdots\tw(G_r)\geq (1-\epsilon)n^r.\]
Since $\epsilon>0$ can be taken arbitrarily small, $\ngpun{\tw}(r;n)= n^r-o(n^r)$. 
\end{proof}


\begin{thm}\label{thm:CdVpu} Let $\beta\in \{\xi,\nu,\mu,\h\}$.  For  $r\ge 2$, and any $n\ge 2\sqrt r$,  
\[t^{-r}n^r-o(n^r)\leq \ngpu{\beta}(r;n)\le r^{-\frac{r}{2}}n^r+o(n^r),\]
where $t:=\lc\trt(r)\rc=\lc \sqrt{2  r+\frac{1}{4}}-\frac{1}{2}\rc$.
\end{thm}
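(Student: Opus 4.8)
The plan is to obtain both bounds essentially for free from results already proved. For the upper bound I would combine the sum upper bound of Theorem~\ref{thmrxiupper} with the AM--GM inequality: for any $r$-decomposition $G_1,\dots,G_r$ of $K_n$,
\[\prod_{i=1}^r\beta(G_i)\le r^{-r}\Big(\sum_{i=1}^r\beta(G_i)\Big)^r\le r^{-r}\,\ngsu{\beta}(r;n)^r.\]
Since $\ngsu{\beta}(r;n)\le\sqrt r\,n$ for $\beta\in\{\xi,\nu,\mu\}$ and $\ngsu{\h}(r;n)\le\sqrt r\,n+r$ when $n\ge 2\sqrt r$, in all four cases $\ngsu{\beta}(r;n)=\sqrt r\,n+O(1)$, so $r^{-r}\ngsu{\beta}(r;n)^r=r^{-r}(\sqrt r\,n)^r\big(1+o(1)\big)=r^{-r/2}n^r+o(n^r)$ for fixed $r$. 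Maximizing over decompositions gives $\ngpu{\beta}(r;n)\le r^{-r/2}n^r+o(n^r)$.

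For the lower bound I would reuse the decomposition constructed in the proof of Theorem~\ref{thmretalower}\eqref{thmretalower12}. Recall that when $n=ts$ (with $t=\lceil\trt(r)\rceil$) that $r$-decomposition has $\h(G_i)\ge s$ for $i=1,\dots,t$ and $\h(G_i)\ge s+1$ for $i=t+1,\dots,r$ (the latter because $\h(K_{s,s})=s+1$), so
\[\prod_{i=1}^r\h(G_i)\ge s^t(s+1)^{r-t}\ge s^r=\Big(\frac nt\Big)^r=t^{-r}n^r.\]
To handle $n$ not divisible by $t$, put $q:=\lfloor n/t\rfloor$ and $m:=qt$; the product analogue of Remark~\ref{rem:n.vs.m} (take an optimal decomposition of $K_m$, add the $n-m$ extra vertices to every $G_i$ and distribute the remaining edges arbitrarily --- this cannot decrease any $\h(G_i)$ since $\h$ is minor monotone) gives $\ngpu{\h}(r;n)\ge\ngpu{\h}(r;m)\ge q^r=(n/t-O(1))^r=t^{-r}n^r-o(n^r)$. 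For $\beta\in\{\nu,\mu,\xi\}$ I would apply the very same decomposition and invoke $\h(G)-1\le\nu(G)\le\xi(G)$ and $\h(G)-1\le\mu(G)\le\xi(G)$: for $n$ large enough every $G_i$ in the construction has an edge, $\beta(G_i)\ge s-1$ for $i\le t$ and $\beta(G_i)\ge\h(K_{s,s})-1=s$ for $i>t$, so $\prod_{i}\beta(G_i)\ge(s-1)^ts^{r-t}\ge(s-1)^r$; padding exactly as before (all four parameters are minor monotone) yields $\ngpu{\beta}(r;n)\ge(\lfloor n/t\rfloor-1)^r=t^{-r}n^r-o(n^r)$.

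I do not expect a real obstacle here: the argument is an assembly of Theorems~\ref{thmrxiupper} and~\ref{thmretalower} with the AM--GM inequality and the minor-monotone padding trick. The only points needing a little care are (i) verifying that the padding argument of Remark~\ref{rem:n.vs.m} transfers verbatim to products (it does, with the identical proof, using minor monotonicity rather than any special property of the sum), and (ii) checking that for $n$ large the decomposition used for the lower bound is non-degenerate, so that the inequalities $\h(G)-1\le\beta(G)$ for $\beta\in\{\nu,\mu,\xi\}$ --- which are stated only for graphs with an edge --- apply to every $G_i$. As a consistency check, $t^{-r}n^r=\lceil\trt(r)\rceil^{-r}n^r$ and $r^{-r/2}=(\sqrt r)^{-r}$, matching Theorem~\ref{mainprodupper}\eqref{mainprodupper2}.
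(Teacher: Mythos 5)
Your proposal is correct and follows essentially the same route as the paper: the upper bound is obtained from Theorem~\ref{thmrxiupper} together with the AM--GM inequality, and the lower bound from the same partition into $t=\lc\trt(r)\rc$ parts with the subgraphs $H_i$ and $H_{i,j}$, using $\h(G)-1\le\beta(G)$ for $\beta\in\{\nu,\mu,\xi\}$. The only cosmetic difference is that the paper takes parts of size at least $\lf n/t\rf$ directly instead of first treating $n=ts$ and then padding via the product analogue of Remark~\ref{rem:n.vs.m}, which you justify correctly.
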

\begin{proof}
The upper bound comes immediately from Theorem \ref{thmrxiupper} and the AM-GM inequality.  To see the lower bound, we build a decomposition as follows.  Let $t$ be the minimum integer such that $r\leq \frac{t(t+1)}{2}$, i.e., $t=\lc \trt(r)\rc$.  Partition the vertex set into $t$ parts $V_1,V_2,\ldots ,V_t$ with $|V_i|\geq \lf \frac{n}{t}\rf$ for all $i$.  Consider $H_i$ as the subgraph with all edges in $V_i$, and $H_{i,j}$ as the subgraph with all edges between $V_i$ and $V_j$.  We have $\nu(H_i)\geq \eta(H_i)-1\geq \lf \frac{n}{t}\rf-1$ and $\nu(H_{i,j})\geq\eta(H_{i,j})-1\geq \lf \frac{n}{t}\rf$ (and similarly for $\mu$).  Take $r$ subgraphs out of those $H_i$ and $H_{i,j}$, then merge all remaining edges to one of the subgraphs.  This builds a $r$-decomposition with product at least 
\[\left(\lf \frac{n}{t}\rf-1\right)^r\!.\]
Therefore, $\ngpu{\beta}(r;n)\geq t^{-r}n^r-o(n^r)$. 
\vspace{-2mm}\end{proof}



\section{Nordhaus-Gaddum Product Lower Bounds}
\label{sNGprodl}

For $\beta\in \{\h,\mu,\nu,\xi,\tw,\la,\pw,\ppw\}$ we show that  the growth rate of  $\ngpln{\beta}(r;n)$ is $\Theta(n)$.

Recall that for $\beta\in \{\tw,\la,\pw,\ppw\}$, $\beta(\overline{K_n})=0$, so $\ngpl{\beta}(r;n)=0$; therefore, for these parameters we focus on the non-degenerate case.  
We first prove a technical result that allows us to convert a sum lower bound to a product lower bound for non-degenerate decompositions.

\begin{thm}
\label{thm:sum2prod}
Suppose that  for $r\ge 2 $ and every graph $G$ on $n$ vertices that has an edge,  $1\le \beta(G)\le n$ and $\ngsln{\beta}(r;n)< n+r-1$. Then \vspace{-1mm}
\[ \ngsln{\beta}(r;n)-r+1\le \ngpln{\beta}(r;n)\vspace{-1mm}\]
This formula also applies to $\ngpl{\beta}(r;n)$ if the hypotheses are satisfied for all graphs $G$ (without the restriction of having an edge).  \end{thm}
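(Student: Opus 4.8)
The plan is to reduce the whole statement to Lemma~\ref{lem:mainieq} applied to the numbers $\beta(G_1),\dots,\beta(G_r)$, and then to finish with a short case analysis on the size of their sum. First I would fix an arbitrary non-degenerate $r$-decomposition $G_1,\dots,G_r$ of $K_n$ and set $a_i:=\beta(G_i)$, $\sigma:=\sum_{i=1}^r a_i$, and $\pi:=\prod_{i=1}^r a_i$. Since each $G_i$ has an edge, the hypothesis $1\le\beta(G)\le n$ gives $1\le a_i\le n$, so Lemma~\ref{lem:mainieq} applies and yields $\pi\ge n^q\rho$ with $q=\lfloor(\sigma-r)/(n-1)\rfloor$ and $\rho=\sigma-r-q(n-1)+1$, where $\rho\ge 1$ and (since $\sigma\ge r$) $q\ge 0$. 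Also $\sigma\ge\ngsln{\beta}(r;n)$ by definition of the sum lower bound. It then suffices to prove $\pi\ge\ngsln{\beta}(r;n)-r+1$ for this decomposition, since taking the minimum over all non-degenerate $r$-decompositions gives $\ngpln{\beta}(r;n)\ge\ngsln{\beta}(r;n)-r+1$.

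The case split is on whether $\sigma\le n+r-2$ or $\sigma\ge n+r-1$. In the first case $0\le\sigma-r\le n-2<n-1$, so $q=0$ and $\rho=\sigma-r+1$, whence $\pi\ge\rho=\sigma-r+1\ge\ngsln{\beta}(r;n)-r+1$. In the second case $\sigma-r\ge n-1$ forces $q\ge 1$, so $\pi\ge n^q\rho\ge n$; on the other hand the hypothesis $\ngsln{\beta}(r;n)<n+r-1$ gives $\ngsln{\beta}(r;n)-r+1\le n-1<n\le\pi$. Either way the desired inequality holds.

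For the final sentence of the statement, I expect the same argument to go through verbatim: if the two hypotheses hold for every graph $G$ (not only those with an edge), then $1\le\beta(G_i)\le n$ for every $r$-decomposition, degenerate or not, so Lemma~\ref{lem:mainieq} still applies and the case analysis above yields $\pi\ge\ngsl{\beta}(r;n)-r+1$ for every decomposition; taking the minimum gives $\ngpl{\beta}(r;n)\ge\ngsl{\beta}(r;n)-r+1$.

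I do not anticipate a real obstacle. The only point that requires thought is recognizing that Lemma~\ref{lem:mainieq} is exactly the right tool and that the hypothesis $\ngsln{\beta}(r;n)<n+r-1$ is precisely what is needed to dispose of the ``large $\sigma$'' regime: once $\sigma$ exceeds $n+r-2$ the sum can no longer be concentrated in a single graph with all the others equal to $1$, so the product jumps to at least $n$, which already dominates $\ngsln{\beta}(r;n)-r+1$.
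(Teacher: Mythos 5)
Your proposal is correct and takes essentially the same approach as the paper: both arguments reduce the statement to Lemma~\ref{lem:mainieq} applied to a non-degenerate decomposition. The only difference is cosmetic — the paper avoids your case split on $\sigma$ by first shrinking the values to $1\le a_i\le\beta(G_i)$ with $\sum_i a_i=\ngsln{\beta}(r;n)$ exactly, which by hypothesis is $<n+r-1$ and forces $q=0$ immediately, whereas you apply the lemma directly to $\beta(G_i)$ and dispose of the large-$\sigma$ regime separately.
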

\begin{proof}  Let $G_1,\ldots ,G_r$ be a non-degenerate $r$-decomposition that achieves $\ngpln{\beta}(r;n)$.  Pick $1\leq a_i\leq \beta(G_i)$ such that $\sum_{i=1}^ra_i=\ngsln{\beta}(r;n)<n+r-1$.  Now apply Lemma \ref{lem:mainieq} with $\sigma=\ngsln{\beta}(r;n)$.  Then $q=\left\lfloor\frac{\sigma-r}{n-1}\right\rfloor=0$ and $\rho=\sigma-r+1$.  Therefore,\vspace{-2mm}
\[n^q\rho=\sigma-r+1\leq \prod_{i=1}^ra_i\leq \ngpln{\beta}(r;n).\vspace{-2mm}\]
When $\beta(G)\geq 1$ for all $G$, the same argument works for $\ngpl{\beta}(r;n)$.
\end{proof}

\begin{thm}\label{thm:twprod}
Let $\beta\in \{\tw,\la,\pw,\ppw\}$.  \vspace{-2mm}
\ben[(i)] 
\item\label{421i} For $n\ge 4$,\vspace{-2mm} \[\ngpln{\beta}(2;n)= n-3.\vspace{-2mm}\]
\item For a fixed $r\ge 3$ and $n$ large enough, \vspace{-2mm}
\[\frac{n}{2}-r+1\leq \ngpln{\beta}(r;n)\leq n-2r+1.\vspace{-2mm}\]
\een
\end{thm}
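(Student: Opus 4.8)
The plan is to obtain both parts from two results already in hand: the sum--to--product conversion (Theorem~\ref{thm:sum2prod}) and the explicit non-degenerate decomposition constructed in the proof of Theorem~\ref{oldthmallsumlower}, together with the known value of $\ngsl{\beta}(2;n)$ and the asymptotic lower bound of Corollary~\ref{cor:twblower}.

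\emph{Upper bounds.} I would reuse, verbatim, the non-degenerate $r$-decomposition from the proof of Theorem~\ref{oldthmallsumlower}: for $n\ge 2r$ it splits $K_n$ into $r-1$ paths $P_1,\dots,P_{r-1}$ together with one graph $P_r$ realized as a linear $(n-2r+1)$-tree. Here $\beta(P_i)=1$ for $i\le r-1$ (paths), while $P_r$ contains $K_{n-2r+2}$, so $\tw(P_r)\ge n-2r+1$; combined with $\ppw(P_r)\le n-2r+1$ this pins down $\beta(P_r)=n-2r+1$ for each $\beta\in\{\tw,\la,\pw,\ppw\}$. Hence the $\beta$-product of this decomposition equals $1^{\,r-1}(n-2r+1)=n-2r+1$, which gives $\ngpln{\beta}(r;n)\le n-2r+1$ for all $r\ge 2$; specializing $r=2$ gives $\ngpln{\beta}(2;n)\le n-3$ (this case also appears in \cite{H15IMA}).

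\emph{Lower bounds.} I would check that the hypotheses of Theorem~\ref{thm:sum2prod} hold for these $\beta$: indeed $1\le\beta(G)\le n$ for every $G$ with an edge, and Theorem~\ref{oldthmallsumlower} gives $\ngsln{\beta}(r;n)\le n-r<n+r-1$. Consequently $\ngpln{\beta}(r;n)\ge\ngsln{\beta}(r;n)-r+1\ge\ngsl{\beta}(r;n)-r+1$. For $r=2$ the literature value $\ngsl{\beta}(2;n)=n-2$ (\cite{EGR11, H15IMA, JW12}; note Theorem~\ref{oldthmallsumlower} supplies the matching upper bound $\ngsln{\beta}(2;n)\le n-2$, so in fact $\ngsln{\beta}(2;n)=n-2$) yields $\ngpln{\beta}(2;n)\ge n-3$, and together with the upper bound this proves~(i). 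For $r\ge 3$, Corollary~\ref{cor:twblower} gives $\liminf_{n\to\infty}\frac{\ngsl{\beta}(r;n)}{n}\ge r-\sqrt{r^2-r}>\frac12$, so $\ngsl{\beta}(r;n)>\frac n2$ for $n$ large, whence $\ngpln{\beta}(r;n)>\frac n2-r+1$, which is the lower bound in~(ii).

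I do not expect a genuine obstacle here: the argument is essentially bookkeeping on top of Theorems~\ref{thm:sum2prod} and~\ref{oldthmallsumlower} and Corollary~\ref{cor:twblower}. The two points that require a little care are (a) reading off from the proof of Theorem~\ref{oldthmallsumlower} that its ``large'' block $P_r$ realizes $\beta=n-2r+1$ simultaneously for all four parameters (so the construction's product is \emph{exactly} $n-2r+1$, not merely at most that), and (b) in part~(i), combining the known $\ngsl{\beta}(2;n)=n-2$ with the upper bound of Theorem~\ref{oldthmallsumlower} to get $\ngsln{\beta}(2;n)=n-2$ and thereby legitimately invoke Theorem~\ref{thm:sum2prod}.
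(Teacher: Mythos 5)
Your proof is correct and takes essentially the same route as the paper's: the upper bound comes from the same $(r-1)$-paths-plus-one-large-block decomposition of Theorem~\ref{oldthmallsumlower}, and the lower bound from feeding $\ngsl{\beta}(2;n)=n-2$ and Corollary~\ref{cor:twblower} into the sum-to-product conversion of Theorem~\ref{thm:sum2prod}. Your explicit verification of the hypothesis $\ngsln{\beta}(r;n)<n+r-1$ is a detail the paper leaves implicit but is worth recording.
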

\begin{proof}
By the non-degenerate $r$-decomposition of $K_n$ into $r-1$ paths and one large piece in Theorem \ref{oldthmallsumlower}, 
\[\ngpln{\ppw}(r;n)\leq n-2r+1.\]
Note that $\ngsln{\tw}(r;n)\geq \ngsl{\tw}(r;n)$ by definition.  By \cite{EGR11, JW12}, $\ngsl{\tw}(2;n)\ge n-2$, and for $r\ge 3$ and  $n$ large enough, $\ngsl{\tw}(r;n)\geq \frac{n}{2}$ by Corollary \ref{cor:twblower}.  Consequently, Theorem \ref{thm:sum2prod} implies 
$\ngpln{\tw}(2;n)\geq n-3$, and $\ngpln{\tw}(r;n)\geq \frac{n}{2}-r+1$ for $r\ge 3$ and $n$ large enough.
\end{proof}

Note that  Theorem \ref{thm:twprod}\eqref{421i} was established in \cite{H15IMA} but non-degeneracy was implicitly assumed and should have been stated. Next we consider the Hadwiger number.  Since $\eta(G)=1$ if and only if $G$ has no edges, both the general and  non-degenerate decompositions are of interest, and  the product lower bounds  have  different values in the case $r=2$.

\begin{rem}\label{rem:degenHadprodlower}  
It is known \cite{K89} 
that $\ngpln{\h}(2;n)\geq \left\lceil \frac{3n-5}{2}\right\rceil$.  If one of the two parts has no edge, then the decomposition becomes $K_n$ and $\overline{K_n}$, and the product is $n$.  When $n\geq 4$, $n\le \left\lceil \frac{3n-5}{2}\right\rceil$, so $\ngpl{\h}(2;n)=n$ 
(because the decomposition  $K_n,\overline{K_n}$ also provides an upper bound).   So  $\ngpln{\h}(2;n)> \ngpl{\h}(2;n)$ for $n\ge 5$.  By checking small cases, we see that $\ngpl{\h}(2;n)=n$ for all $n$.
\end{rem}

For a graph $G$ on $n$ vertices, Balogh and Kostochka, building on work of Duchet and  Meyniel \cite{DM82} and Fox \cite{JFox}, showed in \cite{BK11}  that\vspace{-2mm} \[0.513 n\le \h(G)\omega(\overline{G}) .\vspace{-2mm}\] 

\begin{thm}\label{thm:etaprod} 
For all $r\geq 2$ and  $n\ge 1$, \vspace{-2mm}
\[(0.513)^{r-2}n \leq \ngpl{\h}(r;n)\leq n.\vspace{-2mm}\]
\end{thm}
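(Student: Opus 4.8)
The plan is to prove the two inequalities separately. The upper bound $\ngpl{\h}(r;n)\le n$ is the easy direction: take the trivial degenerate $r$-decomposition in which $G_1=K_n$ and $G_2=\cdots=G_r=\overline{K_n}$. Then $\h(G_1)=n$ while $\h(G_i)=1$ for $i\ge 2$, so the product equals $n$, and since $\ngpl{\h}$ is a minimum over all $r$-decompositions, we conclude $\ngpl{\h}(r;n)\le n$. (In fact one expects equality, but that is the content of Conjecture \ref{etapl}; here only the inequality is needed.)

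For the lower bound $\ngpl{\h}(r;n)\ge (0.513)^{r-2}n$, I would argue by induction on $r$, the base case $r=2$ coming from the Balogh--Kostochka bound. Fix any $r$-decomposition $G_1,\dots,G_r$ of $K_n$. The key observation is that $G_1\cup G_2$ (the graph on $[n]$ with edge set $E(G_1)\cup E(G_2)$) has complement $\overline{G_1\cup G_2}$ equal to the graph whose edges are exactly those assigned to $G_3,\dots,G_r$; equivalently, $G_2$ is a spanning subgraph of $\overline{G_1}$ restricted appropriately. More useful: observe that $\overline{G_1}$ contains $G_2,\dots,G_r$ edge-disjointly, so $\overline{G_1} = G_2 \cup \cdots \cup G_r$. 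Now apply Balogh--Kostochka to the graph $H:=G_2\cup\cdots\cup G_r$, using that $\omega(\overline H)=\omega(G_1)\le\h(G_1)$: this gives $0.513\,n\le \h(H)\,\omega(\overline H)\le \h(H)\,\h(G_1)$. The remaining step is to bound $\h(H)$ in terms of $\h(G_2),\dots,\h(G_r)$: since $G_2,\dots,G_r$ is a non-degenerate (after discarding empty parts, an $(r-1)$-or-fewer-part) decomposition of the complete graph on the vertex set of $H$ — but $H$ need not be complete. So instead I would pass to a clique minor realizing $\h(H)$: in a set $S\subseteq[n]$ of $\h(H)$ branch sets, $H[\text{that minor}]$ is complete, meaning the decomposition $G_2,\dots,G_r$ restricted to these branch sets forms a decomposition of $K_{\h(H)}$, so by the inductive hypothesis $\h(G_2)\cdots\h(G_r)\ge \ngpl{\h}(r-1;\h(H))\ge (0.513)^{r-3}\h(H)$. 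Combining, $\h(G_1)\h(G_2)\cdots\h(G_r)\ge \h(G_1)(0.513)^{r-3}\h(H)\ge (0.513)^{r-3}\cdot 0.513\,n = (0.513)^{r-2}n$.

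The main obstacle I anticipate is the technical bookkeeping in the inductive step: making precise that contracting branch sets of a clique minor of $H=G_2\cup\cdots\cup G_r$ yields an honest $(r-1)$-decomposition of a complete graph on $\h(H)$ vertices, so that the minor-monotonicity of $\h$ (each $\h(G_i)$ for $i\ge 2$ does not decrease when we contract the branch sets inside $G_i$) lets us invoke $\ngpl{\h}(r-1;\cdot)$. One must also handle the degenerate possibility that some $G_i$ is edgeless, but since $\h(\overline{K_n})=1$ this only helps (it effectively reduces $r$), and the base case $r=2$ is exactly the cited inequality of Balogh--Kostochka. I would also double-check the edge case $n=1,2,3$ separately, where the asymptotic-looking statement is really just a finite check.
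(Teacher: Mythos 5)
Your upper bound and your overall induction-on-$r$ skeleton match the paper, but the lower-bound argument has two genuine gaps.

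First, the base case. Balogh--Kostochka gives only $\h(G)\h(\Gc)\ge \h(G)\,\omega(\Gc)\ge 0.513\,n$, i.e., $\ngpl{\h}(2;n)\ge 0.513\,n$, whereas the theorem at $r=2$ asserts $\ngpl{\h}(2;n)\ge (0.513)^{0}n=n$. Starting the induction from $0.513\,n$ only yields $(0.513)^{r-1}n$, one factor short of the claimed bound. The paper instead starts from $\ngpl{\h}(2;n)=n$, which comes from Kostochka's theorem $\ngpln{\h}(2;n)\ge \lceil (3n-5)/2\rceil$ \cite{K89} combined with the degenerate-case analysis in Remark \ref{rem:degenHadprodlower}; Balogh--Kostochka is reserved for the inductive step.

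Second, and more seriously, the inductive step. You apply Balogh--Kostochka to $H=\overline{G_1}=G_2\cup\cdots\cup G_r$ and then try to push a clique \emph{minor} of $H$ through the decomposition. The branch sets of that minor are connected in $H$, but need not be connected in any individual $G_i$, since the edges making a branch set connected may be scattered among different parts. Hence the ``quotient'' graphs $G_i'$ obtained by contracting the branch sets are \emph{not} minors of the $G_i$, and $\h(G_i)$ can be strictly smaller than $\h(G_i')$. Concretely: let $H$ have vertices $a,b,c,d,e$ and edges $ab,ac,bd,be,cd,ce,de$, with branch sets $\{a,b\},\{c\},\{d\},\{e\}$ witnessing $\h(H)\ge 4$; decompose $E(H)$ as $G_3=\{ab\}$ and $G_2=$ the remaining six edges. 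Then $G_2'=K_4$, so $\h(G_2')=4$, while $G_2$ itself is $K_4$ minus an edge with a pendant vertex attached, which has treewidth $2$ and so $\h(G_2)=3$. Thus the inequality $\h(G_2)\cdots\h(G_r)\ge \ngpl{\h}\!\left(r-1;\h(H)\right)$ that your step requires is not justified by minor-monotonicity; this is not mere bookkeeping but a breakdown of the reduction. The paper avoids the issue entirely by applying Balogh--Kostochka to $G_1$ rather than to its complement: $\h(G_1)\,\omega(\overline{G_1})\ge 0.513\,n$ produces an actual clique $W$ of $\overline{G_1}$ (an independent set of $G_1$) with $|W|\ge 0.513\,n/\h(G_1)$, and then $G_2[W],\ldots,G_r[W]$ are honest induced subgraphs that exactly decompose the complete graph on $W$, so $\h(G_i)\ge \h(G_i[W])$ and the induction closes with no contraction needed. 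I recommend rewriting your inductive step in that form; your treatment of edgeless parts and your upper bound are fine as they stand.
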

\begin{proof}
The upper bound is  achieved by one complete graph and $r-1$  empty graphs. 

  We prove the lower bound by induction.  For base case $r=2$, we already know $\ngpl{\h}(2;n)=n$.  Assuming $\ngpl{\h}(r-1;n)\geq (0.513)^{r-3}n$, we consider a $r$-decomposition $G_1,\ldots ,G_r$.  Since $\h(G_1)\omega(\overline{G_1})\geq 0.513 n$, there is a clique in $\overline{G_1}$ on the vertex set $W$ with $|W|\geq 0.513\frac{n}{\h(G_1)}$.  Now
\bea
\h(G_1)\cdots \h(G_r) & \geq& \h(G_1)\h(G_2[W])\cdots \h(G_r[W])\\
&\geq &\h(G_1)\cdot (0.513)^{r-3}|W|\geq (0.513)^{r-2}n,
\eea
since $G_2[W],\ldots,G_r[W]$ form an $(r-1)$-decomposition of the clique on $W$.  
\end{proof}


\begin{rem}\label{rem:etapl}
 It is known  that $\ngpl{\chi}(r;n)=n$ for all  $r$ and $n$ \cite[p. 277]{multiNG}. 
On the other hand, Hadwiger's conjecture states that $\h(G)\geq \chi(G)$ for all graph $G$.  Therefore, Hadwiger's conjecture  implies $\ngpl{\h}(r;n)\ge n$, and thus implies Conjecture \ref{etapl}, so any counterexample to Conjecture \ref{etapl} would disprove Hadwiger's conjecture.
\end{rem}

\begin{cor}\label{thm:etaprodnd} 
For all $r\geq 2$ and  $n\ge 2r$,\vspace{-2mm}
\[(0.513)^{r-2}n \leq \ngpln{\h}(r;n)\leq 2^{r-1}(n-2r+2).\]
 \end{cor}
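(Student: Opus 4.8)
The plan is to prove the two inequalities independently: the lower bound is essentially free, and the upper bound comes from the explicit non-degenerate decomposition already built in the proof of Theorem~\ref{oldthmallsumlower}. For the lower bound, note that every non-degenerate $r$-decomposition is in particular an $r$-decomposition, so the minimum of $\h(G_1)\cdots\h(G_r)$ over non-degenerate decompositions is at least the minimum over all decompositions; that is, $\ngpln{\h}(r;n)\ge \ngpl{\h}(r;n)$. Theorem~\ref{thm:etaprod} then gives $\ngpl{\h}(r;n)\ge (0.513)^{r-2}n$, which is exactly the claimed bound. (One should not try to route this through Theorem~\ref{thm:sum2prod}, whose hypothesis $\ngsln{\h}(r;n)<n+r-1$ need not hold.)

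For the upper bound I would reuse, for $n\ge 2r$, the non-degenerate $r$-decomposition $P_1,\dots,P_r$ of $K_n$ from the proof of Theorem~\ref{oldthmallsumlower}: decompose $K_{2r}$ on the vertex set $[2r]$ into $r$ Hamiltonian paths, let $P_1,\dots,P_{r-1}$ be $r-1$ of these paths together with $n-2r$ isolated vertices, and let $P_r$ absorb the remaining Hamiltonian path $(1,2,\dots,2r)$ together with every edge of $K_n$ not inside $[2r]$. Each $P_i$ with $i<r$ is a path, hence a forest with no $K_3$-minor, so $\h(P_i)=2$. The crux is to evaluate $\h(P_r)$: restricted to $[2r]$, $P_r$ is exactly the path $(1,2,\dots,2r)$; restricted to $\{2r+1,\dots,n\}$ it is complete; and all edges between the two parts are present, so $P_r$ is the join $K_{n-2r}+P_{2r}$. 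A short branch-set argument gives $\h(K_m+H)=m+\h(H)$ for any graph $H$ on the other side of the join: in a clique minor, at most $m$ branch sets can meet the $K_m$-side, and the remaining branch sets lie entirely in $H$ and form a clique minor there, so there are at most $\h(H)$ of them. Since $\h(P_{2r})=2$, this yields $\h(P_r)=n-2r+2$, and therefore
\[\h(P_1)\cdots\h(P_r)=2^{r-1}(n-2r+2),\]
so that $\ngpln{\h}(r;n)\le 2^{r-1}(n-2r+2)$.

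The only step requiring genuine care is the evaluation of $\h(P_r)$ — namely recognizing $P_r$ as the join $K_{n-2r}+P_{2r}$ and proving the identity $\h(K_m+H)=m+\h(H)$, which is easy but has not been stated earlier in the paper. (Note a trivial bound $\h(P_r)\le n$ would only give $2^{r-1}n$, which is weaker than the stated $2^{r-1}(n-2r+2)$, so the join computation is needed.) Everything else — checking that each $P_i$ has an edge, so the decomposition is non-degenerate, and that $P_r[[2r]]$ really is the leftover Hamiltonian path because the $r$ paths partition $E(K_{2r})$ — is routine bookkeeping.
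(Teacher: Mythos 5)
Your proposal is correct, and for the lower bound it coincides exactly with the paper's argument ($\ngpln{\h}(r;n)\ge\ngpl{\h}(r;n)$ plus Theorem~\ref{thm:etaprod}). For the upper bound you also use the same decomposition $P_1,\dots,P_r$ from Theorem~\ref{oldthmallsumlower}, but you evaluate $\h(P_r)$ differently: you identify $P_r$ as the join of $K_{n-2r}$ with the path on $[2r]$ and prove the identity $\h(K_m+H)=m+\h(H)$ by a branch-set count, obtaining $\h(P_r)=n-2r+2$ exactly. The paper instead reuses the computation $\ppw(P_r)=n-2r+1$ already done in Theorem~\ref{oldthmallsumlower} together with the general inequality $\h(G)\le\ppw(G)+1$ (which follows from $\h(G)-1\le\nu(G)\le\la(G)\le\pw(G)\le\ppw(G)$), getting $\h(P_r)\le n-2r+2$. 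Both routes yield the stated bound; yours is more self-contained (it does not lean on the cited chain of minor-monotone inequalities) and gives the exact value of $\h(P_r)$ rather than just an upper bound, at the cost of introducing and proving the join lemma, which the paper avoids by recycling machinery it already has. Your side remarks are also sound: the trivial bound $\h(P_r)\le n$ would indeed be too weak, and Theorem~\ref{thm:sum2prod} is not applicable here.
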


\begin{proof}
The lower bound is by Theorem \ref{thm:etaprod} and the fact $\ngpln{\h}(r;n)\geq \ngpl{\h}(r;n)$.
On the other hand, let $P_1,P_2,\ldots ,P_r$ be the $r$-decomposition  in Theorem \ref{thmallsumlower}.  Recall that the Hadwiger number of a path is $2$ whenever it has more than one vertex.  Since $P_i$ is a path for $i=1,\ldots ,r-1$ and $\h(P_r)\leq \ppw(P_r)+1=n-2r+2$, we have $\ngpln{\h}(r;n)\leq 2^{r-1}(n-2r+2)$.
\end{proof}


A peculiarity of the parameters $\xi, \nu$, and $\mu$ is that $\beta(P_n)=\beta(\overline{K_n})=1$ for $\beta\in\{\xi, \nu,\mu\}$ (and $n\ge 2$ in the case of $\mu$). Thus $\ngpl{\beta}$ is not optimized on $\overline{K_n}$ for these parameters, and we  focus on the non-degenerate versions.

\begin{cor}\label{cor:etaprodnd2munuxi} Let   $\beta\in\{\nu,\xi,\mu\}$.  For $r\geq 2$ and $n\ge 2r$,\vspace{-2mm}
 \[\frac n {2^{2r-2}}\leq\ngpln{\beta}(r;n)\leq n-2r+1.\vspace{-2mm}\]
\end{cor}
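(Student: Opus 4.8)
The plan is to obtain both inequalities from facts already established for the Hadwiger number $\h$ together with the relationships $\h(G)-1\le\nu(G)\le\xi(G)\le\ppw(G)$, $\h(G)-1\le\mu(G)\le\xi(G)\le\ppw(G)$ (for graphs with an edge) and the peculiarity that a path on at least two vertices has $\beta$-value $1$ for $\beta\in\{\nu,\xi,\mu\}$.

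For the \emph{upper bound}, I would simply reuse the non-degenerate $r$-decomposition $P_1,\dots,P_r$ of $K_n$ built in Theorem~\ref{oldthmallsumlower}: $P_1,\dots,P_{r-1}$ are paths on $2r$ of the vertices (plus isolated vertices) and $P_r$ is a linear $(n-2r+1)$-tree. Each $P_i$ with $i\le r-1$ is a path on $\ge 2$ vertices, hence $\beta(P_i)=1$, while $\beta(P_r)\le\ppw(P_r)=n-2r+1$. Multiplying, the product over this decomposition is $1^{r-1}(n-2r+1)=n-2r+1$, so $\ngpln{\beta}(r;n)\le n-2r+1$. (The hypothesis $n\ge 2r$ is what makes this decomposition available.)

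For the \emph{lower bound}, let $G_1,\dots,G_r$ be an arbitrary non-degenerate $r$-decomposition of $K_n$. Non-degeneracy forces each $G_i$ to have an edge, so $\h(G_i)\ge 2$ and therefore $\beta(G_i)\ge\h(G_i)-1\ge\tfrac12\h(G_i)$. Multiplying over $i$ and invoking Corollary~\ref{thm:etaprodnd},
\[
\prod_{i=1}^r\beta(G_i)\ \ge\ 2^{-r}\prod_{i=1}^r\h(G_i)\ \ge\ 2^{-r}\,\ngpln{\h}(r;n)\ \ge\ 2^{-r}(0.513)^{r-2}n\ \ge\ \frac{n}{2^{2r-2}},
\]
where the last step uses $0.513>\tfrac12$, so $2^{-r}(0.513)^{r-2}\ge 2^{-r}2^{-(r-2)}=2^{-(2r-2)}$. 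Taking the minimum over all non-degenerate $r$-decompositions yields $\ngpln{\beta}(r;n)\ge n/2^{2r-2}$.

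The argument is short precisely because the substantive work is already done: the bound $\ngpln{\h}(r;n)\ge(0.513)^{r-2}n$ and the path-decomposition of $K_{2r}$ carry the load, and the only new ingredient is the sandwich $\h-1\le\beta\le\ppw$ combined with $\beta(\text{path})=1$. The one point needing care is the step $\h(G_i)-1\ge\tfrac12\h(G_i)$, which is valid only because each $G_i$ contains an edge; this is exactly where non-degeneracy (and, indirectly, $n\ge 2r$) is used.
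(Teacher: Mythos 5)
Your proposal is correct and follows essentially the same route as the paper: the upper bound comes from the path-plus-linear-tree decomposition of Theorem~\ref{oldthmallsumlower} (which the paper reaches via the $\ppw$ bound in Theorem~\ref{thm:twprod} and $\beta\le\ppw$), and the lower bound uses $\beta(G_i)\ge\h(G_i)-1\ge\tfrac12\h(G_i)$ together with $\ngpln{\h}(r;n)\ge(0.513)^{r-2}n\ge n/2^{r-2}$ exactly as in the paper's proof. Your explicit note that non-degeneracy is what licenses $\h(G_i)\ge 2$ is the right point of care.
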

\bpf For $\beta\in\{\nu,\xi,\mu\}$, $ \h(G)-1\le  \beta(G)\le \ppw(G)$ when $G$ has an edge.  
The result then follows from Theorem \ref{thm:twprod} and Corollary \ref{thm:etaprodnd}, since $\beta(G)\geq \h(G)-1\geq \frac{1}{2}\h(G)$ and $ \ngpln{\h}(r;n)\ge (0.513)^{r-2}n \geq \frac n{2^{r-2}}$.
\vspace{-5mm}\epf

\subsection*{Acknowledgments}
This research began while the authors were general members in residence at the Institute for Mathematics and its Applications, and they thank the IMA both for financial support and for a wonderful research environment.  \vspace{-2mm}

\end{document}